\documentclass[11pt]{amsart}
\usepackage[english]{babel}
\usepackage{amsthm,amsmath,graphicx}
\usepackage{amssymb,bbm,esint}
\numberwithin{equation}{section}
\usepackage{appendix}
\usepackage{geometry}

\usepackage[dvipsnames]{xcolor}
\usepackage[colorlinks=true,allcolors=ForestGreen]{hyperref}
\usepackage{amsthm}
\DeclareMathOperator*{\esssup}{ess\,sup}
\DeclareMathOperator*{\essinf}{ess\,inf}
\DeclareMathOperator*{\essosc}{ess\,osc}

\newcommand{\intb}[2][m,r]{\int_{{#1}_{#2}}}
\newcommand{\gaglifract}[1][u]{\frac{|{#1}(x,t)-{#1}(y,t)|^{p}}{|y^{-1}\circ x|^{Q+sp}_{\mathbb{H}^N}}}
\newcommand{\gaglifrac}[1][u]{\frac{|{#1}(x)-{#1}(y)|^{p}}{|y^{-1}\circ x|^{Q+sp}_{\mathbb{H}^N}}}
\newcommand{\tail}[5][u,x,y,r,B]{\int_{\mathbb{H}^N\setminus{{#5}_{#4}}}\frac{{#1}^{p-1}({#3},t)}{|{#3}^{-1}\circ{#2}|^{Q+sp}_{\mathbb{H}^N}}}
\newcommand{\dist}[2][x,y]{|{#2}^{-1}\circ {#1}|_{\mathbb{H}^N}}
\newcommand{\hn}{\mathbb{H}^N}

\def\YYint#1#2#3{{\setbox0=\hbox{$#1{#2#3}{\iint}$}
		\vcenter{\hbox{$#2#3$}}\kern-.51\wd0}}
 
\usepackage[capitalize,nameinlink]{cleveref}
\crefname{section}{Section}{Sections}
\crefname{subsection}{Subsection}{Subsections}
\crefname{subsection}{subsection}{subsections}
\crefname{condition}{Condition}{Conditions}
\crefname{hypothesis}{Hypothesis}{Conditions}
\crefname{assumption}{Assumption}{Assumptions}
\crefname{lemma}{Lemma}{Lemmas}
\crefname{claim}{Claim}{Claims}
\crefname{observation}{Observation}{Observations}
\crefname{example}{Example}{Examples}
\crefname{remark}{Remark}{Remarks}

\crefformat{equation}{\textup{#2(#1)#3}}
\crefrangeformat{equation}{\textup{#3(#1)#4--#5(#2)#6}}
\crefmultiformat{equation}{\textup{#2(#1)#3}}{ and \textup{#2(#1)#3}}
{, \textup{#2(#1)#3}}{, and \textup{#2(#1)#3}}
\crefrangemultiformat{equation}{\textup{#3(#1)#4--#5(#2)#6}}%
{ and \textup{#3(#1)#4--#5(#2)#6}}{, \textup{#3(#1)#4--#5(#2)#6}}%
{, and \textup{#3(#1)#4--#5(#2)#6}}

\Crefformat{equation}{#2Equation~\textup{(#1)}#3}
\Crefrangeformat{equation}{Equations~\textup{#3(#1)#4--#5(#2)#6}}
\Crefmultiformat{equation}{Equations~\textup{#2(#1)#3}}{ and \textup{#2(#1)#3}}
{, \textup{#2(#1)#3}}{, and \textup{#2(#1)#3}}
\Crefrangemultiformat{equation}{Equations~\textup{#3(#1)#4--#5(#2)#6}}%
{ and \textup{#3(#1)#4--#5(#2)#6}}{, \textup{#3(#1)#4--#5(#2)#6}}%
{, and \textup{#3(#1)#4--#5(#2)#6}}

\crefdefaultlabelformat{#2\textup{#1}#3}
\newtheorem{theorem}{Theorem}[section]
\newtheorem{lemma}[theorem]{Lemma}

\newtheorem{proposition}[theorem]{Proposition}

\newtheorem{remark}[theorem]{Remark}        

\numberwithin{equation}{section}

\title[Holder regularity for parabolic equations in the Heisenberg group]{{Optimal in Tail H\"older Estimates for weak solutions of the nonlocal parabolic $\MakeLowercase{p}$-Laplace equations on the Heisenberg Group}}
\begin{document}
\author[Debraj Kar]{Debraj Kar}
\address{Department of Mathematics,  University of Kalyani}
\email{ prometheus.math98@gmail.com, debrajmath22@klyuniv.ac.in}
\begin{abstract}
  We prove the H\"older continuity for weak solutions to parabolic p-Laplace equations on the Heisenberg group. We deduce this result while considering an optimal tail condition.  
\end{abstract}
\maketitle

\setcounter{secnumdepth}{1}
\setcounter{tocdepth}{1}
\tableofcontents
\section{Introduction}
The goal of this paper is to prove H\"older regularity of weak solutions of 
\begin{align}\tag{main}\label{main}
    \partial_tu(x,t)+\mbox{p.v. }\int_{\mathbb{H}^N}K(x,y,t)|u(x,t)-u(y,t)|^{p-2}(u(x,t)-u(y,t))dy=0 ,
\end{align}
on $\Omega_T:=\Omega\times(0,T]$ where $\Omega$ is a bounded open subset in the Heisenberg Group $\mathbb{H}^N$, $T>0$ and
p.v. denotes the Cauchy principal value and the symmetric kernel $K:\mathbb{H}^N\times\mathbb{H}^N\times\mathbb{R}\rightarrow\mathbb{R}$ satisfies
\begin{align}\label{eq1.1}
    \frac{\lambda'}{\dist[x]{y}^{Q+sp}}\leq K(x,y,t)\leq \frac{\Lambda'}{\dist[x]{y}^{Q+sp}}
\end{align}
for some $\lambda',\Lambda'>0$ a.e. and $(x,y,t)\in\mathbb{H}^N\times\mathbb{H}^N\times\mathbb{R}$.
\subsection{Notations, Definitions and Main Result}
\subsubsection{Notations:} 
We introduce some notations which will be used throughout the paper: 
\begin{itemize}
    \item We shall denote $(x,t)$ to be a point in $\hn\times (0,T]$.
    \item We shall denote $B_r(x):= \{y\in \hn: \dist[x]{y}<r\}$ as ball of center $x$ with radius $r$. For simplicity, we use $B_r$ to denote the same instead.  For the generic ball, we use $B$. 
    \item Throughout this paper, we use \texttt{data} to represent the constant terms $s,p,Q,\lambda'$ and $\Lambda'$.
    \item We shall frequently use $f_t$, $\partial_tf$, $\frac{\partial f}{\partial  t}$ to indicate the time derivative of $f$.
    \item Let $z_0:=(x_0,t_0)\in \hn\times (0,T]$. Let us denote {\em forward} and {\em backward} parabolic cylinder as follows
    \begin{align*}
        \begin{cases}
            & z_0+\mathcal{Q}^\oplus_r(\theta)=B_r(x_0)\times (t_0,t_0+\theta r^{sp}]\\
            & z_0+\mathcal{Q}^\ominus_r(\theta)=B_r(x_0)\times (t_0-\theta r^{sp},t_0]
        \end{cases}
    \end{align*}
    \item We denote 
    \begin{align*}
        a_+:=\max\{a,0\},\,a_-:=-\min\{a,0\}\mbox{ for }a\in\mathbb{R}.
    \end{align*}
\end{itemize}
\subsubsection{Definitions:} Here we want to provide some definitions. Let $\Omega$ be an open subset of $\hn$ and $I$ be a subset of $\mathbb{R}$. 
\begin{itemize}
    \item The {\em fractional Sobolev space} on the Heisenberg group, denoted by $W^{s,p}(\Omega)$, is defined by 
    \begin{align*}
        W^{s,p}(\Omega):=\Bigg\{v\in L^p(\Omega): \iint_{\Omega\times\Omega}\frac{|v(x)-v(y)|^p}{\dist[x]{y}^{Q+sp}}\,dx\,dy<\infty\Bigg\}.
    \end{align*}
    We denote $W^{s,p}_0(\Omega)$ as closure of $C_0^\infty(\Omega)$ in $W^{s,p}(\Omega)$.
    \item The {\em parabolic tail space}, denoted by $L^m(I;L^m_\gamma(\Omega))$, on the Heisenberg group is defined by 
    $$L^m(I;L^m_\gamma(\Omega)):=\Bigg\{v\in L^m(I;L^m_\textup{loc}(\Omega)):\iint_{I\times\hn}\frac{|v(x,t)|^m}{1+|x|_{\hn}^{Q+\gamma}}\,dx\,dt<\infty\Bigg\},$$ for some $m,\gamma>0$.
    \item {\em Nonlocal parabolic tail : } For $z_0=(x_0,t_0)\in \hn\times I$ and $r,\tau>0$ with $(t_0-\tau,t_0)\subset I$, the nonlocal parabolic tail is defined by 
    $$\textup{ Tail }(v;z_0,r,\tau):=\Bigg(\fint_{t_0-\tau}^{t_0}r^{sp}\int_{\hn\setminus B_r(x_0)}\frac{|v(x,t)|^{p-1}}{\dist[x]{y}^{Q+sp}}\,dx\,dt\Bigg)^\frac{1}{p-1}$$
    Note that if $v\in L^{p-1}(I;L^{p-1}_{sp}(\hn))$ then it holds that $\textup{Tail }(v;z_0,r,\tau)<\infty$. In the rest of the paper, we mainly use tail on a parabolic cylinder instead, that is $$\textup{Tail}(v;z_0+\mathcal{Q}_r^\ominus(\theta)):=\int_{t_0-\theta r^{sp}}^{t_0}\int_{\hn\setminus B_r(x_0)}\frac{|v(x,t)|^{p-1}}{\dist[x]{y}^{Q+sp}}\,dx\,dt$$
    \item {\em Weak Solution : } Let $\Omega\, (\subset\hn)$ be a bounded open set and $\Omega_T=\Omega\times(0,T]$. A function $u:\hn\times(0,T]\rightarrow\mathbb{R}$ satisfying
    $$u\in L^p_{\textup{loc}}(0,T;W^{s,p}_{\textup{loc}}(\Omega)) \cap L^{p-1}_{\textup{loc}}(0,T;L^{p-1}_{sp}(\hn))\cap C_{\textup{loc}}(0,T;L^2_{\textup{loc}}(\Omega))$$ is called weak sub(super)solution of the (\ref{main}), if for every compact set $K\subset\Omega$, subinterval $[T_1,T_2]\subset (0,T]$ and for all nonnegative testing functions $$\varphi \in W^{1,2}_{\textup{loc}}(0,T;L^2(K))\cap L^p_{\textup{loc}}(0,T;W^{s,p}_0(K))$$ we have
    $$\int_K u\,\varphi\,d\eta\Biggr\vert_{T_1}^{T_2}-\int_{T_1}^{T_2}\int_K u\partial_t\varphi\,d\eta\,dt$$$$\quad+\int_{T_1}^{T_2}\iint_{\hn\times\hn}|u(x,t)-u(y,t)|^{p-2}(u(x,t)-u(y,t))(\varphi(x,t)-\varphi(y,t))\, K(x,y,t)d\xi\,d\eta\,dt\geq (\leq)\,0.$$
\end{itemize}
\subsubsection{Main Result : }
\begin{theorem}\label{T1.1}
    Let $p>1,T>0$ and $s\in(0,1)$. Let $u$ be locally bounded weak solution to (\ref{main}) in $\Omega_T$, where $\Omega$ is a bounded open subset of $\hn$. Moreover, assume that for some $\varepsilon>0$,
    $$\int_{\hn}\frac{|u(x,\cdot)|^{p-1}}{1+|x|_{\hn}^{Q+sp}}dx\in L^{1+\varepsilon}_{\textup{loc}}(0,T].$$
    Then $u$ is locally H\"older continuous in $\Omega_T$. More precisely, there exists some constants $\gamma,\tilde{\gamma},\lambda$ and $\eta\in (0,1)$ depending upon \textup{\texttt{data}}, such that for any $0<r<R<\mathcal{R}$ with $(x_0,t_0)+\mathcal{Q}_R^\ominus(\omega^{2-p})\subset(x_0,t_0)+\mathcal{Q}_\mathcal{R}^\ominus$, following holds
    \begin{align*}
        \essosc_{(x_0,t_0)+\mathcal{Q}^\ominus_r(\omega^{2-p})}\leq \gamma\omega\Big(\frac{r}{R}\Big)^\beta,
    \end{align*}
    for some $\beta:=\min\Big\{\frac{sp\textup{ ln}(1-\eta)}{\textup{ ln}((1-\eta)^{p-2}\lambda^{sp})},\frac{sp\textup{ ln }(1-\eta)}{\textup{ ln}(\bar{\gamma}^{2-p}\lambda^{sp})},\frac{\varepsilon sp}{2(1+\varepsilon)}\Big\}$ depending only on \textup{\texttt{data}}, $\varepsilon$ and 
    \begin{align*}
        \omega=2\esssup_{(x_0,t_0)+\mathcal{Q}_\mathcal{R}^\ominus}|u|+\Bigg(\fint_{t_0-\mathcal{R}^{sp}}^{t_0}\Big(\mathcal{R}^{sp} \intb[\hn\setminus B]{\mathcal{R}}\frac{|u(x,t)|^{p-1}}{|x_0^{-1}\circ x|_{\hn}^{Q+sp}}dx\Big)^{1+\varepsilon}dt\Bigg)^\frac{1}{1+\varepsilon}
    \end{align*}
\end{theorem}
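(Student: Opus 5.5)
We follow the De Giorgi-type intrinsic scaling argument of Liao for the Euclidean nonlocal parabolic $p$-Laplacian, transplanted to $\hn$ via the homogeneous gauge $|\cdot|_{\hn}$ and the Haar measure (homogeneous dimension $Q$). First we establish the energy estimates. We test the equation with $(u-k)_\pm\varphi^p$, where $\varphi$ is a cutoff on $z_0+\mathcal{Q}^\ominus_r(\theta)$, and use a Steklov average in time. This gives the usual Caccioppoli inequality with local terms $\iint (u-k)_\pm^p|\varphi(x)-\varphi(y)|^p K$ and $(u-k)_\pm^2|\partial_t\varphi^p|$. The nonlocal term is controlled by $\operatorname{ess\,sup}_{x\in \operatorname{supp}\varphi}\int_{\hn\setminus B_r}(u-k)_\pm^{p-1}(y,t)\,\dist[x]{y}^{-Q-sp}dy$. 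Only the Heisenberg triangle inequality and the fact that $\varphi$ is Lipschitz with respect to the gauge (so that $\int |\varphi(x)-\varphi(y)|^p\dist[x]{y}^{-Q-sp}dy\lesssim r^{-p}$) are needed. The essential difference from the standard $L^\infty$-in-time tail is that we keep the tail integrated in time. By Hölder's inequality in $t$ with exponent $1+\varepsilon$, over an interval of length $\theta r^{sp}$ the tail contribution is bounded by $(\theta r^{sp})^{\varepsilon/(1+\varepsilon)}$ times the $L^{1+\varepsilon}$ norm appearing in $\omega$. This factor $r^{sp\varepsilon/(1+\varepsilon)}$ is what produces the third exponent $\frac{\varepsilon sp}{2(1+\varepsilon)}$ in $\beta$.

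From the energy estimate and the fractional Sobolev inequality on $\hn$ (assumed available on balls of the gauge), we derive two De Giorgi lemmas. The first is a measure-to-pointwise lemma. If $|\{\pm(\mu^\pm-u)\le \xi\omega\}\cap \mathcal{Q}^\ominus_\rho(\theta)|\le \nu|\mathcal{Q}^\ominus_\rho(\theta)|$, with $\theta$ comparable to $(\xi\omega)^{2-p}$, then $\pm(\mu^\pm-u)\ge \tfrac12\xi\omega$ on $\mathcal{Q}^\ominus_{\rho/2}(\theta)$, provided the tail is at most $\xi\omega$. The second, following Liao, is an expansion-of-positivity lemma. It is proved by a time-propagation estimate from the energy inequality at fixed time, together with a shrinking lemma that uses the nonlocal positive contribution in place of the isoperimetric inequality.

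Next comes the reduction of oscillation. Set $\mu^\pm$ as the sup/inf over $\mathcal{Q}^\ominus_R(\omega^{2-p})$ and split into cases. When $p\ge2$, the time scale $\omega^{2-p}$ is intrinsic, and the alternative ``$u$ close to $\mu^+$ on a large portion'' versus ``close to $\mu^-$'' plus the lemmas give $\operatorname{ess\,osc}_{\mathcal{Q}^\ominus_{\lambda R}((\eta\omega)^{2-p})}u\le (1-\eta)\omega$ whenever the tail is small. When $p<2$, we use expansion of positivity with a factor $\bar\gamma$ in time, which explains the second exponent with $\bar\gamma^{2-p}$. We then iterate with $\omega_{j+1}=\max\{(1-\eta)\omega_j,\ C\,\omega (\lambda^j)^{\varepsilon sp/(2(1+\varepsilon))}\}$, $R_{j+1}=\lambda R_j$, and nested cylinders $\mathcal{Q}^\ominus_{R_j}(\omega_j^{2-p})$. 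The inclusion of consecutive cylinders requires $\lambda^{sp}\le (1-\eta)^{p-2}$ (resp. $\bar\gamma^{2-p}$), which fixes $\lambda$. Converting $\omega_j\le \gamma\omega\lambda^{j\beta}$ into powers of $r/R$ gives exactly the stated min for $\beta$.

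The main obstacle is controlling the tail at each step of the iteration. We must bound the tail of $(u-\mu^\pm_j)_\pm$ outside $B_{R_j}$ by splitting $\hn\setminus B_{R_j}$ into the annuli $B_{R_{i}}\setminus B_{R_{i+1}}$, $i<j$, and the region outside $B_{\mathcal R}$. On each annulus $|u-\mu_j|\le 2\omega_i$, which yields a sum $\sum_i \omega_i^{p-1}(R_j/R_i)^{sp}$. The exterior piece contributes, via the $L^{1+\varepsilon}$ assumption and Hölder's inequality over the shrinking interval of length $\omega_j^{2-p}R_j^{sp}$, a bound of the order $\omega^{p-1}(R_j/\mathcal{R})^{sp\varepsilon/(1+\varepsilon)}$ times powers of $\omega_j$. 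To make the geometric sum absorbable and the time-averaged exterior term at most $(\eta\omega_j)^{p-1}$, we need $\beta<\frac{sp\varepsilon}{2(1+\varepsilon)}$ and $\lambda$ small. This is where the factor $\frac12$ and the dependence on $\varepsilon$ enter, and it is the step to check most carefully, especially for $p<2$, where the time length depends on $\omega_j$.
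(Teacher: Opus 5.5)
There is a genuine gap in your first step, and it is the point on which the optimal-tail result rests. With the standard Caccioppoli inequality you describe, the far field enters the De Giorgi iteration of the measure-to-pointwise lemma as $\xi\omega\int_I |A_n(t)|\,T(t)\,dt$. Here $T(t)=\int_{\hn\setminus B_R}u_+^{p-1}(y,t)\,|y|_{\hn}^{-Q-sp}\,dy$ and $A_n(t)$ is the time slice of the superlevel set. To close the recursion for $Y_n=|A_n|/|\mathcal{Q}_n|$ you need this term to be $\lesssim(\xi\omega)^p\rho^{-sp}|A_n|$. That is immediate if $T\in L^\infty$ in time, but it does not follow from smallness of $\int_I T\,dt$ or of $\|T\|_{L^{1+\varepsilon}(I)}$:
\begin{itemize}
\item bounding $|A_n(t)|\le|B_\rho|$ destroys the measure factor;
\item H\"older in $t$ only yields $|A_n|^{\varepsilon/(1+\varepsilon)}$, which keeps the recursion superlinear only for large $\varepsilon$ (for $sp<Q$, only when $\varepsilon>Q/(sp)$).
\end{itemize}
So the De Giorgi lemma ``provided the tail is at most $\xi\omega$'', with the tail integrated in time, cannot be derived from the energy estimate as you set it up.

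The paper's remedy, following Liao, is a time-dependent truncation level. Lemma~\ref{L2.1} is written for $w=u-g(t)-k$ with $g(t)=\mathfrak{C}\int_{t_0-R^{sp}}^{t}T(\tau)\,d\tau$. The energy inequality then carries the extra term $-\mathfrak{C}\iint T(t)\,w_+\psi^p\nu^2$, which cancels the far-field tail pointwise in time. The iteration only sees the annulus $B_R\setminus B_{\rho_n}$, where $w_+\le\xi\omega$. The far field enters only through the requirement $\sup_t g\le\frac14\xi\omega$, which is exactly smallness of the time-integrated tail (Lemmas~\ref{L3.1} and \ref{L3.2}). Your time-integrated treatment is fine for the propagation and shrinking lemmas (Lemmas~\ref{L3.3}, \ref{L3.4}), since their targets carry no $|A_n|$ factor. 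For the measure-to-pointwise step, however, you need this device, or a two-sequence iteration in the spirit of DiBenedetto's fast geometric convergence lemma, and your plan has neither.

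Your iteration bookkeeping also differs from the paper's, and your account of $\beta$ does not hold up. The paper imposes no decaying floor at each step. It absorbs the annular part of $\textup{Tail}[(u-\mu_j^\pm)_\pm;\tilde{\mathcal{Q}}_j^\ominus]$ into $\frac{1}{2\hat\gamma}\omega_j$ by taking $c$ small, and simply carries the far part along. This gives $\omega_n\le(1-\eta)^n\omega+\gamma\int_{-\omega^{2-p}R^{sp}}^0\int_{\hn\setminus B_{\mathcal{R}}}|u|^{p-1}|x|_{\hn}^{-Q-sp}\,dx\,dt$, a modulus of continuity with an additive tail over a fixed time interval. The $L^{1+\varepsilon}$ hypothesis is used only once, at the end, via H\"older in time after replacing $R$ by $\sqrt{rR}$. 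That square root is where the $\frac12$ in $\frac{\varepsilon sp}{2(1+\varepsilon)}$ comes from.

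Your per-step scheme is a legitimate alternative once the De Giorgi lemma is repaired, but it does not reproduce the stated $\beta$. In your scheme the far tail over $(-\omega_j^{2-p}R_j^{sp},0]$ is of order $\omega_j^{(2-p)\varepsilon/(1+\varepsilon)}(R_j/\mathcal{R})^{sp\varepsilon/(1+\varepsilon)}$ times the tail norm. Nothing there produces a $\frac12$, and for $p>2$ the factor $\omega_j^{(2-p)\varepsilon/(1+\varepsilon)}$ lowers the admissible exponent. So you would not obtain ``exactly the stated min''.

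You also have the two regimes swapped. Expansion of positivity is the tool for $1<p\le2$, and that case gives the $(1-\eta)^{p-2}$ exponent. The $\bar\gamma^{2-p}$ exponent comes from $p>2$, where $\omega_n\le\bar\gamma\,\omega_{n-1}$ is needed to bound $\omega_n^{2-p}R_n^{sp}$ from below. The degenerate case additionally requires the enlarged cylinder $\mathcal{Q}_R^\ominus(\mathcal{L}\theta)$ and the forward-in-time De Giorgi lemma, neither of which appears in your sketch.
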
 
\begin{remark}
    In the paper \cite{KT25_1}, it was proved that the weak solution $u$ to (\ref{main}) is locally bounded for $p>\frac{2Q}{Q+2s}$. For $1<p\leq \frac{2Q}{Q+2s}$, it is expected that the local weak solutions are locally bounded when an additional integrability condition is imposed. In this paper, we already consider locally bounded weak solutions in our result \autoref{T1.1} since the boundary of $\frac{2Q}{Q+2s}$ does not play a role in H\"older continuity.
\end{remark}
\subsection{Background and Novelty} In the last couple of decades, integro-differential operators have drawn great interest, for instance see \cite{CT04_7,Las02_8,Ju05_9}. It  emphasizes the L\'evy process which specifies the emergence of the {\em jump diffusion}. As far as regularity of this problem is concerned, H\"older regularity was proved in the Euclidean elliptic case in the paper \cite{CKP16_10}. For $p=2$, this type of work is carried out by Silvestre \cite{Sil06_11}. For fractional p-laplace operator, one can see the H\"older estimates type of works in \cite{Coz17_6,BLS21_14,APT23_15,CDI25_19} and for other type of equations, see \cite{GK22_16,DP19_17,GL23_18}.

The De Giorgi-Nash-Moser regularity theory of the parabolic counterpart of this equation was addressed in \cite{CCV11_12,FK13_21, APT22_22,Lia24_2,Lia24_5,BK24_20}. When the kernel $K$ in (\ref{eq1.1}) is nonsymmetric, Kassmann and Weidner \cite{KW25_23} prove the H\"older estimate for the parabolic nonlocal equation.

This work is a continuation of our work in \cite{KT25_1} where we proved local boundedness for the nonlocal parabolic $p$-Laplace equation on the Heisenberg group under optimal tail conditions. The main ingredient of our approach is the energy estimate (see Lemma \ref{L2.1}). Unlike in the papers of Ding, Zhang, Zhou \cite{DZZ21}
 and Adimurthi, Prasad, Tewary \cite{APT22_22} but like \cite{Lia24_5}, we abstain to use logarithmic lemma and exponential change of variable method due to the presence of {\em good term} in \ref{eq2.1} and {\em expansion of positivity} (see Section \ref{S4}) respectively. Additionally, to manage the long range behavior of the solution, we rely on the assumption of the stronger {\em tail} condition. Previously, $L^\infty_{\textup{loc}}$ tail condition was required for time variable (see \cite{APT22_22,Lia24_2}). This changed when Kassmann and Weidner \cite{KW24_25} considered the optimal tail condition that is $L^{1+\varepsilon}_{\textup{loc}}$ tail condition in time variable to prove H\"older estimate for nonlocal heat equations. Later, Liao \cite{Lia24_5} proved H\"older for nonlocal parabolic p-Laplace equation under this optimal condition in the Euclidean setting. For the Heisenberg group, Manfredini, Palatucci, Piccinini and Polidoro \cite{MPPP23_3} proved the H\"older continuity for nonlocal elliptic equation. Other recent works on regularity theory of $p$-Laplace type equations on the Heisenberg group are \cite{PP2022,PP2024,FZ2024,FZZ2024,KT25_1,ZN2026}.
 
 In the context of nonlocal parabolic equation on the Heisenberg group, our result is new and we employ the optimal condition on tail to prove our main result. 
 \subsection{Notes on Heisenberg Group} The results in this subsection is based on the book \cite{BLU07_26}. The Heisenberg group is a nilpotent Lie Group.  This is the set $\mathbb{R}^{2N+1}$ equipped with the following group operation
 $$x\circ x'=(\xi+\xi',\eta+\eta',\Upsilon+\Upsilon'+2(\big<\xi',\eta\big>-\big<\xi,\eta'\big>),$$
 where $$x=(z,\Upsilon)=(\xi,\eta,\Upsilon)=(\xi_1,\xi_2,...,\xi_N,\eta_1,\eta_2,...,\eta_N,\Upsilon)$$
 and $$x'=(z',\Upsilon')=(\xi',\eta',\Upsilon')=(\xi'_1,\xi'_2,...,\xi'_N,\eta'_1,\eta'_2,...,\eta'_N,\Upsilon').$$
 A one parameter group of automorphism on $\hn$ is defined as 
 $$\Phi_\lambda(\xi,\eta,\Upsilon)=(\lambda \xi,\lambda \eta,\lambda^2\Upsilon),$$
 so that it has a {\em homogeneous dimension} of $Q=2N+2$. Also a {\em homogeneous norm} $d_0$ such that $d_0:\hn\rightarrow [0,\infty]$ is a function satisfying
 \begin{itemize}
     \item $d_0(\Phi_\lambda(x))=\lambda d_0(x)$ for any $x\in\hn$ and $\lambda>0$.
     \item $d_0(x)=0$ if and only if $x=0$. 
 \end{itemize}
 Define standard {\em homogeneous norm} on $\hn$ by
 $$|x|_{\hn}=(|z|^4+|\Upsilon|^2)^{1/4}.$$
 It is well known that all homogeneous norm on $\hn$ are equivalent (see \cite[Corollary 5.1.5]{BLU07_26}). Additionally, Heisenberg group with any homogeneous norm $d_0$ satisfies a pseudo-triangle inequality as in the following lemma. 
 \begin{lemma}\cite[Proposition 5.1.7]{BLU07_26}
     Consider Heisenberg group $\hn$ with homogeneous norm $d_0$. Then there exists a constant $c>0$ such that for all $x,y\in\hn$, following holds
     \begin{itemize}
         \item $d_0(x\circ y)\geq \frac{1}{c}d_0(x)-d_0(y^{-1}),$
         \item $d_0(x\circ y)\leq c(d_0(x)+d_0(y)),$
         \item $d_0(x\circ y)\geq \frac{1}{c}d_0(x)-cd_0(y).$
     \end{itemize}
 \end{lemma}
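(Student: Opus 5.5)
The plan is to reduce everything to the second inequality (the quasi-triangle inequality) together with an estimate $d_0(y^{-1})\le c\,d_0(y)$. Both will come from a homogeneity-plus-compactness argument. Following \cite{BLU07_26}, I assume $d_0$ is continuous; this is part of the book's definition of a homogeneous norm, though the list above omits it.

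\emph{Step 1 (compactness of the unit ball).} I first show that $\{d_0\le 1\}$ is compact. Let $S:=\{x:|x|_{\hn}=1\}$, the unit sphere of the standard homogeneous norm. It is closed and bounded in $\mathbb{R}^{2N+1}$, hence compact. Since $d_0$ is continuous and vanishes only at $0$, it has a positive minimum $m$ and a finite maximum $M$ on $S$. For $x\neq 0$, write $x=\Phi_{|x|_{\hn}}(\hat x)$ with $\hat x\in S$. Homogeneity then gives
\[
m|x|_{\hn}\le d_0(x)\le M|x|_{\hn}.
\]
It follows that $\{d_0\le1\}$ is closed and contained in $\{|x|_{\hn}\le 1/m\}$, which is bounded, so it is compact. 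The same reasoning shows that $\{(x,y): d_0(x)+d_0(y)=1\}$ is compact in $\hn\times\hn$. I expect this step to be the only delicate point, since continuity is used essentially here.

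\emph{Step 2 (quasi-triangle inequality and the inverse).} The key algebraic fact is that each $\Phi_\lambda$ is a group automorphism, which one checks directly from the group law: the $\Upsilon$-correction $2(\langle\xi',\eta\rangle-\langle\xi,\eta'\rangle)$ scales by $\lambda^2$. Hence
\[
d_0\big(\Phi_\lambda x\circ\Phi_\lambda y\big)=d_0\big(\Phi_\lambda(x\circ y)\big)=\lambda\, d_0(x\circ y).
\]
Also $x^{-1}=(-\xi,-\eta,-\Upsilon)$, so $\Phi_\lambda(x^{-1})=(\Phi_\lambda x)^{-1}$.

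If $d_0(x)+d_0(y)=0$, then $x=y=0$ and there is nothing to prove. Otherwise set $\lambda:=d_0(x)+d_0(y)$ and replace $(x,y)$ by $(\Phi_{1/\lambda}x,\Phi_{1/\lambda}y)$. This reduces the second inequality to bounding the continuous function $(x,y)\mapsto d_0(x\circ y)$ on the compact set $\{d_0(x)+d_0(y)=1\}$. That function attains a finite maximum $c_1$, which gives $d_0(x\circ y)\le c_1(d_0(x)+d_0(y))$.

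Similarly, $x\mapsto d_0(x^{-1})$ is continuous on the compact set $\{d_0=1\}$, so it is bounded there by some $c_2$. Homogeneity then yields $d_0(x^{-1})\le c_2\,d_0(x)$ for all $x$.

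\emph{Step 3 (the two lower bounds).} Write $x=(x\circ y)\circ y^{-1}$ and apply the second inequality:
\[
d_0(x)\le c_1\big(d_0(x\circ y)+d_0(y^{-1})\big).
\]
Rearranging gives $d_0(x\circ y)\ge \frac1{c_1}d_0(x)-d_0(y^{-1})$, which is the first inequality. Inserting $d_0(y^{-1})\le c_2 d_0(y)$ gives $d_0(x\circ y)\ge\frac1{c_1}d_0(x)-c_2 d_0(y)$.

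Finally take $c:=\max\{c_1,c_2,1\}$. With this choice all three inequalities hold simultaneously with the single constant $c$.
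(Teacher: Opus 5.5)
Your proof is correct and follows the standard argument behind the cited \cite[Proposition 5.1.7]{BLU07_26}; the paper itself only cites the result and gives no proof. The argument uses homogeneity plus compactness of $\{d_0(x)+d_0(y)=1\}$ for the quasi-triangle inequality, the identity $x=(x\circ y)\circ y^{-1}$ for the first lower bound, and $d_0(y^{-1})\le c\,d_0(y)$ for the third. You are also right to add continuity of $d_0$: it belongs to the book's definition, the paper's list omits it, and the compactness step genuinely needs it.
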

 \begin{remark}
     In the case of standard homogeneous norm, the constant $c$ above is replaced by $1$. In that case we may assume that $\hn$ is a metric space equipped with the metric $|\cdot^{-1}\circ \cdot|$ (for details see \cite[Example 5.1]{BFS18_27}). Moreover, this metric is symmetric i.e. $|x^{-1}\circ y|=|y^{-1}\circ x|$ for any $x,y\in\hn.$
 \end{remark}
 Also note that the {\em Haar measure} on $\hn$ is the same as the {\em Lebesgue measure} on $\mathbb{R}^{2N+1}$ and satisfies the following the {\em doubling measure} property, that is for any $R>0$ and $x_0\in\hn$, 
 $$|B_{2R}(x_0)|\leq C(Q)|B_R(x_0)|$$
 This properties make $(\hn,|\cdot^{-1}\circ \cdot|,\mathfrak{L}^{2N+1})$ into a $Q$-regular measure metric space with a doubling measure (for more, see \cite[Section 11.3]{HaK00_28} and \cite[Proposition 14.2.9]{HKST15_29}).
\subsection{Auxiliary lemmas}In this subsection, we recall some important lemmas which will be used in the sequel. 
\begin{lemma}\cite[Lemma 2.6]{MPPP23_3}\label{L1.1}
    Let $x_0\in\hn$ and $|\cdot|_{\hn}$ be the standard homogeneous norm  $\hn$. Then for some $\gamma,r>0$,
    \begin{align*}
        \int_{B_r(x_0)}\frac{1}{\dist[x_0]{x}^{Q+\gamma}}dx\leq \frac{C(Q,\gamma)}{r^\gamma}
    \end{align*}
\end{lemma}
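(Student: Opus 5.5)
The plan is a dyadic-shell decomposition of the domain of integration $B_r(x_0)$, using only the two structural facts recorded above. The first is the dilation/translation invariance of Haar (Lebesgue) measure together with the homogeneity $|\Phi_\lambda(x)|_{\hn}=\lambda|x|_{\hn}$. These give $|B_\rho(x_0)|=\rho^Q|B_1(0)|$, so that the left-invariant distance $\dist[x_0]{x}$ behaves, for integration purposes, like a radial variable in dimension $Q$. The second is the doubling property.

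\textbf{Step 1 (shells).} Write
\begin{align*}
B_r(x_0)\setminus\{x_0\}=\bigcup_{k\geq 0}A_k,\qquad A_k:=\{x:\ 2^{-k-1}r\leq \dist[x_0]{x}<2^{-k}r\}.
\end{align*}
On $A_k$ the integrand is comparable to $(2^{-k}r)^{-Q-\gamma}$. Moreover $|A_k|\leq |B_{2^{-k}r}(x_0)|=|B_1|\,(2^{-k}r)^{Q}$, and also $|A_k|\geq(1-2^{-Q})|B_1|(2^{-k}r)^Q$. Hence each shell contributes exactly
\begin{align*}
\int_{A_k}\frac{dx}{\dist[x_0]{x}^{Q+\gamma}}\approx C(Q,\gamma)\,(2^{-k}r)^{-\gamma}=C(Q,\gamma)\,2^{k\gamma}r^{-\gamma}.
\end{align*}

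\textbf{Step 2 (summation).} The inequality asserted in Lemma~\ref{L1.1} would follow by summing these shell bounds over $k\geq 0$ and obtaining a geometric series in $2^{\pm k\gamma}$ with ratio less than $1$, which yields $C(Q,\gamma)r^{-\gamma}$.

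\textbf{Main obstacle.} For the domain $B_r(x_0)$ as stated, with $\gamma>0$, the ratio is $2^{\gamma}>1$, so the series diverges. The two-sided shell estimate in Step 1 shows that this is not a loss from a crude bound: the integral equals $+\infty$, because near $x_0$ the integrand is $\sim\rho^{-Q-\gamma}$ against a volume element $\sim\rho^{Q-1}d\rho$. So the plan cannot be completed for the set literally written.

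The first thing I would do is check the convention in \cite[Lemma 2.6]{MPPP23_3}, from which the statement is quoted. The shell argument closes precisely when the shells run outward, $\{2^{k}r\leq\dist[x_0]{x}<2^{k+1}r\}$ with $k\geq0$, and the lemma is later used only to control tail terms over $\hn\setminus B_r(x_0)$. The sole genuine difficulty is this summation step, and the computation above decides it.
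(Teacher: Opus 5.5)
Your diagnosis is right, and it is the essential point. For every $\gamma>0$ and $r>0$ the integral over $B_r(x_0)$ is $+\infty$. Besides your two-sided shell bound, one can see this from $|\{x:|x^{-1}\circ x_0|_{\hn}<\rho\}|=|B_1|\rho^Q$, which turns the integral into $Q|B_1|\int_0^r\rho^{-1-\gamma}\,d\rho$. So the printed statement is false under any reading of the quantifier ``for some''.

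The paper offers no argument of its own, only the citation \cite[Lemma 2.6]{MPPP23_3}, and the domain is a misprint for $\hn\setminus B_r(x_0)$. Every place Lemma~\ref{L1.1} (or \cite[Lemma 2.6]{MPPP23_3}) is invoked integrates over $\hn\setminus B_\rho$ or $B_R\setminus B_\rho$: the tail term in \eqref{eq3.3}, the tail estimates in the proofs of Lemmas~\ref{L3.3} and~\ref{L3.4}, and the tail bound in the Initial Step (Subsection~\ref{InStep}).

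Do write the corrected version out rather than stopping at ``the argument closes''. Set $A_k:=\{x:2^kr\le|x^{-1}\circ x_0|_{\hn}<2^{k+1}r\}$ for $k\ge0$. Then $|A_k|\le|B_1|(2^{k+1}r)^Q$, and the integrand is at most $(2^kr)^{-Q-\gamma}$ on $A_k$, so
\begin{align*}
\int_{\hn\setminus B_r(x_0)}\frac{dx}{|x^{-1}\circ x_0|_{\hn}^{Q+\gamma}}\le\sum_{k\ge0}2^{Q}|B_1|\,2^{-k\gamma}r^{-\gamma}=\frac{2^{Q}|B_1|}{1-2^{-\gamma}}\,r^{-\gamma},
\end{align*}
where $|B_1|$ depends only on $N=(Q-2)/2$ for the standard norm.

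Your inward computation is useful too. The paper does integrate the kernel over balls, in \eqref{eq3.2} and in the terms containing $|\psi(x)-\psi(y)|^p$ in the proofs of Lemmas~\ref{L3.3} and~\ref{L3.4}. There the exponent is $Q+(s-1)p<Q$, i.e.\ $\gamma=-(1-s)p<0$, after enlarging to a ball centred at the singularity. In that case your inward shells give a geometric series with ratio $2^{\gamma}<1$, and hence the bound $C\rho^{(1-s)p}$ that the paper uses. So one dyadic computation sorts out exactly what is true: balls with exponent below $Q$, and complements with exponent above $Q$. The printed lemma pairs a ball with an exponent above $Q$, which is the one combination that fails.
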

Next lemma is useful for iteration. 
\begin{lemma}\label[Lemma]{L1.3}\cite[Chapter 1, Lemma 4.1]{DiB93_4}
    Let $M,b>1$ and $\kappa,\delta>0$ be given. For every $n\in\mathbb{N}$, we assume that
		\begin{align*}
			Y_{n+1}\leq Mb^n\big(Y_n^{1+\delta}+Z_n^{1+\kappa}Y_h^\delta\big)\,\mbox{ and }\, Z_{n+1}\leq Mb^n\big(Y_n+Z_n^{1+\kappa}\big) .
		\end{align*}
		Moreover, assume that $Y_0+Z_0^{1+\kappa}\leq (2M)^{-\frac{1+\kappa}{\zeta}}b^{-\frac{1+\kappa}{\zeta^2}}$ for $\zeta=\min\{\kappa,\delta\}$. Then we have the following
		\begin{align*}
			\lim_{n\rightarrow\infty}Y_n=\lim_{n\rightarrow\infty}Z_n=0 .\\
		\end{align*}
\end{lemma}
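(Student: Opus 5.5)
We prove the statement by showing that both sequences decay geometrically, with the term $Y_h$ in the hypothesis read as $Y_n$, which is clearly intended. Put $A:=Y_0+Z_0^{1+\kappa}$ and $r:=b^{-\frac{1+\kappa}{\zeta}}\in(0,1)$. We show by induction on $n$ that
\begin{align*}
Y_n\leq A\,r^{n}\quad\mbox{and}\quad Z_n\leq A^{\frac{1}{1+\kappa}}\,r^{\frac{n}{1+\kappa}}\qquad\mbox{for all }n\geq 0 .
\end{align*}
Both quantities on the right tend to zero, so this gives the conclusion. The base case $n=0$ holds because $Y_0\leq A$ and $Z_0^{1+\kappa}\leq A$. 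We also record that $A\leq 1$, since $M,b>1$ make the assumed smallness bound less than one. This lets us lower exponents freely: $A^{\delta}\leq A^{\zeta}$ and $A^{\frac{\kappa}{1+\kappa}}\leq A^{\frac{\zeta}{1+\kappa}}$, because $\zeta=\min\{\kappa,\delta\}$.

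For the inductive step in $Y$, the hypothesis together with the induction assumption gives
$$Y_{n+1}\leq Mb^n\big(A^{1+\delta}r^{n(1+\delta)}+A r^n A^{\delta}r^{n\delta}\big)=2MA^{1+\delta}\,b^n r^{n(1+\delta)}.$$
We therefore need $2MA^{\delta}\,b^n r^{n\delta}\leq r$. The power of $b$ here is $b^n r^{n\delta}=b^{\,n(1-\delta(1+\kappa)/\zeta)}$, which is at most $1$ because $\delta\geq\zeta$. So it suffices that $2MA^{\delta}\leq b^{-\frac{1+\kappa}{\zeta}}$. This follows from the smallness assumption:
$$A^{\delta}\leq A^{\zeta}\leq (2M)^{-(1+\kappa)}b^{-\frac{1+\kappa}{\zeta}}\leq (2M)^{-1}b^{-\frac{1+\kappa}{\zeta}}.$$

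For the inductive step in $Z$, we have $Z_n^{1+\kappa}\leq Ar^n$, hence
$$Z_{n+1}\leq Mb^n(Ar^n+Ar^n)=2MA\,b^nr^n.$$
We need this to be at most $A^{\frac{1}{1+\kappa}}r^{\frac{n+1}{1+\kappa}}$, that is, $2MA^{\frac{\kappa}{1+\kappa}}\,b^n r^{\frac{n\kappa}{1+\kappa}}\leq r^{\frac{1}{1+\kappa}}=b^{-1/\zeta}$. Here $b^n r^{\frac{n\kappa}{1+\kappa}}=b^{n(1-\kappa/\zeta)}\leq 1$, since $\kappa\geq\zeta$. So it suffices that
$$2MA^{\frac{\kappa}{1+\kappa}}\leq 2MA^{\frac{\zeta}{1+\kappa}}\leq 2M(2M)^{-1}b^{-1/\zeta}=b^{-1/\zeta},$$
which is exactly the assumed bound on $A$ raised to the power $\frac{\zeta}{1+\kappa}$. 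This closes the induction.

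The only delicate point is choosing the decay ratio $r$. A naive choice such as $r=b^{-1/\zeta}$ handles the $Y$ recursion but leaves an uncontrolled positive power of $b$ in the $Z$ recursion. The choice $r=b^{-(1+\kappa)/\zeta}$ is what makes both $n$-dependent factors at most $1$ simultaneously. With that choice, the smallness exponents $\frac{1+\kappa}{\zeta}$ and $\frac{1+\kappa}{\zeta^2}$ in the hypothesis are precisely what is needed for the two constant conditions.
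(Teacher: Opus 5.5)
Your induction is correct. With $r=b^{-(1+\kappa)/\zeta}$, the $n$-dependent factors $b^{n(1-\delta(1+\kappa)/\zeta)}$ and $b^{n(1-\kappa/\zeta)}$ are both at most $1$, and the smallness of $A$ raised to the powers $\zeta$ and $\zeta/(1+\kappa)$ closes both steps; you tacitly use $Y_n,Z_n\geq 0$ and take $\mathbb{N}$ to include $0$, and both are implicit in the statement and assumed in the original source. The paper gives no proof of its own and only cites DiBenedetto; your argument is the standard one from that source, an induction showing that $Y_n$ and $Z_n^{1+\kappa}$ decay geometrically at rate $b^{-(1+\kappa)/\zeta}$.
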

Next, we provide a particular form of  Parabolic Sobolev inequality which can be deduced from the proof of \cite[Theorem 1.10]{KT25_1} as well as through \cite[Prop.~A.3]{Lia24_2}. This type of form is useful during the proof of De Giorgi Lemma (Lemma \ref{L3.1}).
\begin{proposition}\label{P1.7} Let $s\in (0,1)$ and
    \begin{align*}
    \kappa_*:=\begin{cases}
        & \frac{Q}{Q-sp},\;\;\;\; sp<Q,\\
        & \;\;\;2, \;\;\;\;\;\;\; sp\geq Q.
    \end{cases}
    \end{align*}
    Let $v\in L^p(I;W^{s,p}(B_R(\xi_0)))\cap L^\infty(I;L^2(B_R(\xi_0)))$ be compactly supported in $B_{(1-d)R}(\xi_0)$, for some $d\in (0,1)$ and a.e. $t\in I$, it holds that
    \begin{align}
        &\left(\int_I\int_{B_R}f(x,t)^{\kappa p}\,dx\,dt\right)\nonumber\\
            &\qquad \leq C\left(R^{sp}\int_I[f(\cdot,t)]^p_{W^{s,p}(B_R(\xi_0))}+\frac{1}{d^{Q+sp}}\|f(\cdot,t)\|^p_{L^p(B_R(\xi_0))}\,dt\right)\left(\sup_{t\in I}\fint_{B_R(\xi_0)}|f(\xi,t)|^2\,d\xi\right)^{\frac{\kappa_*-1}{\kappa_*}}
    \end{align}
    for some constant $C=C(Q,p,s)$ and 
    \begin{align*}
    \kappa:=1+\frac{2(\kappa_*-1)}{p\kappa_*}.
    \end{align*}
\end{proposition}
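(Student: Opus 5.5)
The plan is to prove the estimate pointwise in time, as a spatial Sobolev inequality followed by a Hölder interpolation against the $L^2$ norm, and then integrate over $I$. Throughout, $f$ denotes the function $v$ of the statement. By left-translation and the dilations $\Phi_\lambda$, both sides scale the same way: after replacing $v$ by $v(\xi_0\circ\Phi_R(\cdot),\cdot)$, every factor $R^{sp}$, $|B_R|$ and each average transforms consistently. So I would first reduce to $\xi_0=0$, $R=1$. The constant $d$ is untouched by this normalization, and the $R$-powers in the final display are then read off by undoing the scaling.

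\textbf{Step 1 (extension by zero).} Fix a.e. $t\in I$ and extend $v(\cdot,t)$ by zero outside $B_1$. Then
\begin{align*}
[v(\cdot,t)]^p_{W^{s,p}(\hn)}\leq [v(\cdot,t)]^p_{W^{s,p}(B_1)}+2\int_{B_{1-d}}|v(x,t)|^p\int_{\hn\setminus B_1}\frac{dy}{|y^{-1}\circ x|_{\hn}^{Q+sp}}\,dx .
\end{align*}
Since $|\cdot^{-1}\circ\cdot|$ is a genuine metric, for $x\in B_{1-d}$ and $y\notin B_1$ we have $|y^{-1}\circ x|_{\hn}\geq d$. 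The inner integral is therefore bounded by $\int_{\hn\setminus B_d(x)}|y^{-1}\circ x|_{\hn}^{-Q-sp}\,dy\leq C d^{-sp}$; this is the exterior version of \cref{L1.1}, proved by the same dyadic-annuli argument using $|B_r|=cr^Q$. Because $d^{-sp}\leq d^{-Q-sp}$, this produces the term $d^{-(Q+sp)}\|v(\cdot,t)\|_{L^p}^p$ in the statement.

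\textbf{Step 2 (fractional Sobolev embedding on $\hn$).} For $sp<Q$, I would invoke the fractional Sobolev inequality on $\hn$, which holds on $Q$-regular spaces and is used in the proof of \cite[Theorem 1.10]{KT25_1}:
\begin{align*}
\|w\|^p_{L^{p\kappa_*}(\hn)}\leq C[w]^p_{W^{s,p}(\hn)},\qquad \kappa_*=\frac{Q}{Q-sp}.
\end{align*}
For $sp\geq Q$, choose $s'<s$ with $s'p<Q$ and $Q/(Q-s'p)=2$. Use $[w]_{W^{s',p}(B_1)}\leq C[w]_{W^{s,p}(B_1)}$, which holds on the bounded set $B_1$ since the kernel ratio is $|y^{-1}\circ x|^{(s-s')p}\leq 2^{(s-s')p}$. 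Combining this with Step 1 (whose tail bound only improves for $s'$) gives the embedding with $\kappa_*=2$.

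\textbf{Step 3 (interpolation and time integration).} Write $\kappa p=p+2\frac{\kappa_*-1}{\kappa_*}$ and apply Hölder in $x$ with exponents $\kappa_*$ and $\frac{\kappa_*}{\kappa_*-1}$:
\begin{align*}
\int_{B_1}|v|^{\kappa p}\,dx\leq\Big(\int_{B_1}|v|^{p\kappa_*}\Big)^{\frac{1}{\kappa_*}}\Big(\int_{B_1}|v|^2\Big)^{\frac{\kappa_*-1}{\kappa_*}}.
\end{align*}
The first factor is $\|v\|^p_{L^{p\kappa_*}}$, which Steps 1–2 control. In the second factor, bound the $L^2$ mass by its supremum over $t\in I$. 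Integrating in $t$ then gives the claim, with the $L^2$ sup written as an average after undoing the scaling.

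The main obstacle is the bookkeeping in two places. The first is the case $sp\geq Q$, where the reduction to a smaller $s'$ has to be done uniformly so that constants depend only on $Q,p,s$. The second is checking that the powers of $R$ and $|B_R|$ match after rescaling, which is where averaged versus unaveraged integrals enter.
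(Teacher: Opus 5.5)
Your proof is correct. It follows essentially the standard argument behind the results the paper cites instead of proving the proposition: \cite[Prop.~A.3]{Lia24_2} and the proof of Theorem~1.10 in \cite{KT25_1}. That argument extends by zero, bounds the exterior part of the Gagliardo seminorm by $Cd^{-sp}\le Cd^{-Q-sp}$, applies the fractional Sobolev inequality on $\hn$, and interpolates with H\"older against $\sup_t\int|v|^2$. Your worry about uniformity when $sp\ge Q$ is unnecessary: the choice $s'=Q/(2p)$ depends only on $Q$ and $p$, so the constant depends only on $(Q,p,s)$.
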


\subsection{Plan of the Paper} The paper is divided in several sections. In Section \ref{S2}, we present the energy estimate followed by the Section \ref{S3}, where we discuss some important lemmas namely, De Giorgi lemma (Lemma \ref{L3.1}), De Giorgi in forward time (Lemma \ref{L3.2}), measure theoretical information  forward in time (Lemma \ref{L3.3}) and shrinking lemma (Lemma \ref{L3.4}). Section \ref{S4} starts with expansion of positivity result for $1<p\leq 2$ condition and concludes with the proof of the general modulus of continuity result in the singular case. Section \ref{S5} is devoted for the $p>2$ case. Section \ref{S6} is dedicated to the proof of main result \autoref{T1.1}.
\subsection{Acknowledgments}
The author is financially supported by CSIR under the file number: $09/0106$
$(13571)/2022$-EMR-I. The author is grateful to Vivek Tewary for suggesting this problem. The author also expresses his gratitude to Krea University for providing the necessary funding and hospitality during the author's visit at Krea University, where this work was conceived.
\section{Caccioppoli inequality} \label{S2}
We want to consider the energy estimate in \cite[Lemma 3.1]{KT25_1} of the truncated function at the time-dependent level. After remodification of the estimate, we can have the following estimate
\begin{lemma}\label{L2.1}
Let $(x_0,t_0)\in\Omega_T$ and $l,r,R\in \mathbb{R}^+$ satisfying $0<l<r<R$. Take ball $B_r\equiv B_r(x_0)$ such that $\bar{B_r}\subset\Omega$. Consider non-negative functions $\psi\in C_0^\infty(B_{(r+l)/2}(x_0))$ with $0\leq\psi\leq 1$ in $\mathbb{H}^N$ and $\nu\in C^\infty(\mathbb{R})$ and $\theta_2>0$ satisfying $(t_0-\theta_2,t_0)\subset(t_0-R^{sp},t_0)\subset(0,T)$. If $u$ is a local sub(super)solution to the problem (\ref{main}) then there exist a constant $c=c(Q,p,s,\lambda',\Lambda')$ such that
    \begin{align}\label{eq2.1}
    \int_{t_0-\theta_2}^{t_0}&\iint_{B_r\times B_r}\frac{|(w_\pm\psi\nu^{2/p})(x,t)-(w_\pm\psi\nu^{2/p})(y,t)|^p}{|y^{-1}\circ x|^{Q+sp}_{\mathbb{H}^N}}dxdydt\nonumber\\
    &+\int_{t_0-\theta_2}^{t_0}\int_{B_r}(w_\pm\psi^p\nu^2)(x,t)\Bigg\{\int_{B_r}\frac{w^{p-1}_\mp(y,t)}{|y^{-1}\circ x|^{Q+sp}_{\mathbb{H}^N}}dy\Bigg\}dxdt+\int_{B_r}(w^2_\pm\psi^p\nu^2)(x,t)dx\Bigg|_{t_0-\theta_2}^{t_0}\nonumber\\
    &\quad \leq C\int_{t_0-\theta_2}^{t_0}\iint_{B_r\times B_r}\max\{w_\pm^p(x,t),w^p_\pm(y,t)\}\frac{|\psi(x)-\psi(y)|^p}{|y^{-1}\circ x|^{Q+sp}_{\mathbb{H}^N}}\nu^2(t)\;dxdydt\nonumber\\
    &\qquad +\int_{t_0-\theta_2}^{t_0}\int_{B_r}(w_\pm\psi^p\nu^2)(x,t)\Bigg\{\int_{\mathbb{H}^N\setminus B_r}\frac{w^{p-1}_\pm(y,t)}{|y^{-1}\circ x|^{Q+sp}_{\mathbb{H}^N}}dy\Bigg\}dxdt\\
    &\quad\qquad\mp\mathfrak{C}\int_{t_0-\theta_2}^{t_0}\int_{B_r}\Bigg(\int_{\hn\setminus B_R}\frac{u_\pm^{p-1}(y,t)}{\dist[x]{y}^{Q+sp}}dy\Bigg)(w_\pm\psi^p\nu^2)(x,t)dxdt\nonumber\\
    &\qquad\qquad +\int_{t_0-\theta_2}^{t_0}\int_{B_r}(w_\pm\psi^p\nu^2)(x,t)\partial_t\nu(t)dxdt\nonumber
\end{align}
where $w:=u-g-k$ with level $k\in\mathbb{R}$ and $g(t)=\mathfrak{C}\int_{t_0-R^{sp}}^t\Big(\int_{\hn\setminus B_R}\frac{u_\pm^{p-1}(y,t)}{\dist[x]{y}^{Q+sp}}dy\Big)dt$ is arbitrary.
\end{lemma}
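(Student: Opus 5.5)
The plan is to test the weak formulation of (\ref{main}) with $\varphi=w_\pm\psi^p\nu^2$, where $w=u-g-k$, and to treat the time-dependent shift $g$ as a term that absorbs the far-away part of the nonlocal operator. First I regularize in time (Steklov averages, or mollification in $t$) so that $\varphi$ is admissible; this is legitimate because $u\in C_{\textup{loc}}(0,T;L^2_{\textup{loc}}(\Omega))$. Since $g$ depends only on $t$, we have $u(x,t)-u(y,t)=w(x,t)-w(y,t)$, so the nonlocal term involves only $w$. For the time derivative I write $\partial_t u=\partial_t w+g'(t)$. The part with $\partial_t w$ gives $\frac12\int_{B_r} w_\pm^2\psi^p\nu^2\big|_{t_0-\theta_2}^{t_0}$ minus $\int\!\!\int w_\pm^2\psi^p\nu\partial_t\nu$; the latter is the last term on the right of \eqref{eq2.1}, bounded by $w_\pm$ times a constant after using local boundedness or keeping the form as stated. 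The part with $g'$ produces
\[
\pm\int\!\!\int g'(t)\,w_\pm\psi^p\nu^2
=\pm\,\mathfrak{C}\int\!\!\int\Big(\int_{\hn\setminus B_R}\frac{u_\pm^{p-1}(y,t)}{|y^{-1}\circ x|^{Q+sp}_{\hn}}dy\Big)w_\pm\psi^p\nu^2 ,
\]
which moved to the right-hand side is exactly the $\mp\mathfrak{C}$ term.

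Next I split the double integral over $\hn\times\hn$ into $B_r\times B_r$ and twice $B_r\times(\hn\setminus B_r)$, using the symmetry of $K$ and the fact that $\psi$ is supported in $B_{(r+l)/2}$. On $B_r\times B_r$ I use the standard pointwise algebraic inequality from \cite[Lemma 3.1]{KT25_1}. Applied to $a=w(x),b=w(y)$ with cutoffs $\psi^p\nu^2$, it yields a constant times
\[
|(w_\pm\psi\nu^{2/p})(x)-(w_\pm\psi\nu^{2/p})(y)|^p ,
\]
plus the ``good term'' $w_\pm(x)\,w_\mp^{p-1}(y)\,\psi^p(x)\nu^2$, minus $\max\{w_\pm^p(x),w_\pm^p(y)\}|\psi(x)-\psi(y)|^p\nu^2$. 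The kernel bounds \eqref{eq1.1} let me replace $K$ by $|y^{-1}\circ x|^{-Q-sp}$ with constants depending on $\lambda',\Lambda'$.

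For the tail part, with $x\in B_r$ and $y\notin B_r$, I have
\[
|w(x)-w(y)|^{p-2}(w(x)-w(y))\,w_\pm(x)\ge -\,w_\pm(x)\,w_\pm^{p-1}(y)
\]
in the $+$ case, and analogously in the $-$ case. This gives the tail term with $w_\pm^{p-1}$ on $\hn\setminus B_r$, which is the second term on the right.

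The main obstacle is reconciling this tail term with the $g$-correction. The region $\hn\setminus B_R$ has to be treated separately: there I want to bound $w_\pm^{p-1}(y)$ by $u_\pm^{p-1}(y)$ plus level terms. If the paper intends the right side to keep the full tail over $\hn\setminus B_r$, then the $g$-term simply appears as an extra signed term, and the estimate holds directly. So I would keep both terms as stated, rather than attempting a cancellation. I would also check carefully the sign conventions for sub- versus supersolutions in the $\mp$ term, and the arbitrariness of $\mathfrak{C}$.
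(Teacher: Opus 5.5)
Your proposal is correct and follows essentially the route the paper intends (its proof simply defers to \cite[Lemma 3.1]{KT25_1}): test with $w_\pm\psi^p\nu^2$ after Steklov averaging, let $g'$ produce the $\mp\mathfrak{C}$ term, handle $B_r\times B_r$ with the algebraic inequality that yields the good term, and bound $B_r\times(\hn\setminus B_r)$ below to get the $w_\pm^{p-1}$ tail, keeping both right-hand terms with no cancellation at this stage. Two small corrections are needed. First, $g$ must be built with $|y^{-1}\circ x_0|$ rather than $|y^{-1}\circ x|$ so that it really depends only on $t$ and $u(x,t)-u(y,t)=w(x,t)-w(y,t)$; this is how the paper uses it in (\ref{eq3.1})--(\ref{eq3.3}). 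Second, the time-cutoff term you derive, $\int\!\!\int w_\pm^2\psi^p\nu\,\partial_t\nu$, is the right one and is what the paper actually uses in (\ref{eq3.5}); the stated $w_\pm\psi^p\nu^2\partial_t\nu$ is a misprint, and it cannot be reached from your term ``by local boundedness'', since $w_\pm/\nu$ is unbounded where $\nu\to0$ and $\partial_t\nu$ may change sign, so keep your form.
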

\begin{proof}
    The proof follows similarly to \cite[Lemma 3.1]{KT25_1} with some obvious modifications. 
\end{proof}

\section{Preliminary Results}\label{S3}
In this section, we will discuss some main tools which are important to prove the H\"older estimate. We mainly follow the ideas in \cite{Lia24_5}. In each lemma, tail appears in either-or form which signifies one needs to control the tail to have the required results. We derive all these lemmas by using the estimate (\ref{eq2.1}).

Throughout this section, we use the following notations,
\begin{align*}
    \begin{cases}
        &\mathcal{Q}:=B_R(x_0)\times(T_1,T_2]\subset\Omega_T,\\
        &\mu^+\geq \esssup_\mathcal{Q}u,\,\,\,\,\mu^-\leq\essinf_\mathcal{Q}u,\\
        & \omega\geq\mu^+-\mu^-.
    \end{cases}
\end{align*}

Moreover note that, throughout this paper we use $|B_{mr}|=\mathcal{C}|B_r|$, where $\mathcal{C}$ depends upon $Q,m(>0)$ and the homogeneous norm which we are using on Heisenberg group. \vskip 1mm
This section will start with the De Giorgi Lemma where we consider measure theoretical information to derive a pointwise estimate. 
\subsection{De Giorgi Lemma}\label{De Giorgi}
\begin{lemma}\label{L3.1}
     Set $\theta=\delta(\xi\omega)^{2-p}$ for some $\xi,\delta\in (0,1)$ and for $0<\rho<R/2$, assume $z_0+\mathcal{Q}^\ominus_\rho(\theta)\subset B_R(x_0)\times (T_1,T_2]$. Then for locally bounded, local weak sub(super)solution $u$ to (\ref{main}), there exist $\tilde{\gamma}=\tilde{\gamma}(\textup{\texttt{data}})(>1)$ and $\iota\in(0,1)$ depending on \textup{\texttt{data}} and $\delta$, such that if
     \begin{align*}
         |\{\pm(\mu^\pm-u)\leq \xi\omega\}\cap (z_0+\mathcal{Q}_\rho^\ominus(\theta))|\leq \iota\,|z_0+\mathcal{Q}_\rho^\ominus(\theta)|
     \end{align*}
     then either
     \begin{align*}
         \tilde{\gamma}\textup{Tail }[(u-\mu^\pm)_\pm;\mathcal{Q}]>\xi\omega
     \end{align*}
     or
     \begin{align*}
         \pm(\mu^\pm-u)\geq \frac{1}{4}\xi\omega\,\,\,\mbox{ a.e. in }z_0+\mathcal{Q}^\ominus_{\rho/2}(\theta).
     \end{align*}
\end{lemma}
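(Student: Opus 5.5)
We treat the supersolution case, i.e. the lower sign: the hypothesis is about $\{u-\mu^-\le\xi\omega\}$ and the goal is $u-\mu^-\ge\tfrac14\xi\omega$. The subsolution case is symmetric. We may assume $\tilde\gamma\,\textup{Tail}[(u-\mu^-)_-;\mathcal{Q}]\le\xi\omega$ and must derive the pointwise bound; $\tilde\gamma$ is fixed at the end. The argument is a De Giorgi iteration over shrinking cylinders built from the energy estimate Lemma \ref{L2.1}. Set
\[
\rho_n=\tfrac{\rho}{2}+\tfrac{\rho}{2^{n+1}},\qquad \tilde\rho_n=\tfrac{\rho_n+\rho_{n+1}}{2},\qquad k_n=\tfrac{\xi\omega}{4}+\tfrac{\xi\omega}{2^{n+1}},
\]
with cylinders $\mathcal{Q}_n=z_0+\mathcal{Q}^\ominus_{\rho_n}(\theta)$ and intermediate cylinders $\tilde{\mathcal{Q}}_n$ of radius $\tilde\rho_n$. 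Take cutoffs $\psi_n$ equal to $1$ on $B_{\tilde\rho_n}$, supported in $B_{\rho_n}$, with $|\nabla\psi_n|\lesssim 2^n/\rho$, and time cutoffs $\nu_n$ vanishing at $t_0-\theta\rho_n^{sp}$ with $|\partial_t\nu_n|\lesssim 2^{nsp}/(\theta\rho^{sp})$.

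To absorb the long-range tail, use the time-dependent level built into Lemma \ref{L2.1}. Apply it to $w=(u-\mu^-)-g-k_n$ with negative part, where $g(t)$ is $\mathfrak{C}$ times the time integral of the outer tail of $(u-\mu^-)_-$. Since $g$ is monotone with
\[
|g|\le \mathfrak{C}\,\textup{Tail}[(u-\mu^-)_-;\mathcal{Q}]\le \mathfrak{C}\,\xi\omega/\tilde\gamma,
\]
choosing $\tilde\gamma$ large makes $g$ shift the levels by at most, say, $\xi\omega/2^{n+10}$-type errors. A cleaner alternative is to use $g$ exactly as in the statement and compare levels at the end; this is the route in \cite{Lia24_5}.

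The terms on the right of \eqref{eq2.1} are then estimated as follows.
\begin{itemize}
\item The $\psi$-difference term is bounded by $C2^{np}\rho^{-sp}(\xi\omega)^p|A_n|$, where $A_n=\{w_-(\cdot,\cdot)>0\}\cap\mathcal{Q}_n$, since $w_-\le \xi\omega$.
\item The time term is bounded by $C2^{nsp}(\theta\rho^{sp})^{-1}(\xi\omega)^2|A_n|$, which equals $C2^{nsp}\delta^{-1}\rho^{-sp}(\xi\omega)^p|A_n|$ because $\theta=\delta(\xi\omega)^{2-p}$.
\item For the near-tail term, use $|y^{-1}\circ x|\ge 2^{-n-2}|y^{-1}\circ x_0|$ for $x\in B_{\rho_{n}}$ (support of $\psi_n$) and $y\notin B_{\rho_n}$. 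Split the integral into $B_R\setminus B_{\rho_n}$, where $w_-\le\xi\omega$ and Lemma \ref{L1.1} gives $C2^{n(Q+sp)}\rho^{-sp}(\xi\omega)^{p-1}$, and the complement of $B_R$, which is controlled by the Tail and hence $\le\xi\omega$ times the same factor.
\item The $\mathfrak{C}$-term has a favorable sign or is handled identically.
\end{itemize}
Altogether the right-hand side is at most $C\,b^n\delta^{-1}\rho^{-sp}(\xi\omega)^p|A_n|$.

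Next, apply Proposition \ref{P1.7} to $f=w_-\psi_n\nu_n^{2/p}$ on $\tilde{\mathcal{Q}}_n$ and use Hölder's inequality:
\[
(k_n-k_{n+1})^p|A_{n+1}|\le \iint f^p\le \Big(\iint f^{\kappa p}\Big)^{1/\kappa}|A_n|^{1-1/\kappa}.
\]
The two factors in Proposition \ref{P1.7} are both bounded by the energy, since the sup-in-time $L^2$ term is controlled via $\int w_-^2\psi^p\nu^2$. In normalized form $Y_n=|A_n|/|\mathcal{Q}_n|$, the $\theta$ factors balance exactly because of the choice $\theta=\delta(\xi\omega)^{2-p}$, and one obtains
\[
Y_{n+1}\le C(\delta)\,b^n\,Y_n^{1+\frac{sp}{Q+2}\cdot(\cdots)}
\]
for an explicit positive exponent $\delta_0$ coming from $\kappa$. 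This step is the main obstacle: the powers of $\xi\omega$ and of $\theta$ must cancel for both $p<2$ and $p>2$, and the time-dependent shift $g$ must not spoil the level ordering. Finally, Lemma \ref{L1.3} with $Z_n\equiv0$ gives $Y_n\to0$ provided $Y_0\le\iota:=(2C)^{-1/\delta_0}b^{-1/\delta_0^2}$, which is the hypothesis of the lemma. Therefore $u-\mu^-\ge \xi\omega/4$ a.e. in $z_0+\mathcal{Q}^\ominus_{\rho/2}(\theta)$, after accounting for $g$, whose contribution is absorbed by the choice of $\tilde\gamma$.
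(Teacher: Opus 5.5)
\textbf{Gap: the far-range tail.} Your third bullet bounds the piece over $\hn\setminus B_R$ ``by the Tail'' and folds it into $C\,b^n\delta^{-1}\rho^{-sp}(\xi\omega)^p|A_n|$. This step fails. Write $F(t):=\int_{\hn\setminus B_R}(u-\mu^-)_-^{p-1}(y,t)\,|y^{-1}\circ x_0|_{\hn}^{-Q-sp}\,dy$, and let $A_n(t)$ denote the time slices of $A_n$. Then that piece is of size $\xi\omega\int|A_n(t)|\,F(t)\,dt$. To dominate it by a multiple of $|A_n|=\int|A_n(t)|\,dt$ you would need $\sup_t F(t)\lesssim(\xi\omega)^{p-1}\rho^{-sp}$, i.e.\ an $L^\infty$-in-time tail. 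The hypothesis $\tilde\gamma\,\textup{Tail}\le\xi\omega$ only controls $\int F\,dt$. If $F$ concentrates at times where $|A_n(t)|$ is large, nothing proportional to $|A_n|$ comes out, and the recursion for $Y_n$ is lost.

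Your fourth bullet (``favorable sign or handled identically'') is where the real work has to happen, and as written it is wrong. With $w=u-g-k$ and $g$ increasing, the $\mathfrak{C}$-term in \eqref{eq2.1} enters the $w_-$ inequality with the \emph{unfavorable} sign; this is the $\mp$ there. For a supersolution the moving level must decrease in time, so you need $w=(u-\mu^-)+g-k_n$.

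\textbf{The missing step}, which is how the paper argues after \eqref{eq3.3}, is an exact cancellation. For $x\in B_\rho$, $y\notin B_R$ and $\rho\le R/2$ one has $|y^{-1}\circ x|_{\hn}\ge\frac12|y^{-1}\circ x_0|_{\hn}$, with no loss in $n$. Hence the far-range term is at most $\gamma\int F(t)\int(w\psi^p\nu^2)\,dx\,dt$ with $\gamma=\gamma(\texttt{data})$. Taking $\mathfrak{C}=\gamma$, the $g'$-term cancels it identically for every $t$ and every $n$, so the far tail never enters the recursion.

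The Tail hypothesis is used only afterwards, through $\sup_t|g|\le\mathfrak{C}\,\textup{Tail}\le\mathfrak{C}\xi\omega/\tilde\gamma$. It transfers the measure hypothesis to the shifted level sets and brings the shifted conclusion back to $u$; this is what fixes $\tilde\gamma$ (e.g.\ $\tilde\gamma=4\mathfrak{C}$). With the correct downward shift, the levels must run from $\xi\omega$ to $\frac12\xi\omega$, as in the paper. With your levels ending at $\frac14\xi\omega$ you only get $u-\mu^-\ge\frac14\xi\omega-\sup_t|g|$. Since the shift is the same for all $n$, level ordering is never at stake, and the ``$\xi\omega/2^{n+10}$'' remark is unnecessary (and not achievable uniformly in $n$).

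\textbf{Two smaller repairs.} First, the comparison $|y^{-1}\circ x|\ge c\,2^{-n}|y^{-1}\circ x_0|$ holds only for $x\in\operatorname{supp}\psi_n$ lying strictly inside $B_{\rho_n}$ (the paper uses $B_{\hat\rho_n}$), not for all $x\in B_{\rho_n}$. Second, for $p>2$, $\sup_t\int w_-^2\psi_n^2\nu_n^{4/p}$ is not dominated by $\sup_t\int w_-^2\psi_n^p\nu_n^2$. This is why the paper applies Proposition~\ref{P1.7} with a second cut-off supported where the first one equals $1$.
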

\begin{proof}
    Assume that $(x_0,t_0)=(0,0)$. We will prove this result for the supersolution case. For $n=0,1,2,3,...$, we define $\rho_n,\tilde{\rho}_n,\hat{\rho}_n,\bar{\rho}_n,B_n,\tilde{B}_n,\hat{B}_n,\bar{B}_n,\mathcal{Q}^\ominus_n,\tilde{\mathcal{Q}}^\ominus_n,\hat{\mathcal{Q}}^\ominus_n,\bar{\mathcal{Q}}^\ominus_n $ as in \cite[Lemma 3.1]{Lia24_2} viz.,
    \begin{align*}
        \begin{cases}
            &\rho_n=\frac{\rho}{2}+\frac{\rho}{2^{n+1}},\,\,\tilde{\rho}_n=\frac{\rho_n+\rho_{n+1}}{2},\\
            &\hat{\rho}_n=\frac{\rho_n+\tilde{\rho}_n}{2},\,\,\bar{\rho}_n=\frac{\tilde{\rho}_n+\rho_{n+1}}{2}\\
            & B_n=B_{\rho_n}(0),\,\tilde{B}_n=B_{\tilde{\rho}_n}(0),\,\hat{B}_n=B_{\hat{\rho}_n}(0),\,\bar{B}_n=B_{\bar{\rho}_n}(0),\\
            &\mathcal{Q}_n^\ominus=B_n\times (-\theta\rho_n^{sp},0],\,\,\tilde{\mathcal{Q}}_n^\ominus=\tilde{B}_n\times (-\theta\tilde{\rho}_n^{sp},0],\\
            & \hat{\mathcal{Q}}_n^\ominus=B_n\times (-\theta\hat{\rho}_n^{sp},0],\,\,\bar{\mathcal{Q}}_n^\ominus=\bar{B}_n\times (-\theta\tilde{\rho}_n^{sp},0].
        \end{cases}
    \end{align*}
    It is obvious that
    \begin{align*}
        \mathcal{Q}^\ominus_{n+1}\subset\bar{\mathcal{Q}}_n^\ominus\subset\tilde{\mathcal{Q}}_n^\ominus\subset\hat{\mathcal{Q}}_n^\ominus\subset\mathcal{Q}^\ominus_n.
    \end{align*}
    Use (\ref{eq2.1}) with domain $B_n$ and $\mathcal{Q}^\ominus_n$ to $(u-g-k_n)_+$, with the level $g(t)+k_n$ where
    \begin{align*}
        k_n:=\mu^+-\xi_n\omega,\;\;\;\;\;\mbox{ where } \xi_n=\frac{\xi}{2}+\frac{\xi}{2^{n+1}}
    \end{align*}
    Consider the following with an arbitrary positive constant $\mathfrak{C}$ (to be chosen later),
    \begin{align*}
        w_+(x,t):=(u-g-k_n)_+,\;\;\;\;\;\;\;g(t)=\mathfrak{C}\int_{{-R}^{-sp}}^t\int_{\mathbb{H}^N\setminus B_R}\frac{|u_+(y,\tau)|^{p-1}}{|y^{-1}|_{\hn}^{Q+sp}}dyd\tau
    \end{align*}
    Define two cut-off functions $\psi$ and $\nu$ in $B_n$ and $(-\theta\rho_n^{sp},0)$ respectively,
    \begin{align*}
        \psi=\begin{cases}
            & 1\;\;\;\mbox{in } \tilde{B}_n\\
            & 0\;\;\;\mbox{in } \hat{B}_n^c
        \end{cases}
       \;\;\;\;\;\;\;\;\;\;\;\;\; \mbox{ such that } |\nabla_{\hn}\psi|\leq \gamma\frac{2^n}{\rho}
    \end{align*}
    and
    \begin{align*}
        \nu=\begin{cases}
            & 1\;\;\;\mbox{in } (-\theta\tilde{\rho}^{sp},0)\\
            & 0\;\;\;\mbox{in } (-\theta\hat{\rho}_n^{sp},0)^c
        \end{cases}
        \;\;\;\; \mbox{ such that }|\partial_t\nu|\leq \gamma\frac{2^{spn}}{\theta\rho^{sp}}
    \end{align*}
    Considering all these entities, (\ref{eq2.1}) implies (removing the {\em good term} as it is non-negative)
    \begin{align}\label{eq3.1}
        \esssup_{-\theta\tilde{\rho}_n^{sp}<t<0}&\intb[\tilde{B}]{n}w^2_+(x,t)dx+\int_{-\theta\tilde{\rho}_n^{sp}}^0\intb[\tilde{B}]{n}\intb[\tilde{B}]{n}\gaglifract[w_+]dxdydt\nonumber\\
        &\leq \gamma \int_{-\theta\rho_n^{sp}}^0\intb[B]{n}\intb[B]{n}\max\{w^p_+(x,t),w_+^p(y,t)\}\gaglifrac[\psi]\nu^2(t)dxdydt\nonumber\\
        &\quad+\gamma\int_{-\theta\rho_n^{sp}}^0\intb[B]{n}(w_+\psi^p\nu^2)(x,t)\Bigg\{\tail[w_+]{x}{y}{n}{B}dy\Bigg\}dxdt\\
        &\qquad-\mathfrak{C}\int_{-\theta\rho_n^{sp}}^0\intb[B]{n}g'(t)(w_+\psi^p\nu^2)(x,t)dxdt+\int_{-\theta\rho_n^{sp}}^0\intb[B]{n}(w_+\psi^p\nu^2)(x,t)\partial_t\nu(t)dxdt\nonumber
    \end{align}
We make the following estimation of the terms on the right-hand side of (\ref{eq3.1}) successively. In the case of the first term,
\begin{align}\label{eq3.2}
    \int_{-\theta\rho_n^{sp}}^0\intb[B]{n}\intb[B]{n}&\max\{w^p_+(x,t),w_+^p(y,t)\}\gaglifrac[\psi]\nu^2(t)dxdydt\nonumber\\
    &\leq \frac{2^{np}}{\rho^p}.2(\xi\omega)^p\int_{-\theta\rho_n^{sp}}^0\intb[B]{n}\intb[B]{n}\frac{\chi_{\{u(x,t)-g(t)>k_n\}}}{|y^{-1}\circ x|^{Q+(s-1)p}_{\mathbb{H}^N}}dxdydt\nonumber\\
    & = \frac{2^{np}}{\rho^p}.2(\xi\omega)^p\int_{-\theta\rho_n^{sp}}^0\intb[B]{n}\chi_{\{u(x,t)-g(t)>k_n\}}dx\Bigg[\intb[B]{n}\frac{1}{|y^{-1}\circ x|^{Q+(s-1)p}_{\mathbb{H}^N}}dy\Bigg]dt\nonumber\\
    &\leq \gamma\frac{2^{np}}{\rho^{sp}}(\xi\omega)^p|\mathcal{A}_n|
\end{align}
In the last line, we use the fact that $\mathcal{A}_n:=\{(x,t)\in \mathcal{Q}^\ominus_n: u(x,t)-g(t)> k_n\}$. \\
Prior to the estimations of the second and third terms, we make the following observations,\\
$\bullet$ If $|y|>\rho_n$ and $|x|<\hat{\rho}_n$, then,
\begin{align*}
    \frac{|y^{-1}\circ x|}{|y|}\geq 1-\frac{\hat{\rho}_n}{\rho_n}&=\frac{1}{4}\Big(\frac{\rho_n-\rho_{n+1}}{\rho_n}\Big)\\
    &\geq \frac{1}{2^{n+4}}
\end{align*}
$\bullet$ When $|y|>R$ and $|x|\leq \rho$, then
\begin{align*}
    \frac{|y^{-1}\circ x|}{|y|}\geq \frac{1}{2},\;\;\;\;\;\;\mbox{provided }\rho\leq R/2
\end{align*}
Considering all the facts along with the fact that $u\geq 0$ a.e.  in $\mathcal{Q}$, we have in the second term as follows
\begin{align}\label{eq3.3}
    \int_{-\theta\rho_n^sp}^0&\intb[B]{n}(w_+\psi^p\nu^2)(x,t)\Bigg\{\tail[w_+]{x}{y}{n}{B}dy\Bigg\}\nonumber\\&= \int_{-\theta\rho_n^sp}^0\intb[B]{n}(w_+\psi^p\nu^2)(x,t)\Bigg\{\tail[w_+]{x}{y}{R}{B}dy+\int_{B_R\setminus B_n}\frac{w_+^{p-1}}{{\dist[x]{y}}^{Q+sp}}\Bigg\}\nonumber\\
    &\leq \gamma 2^{(Q+sp)n} \int_{-\theta\rho^{sp}}^0\intb[B]{n}(w_+\psi^p\nu^2)(x,t)dx\Bigg\{\int_{B_R\setminus B_n}\frac{w_+^{p-1}}{|y|_{\mathbb{H}^N}^{Q+sp}}dy\Bigg\}dt \nonumber\\
    &\quad +\gamma \int_{-\theta\rho^{sp}}^0\intb[B]{n}(w_+\psi^p\nu^2)(x,t)dx\Bigg\{\int_{\mathbb{H}^N\setminus B_R}\frac{w_+^{p-1}}{|y|_{\mathbb{H}^N}^{Q+sp}}dy\Bigg\}dt\nonumber\\
    &\leq \gamma 2^{(Q+sp)n}\frac{(\xi\omega)^{p-1}}{\rho^{sp}}\int_{-\theta\rho^{sp}}^0\intb[B]{n}(w_+\psi^p\nu^2)(x,t)dxdt\nonumber\\
    &\quad+\gamma\int_{-\theta\rho^{sp}}^0\intb[B]{n}(w_+\psi^p\nu^2)(x,t)dxdt\Bigg\{\int_{\mathbb{H}^N\setminus B_R}\frac{u_+^{p-1}}{|y|_{\mathbb{H}^N}^{Q+sp}}dy\Bigg\}
\end{align}
in this series of calculations, we use the Lemma \ref{L1.1} in first integral of last step. Changing the arbitrary constant $\gamma$ to $\mathfrak{C}$ in the last integral of the last step, then it will cancel out with the third term of (\ref{eq3.1}. Then the rest of (\ref{eq3.3}) will be dominated by 
\begin{align}\label{eq3.4}
    \gamma 2^{(Q+sp)n}\frac{(\xi \omega)^p}{\rho^{sp}}|\mathcal{A}_n|
\end{align}
The last term of (\ref{eq3.1}) is standard, namely
\begin{align}\label{eq3.5}
    \int_{-\theta\rho_n^{sp}}^0\intb[B]{n}(w_+\psi^p\nu^2)(x,t)\partial_t\nu(t)dxdt\leq \frac{2^{spn}}{\theta\rho^{sp}}(\xi\omega)^2|\mathcal{A}_n|
\end{align}
Hence, the energy estimate (\ref{eq3.1}) can be rewritten with the help of the estimations (\ref{eq3.2}), (\ref{eq3.4}) and (\ref{eq3.5})
\begin{align*}
    \esssup_{-\theta\tilde{\rho}_n^{sp}<t<0}&\intb[\tilde{B}]{n}w^2_+(x,t)dx+\int_{-\theta\tilde{\rho}_n^{sp}}^0\intb[\tilde{B}]{n}\intb[\tilde{B}]{n}\gaglifract[w_+]dxdydt\leq \gamma 2^{(Q+sp)n}\frac{\xi\omega}{\delta\rho^{sp}}|\mathcal{A}_n|
\end{align*}
Now set $0\leq \psi\leq 1$  and $0\leq\nu\leq 1$ to be a cut-off function on $\tilde{B}_n$ and $(-\theta\tilde{\rho}_n^{sp},0)$ such that
\begin{align*}
    \psi=\begin{cases}
        & 1\;\;\;\;\mbox{ on } B_{n+1}\\
        & 0\;\;\;\;\mbox{ on } \bar{B}_n^c
    \end{cases}
    \;\;\;\;\; \mbox{such that } |\nabla_{\hn}\psi|\leq \gamma\frac{2^n}{\rho}.
\end{align*}
and 
\begin{align*}
    \nu=\begin{cases}
        & 1\;\;\;\;\mbox{ on } (-\theta\rho_{n+1}^{sp},0)\\
        & 0\;\;\;\;\mbox{ on } (-\theta\bar{\rho}_n^{sp},0)^c
    \end{cases}
    \;\;\;\;\; \mbox{ such that } |\nu_t|\leq\gamma\frac{2^{spn}}{\theta\rho^{sp}}.
\end{align*}
Consequently, using H\"older inequality and Sobolev embedding, we have the following
\begin{align*}
    \frac{\xi\omega}{2^{n+2}}|\mathcal{A}_{n+1}|&\leq \int_{-\theta\tilde{\rho}_n^{sp}}^0\int_{\tilde{B}_n}(w_+\psi)(x,t)dxdt\\
    &\leq \Bigg[\int_{-\theta\tilde{\rho}_n^{sp}}^0\intb[\tilde{B}]{n}(w_+\psi)^{\kappa p}dxdt\Bigg]^\frac{1}{\kappa p}|\mathcal{A}_n|^{1-\frac{1}{\kappa p}}\\
    & \leq \gamma\Bigg[\rho^{sp}\int_{-\theta\tilde{\rho}_n^{sp}}^0\intb[\tilde{B}]{n}\intb[\tilde{B}]{n}\gaglifract[w_+\psi]dxdydt+2^{(Q+sp)n}\int_{-\theta\tilde{\rho}_n^{sp}}^0\intb[\tilde{B}]{n}(w_+\psi)^pdxdt\Bigg]^\frac{1}{\kappa p}\\
    & \quad \times \Bigg[\esssup_{-\theta\tilde{\rho}_n^{sp}<t<0}\fint(w_+\psi)^2dx\Bigg]^\frac{\kappa_*-1}{\kappa_*\kappa p}|\mathcal{A}_n|^{1-\frac{1}{\kappa p}}\\
    & \leq \gamma\Bigg[\rho^{sp}\int_{-\theta\tilde{\rho}_n^{sp}}^0\intb[\tilde{B}]{n}\intb[\tilde{B}]{n}\gaglifract[w_+]dxdydt+2^{np}\int_{-\theta\tilde{\rho}_n^{sp}}^0\intb[\tilde{B}]{n}w^p_+(x,t)dxdt\\
    &\quad + 2^{(Q+sp)n}\int_{-\theta\tilde{\rho}_n^{sp}}^0\intb[\tilde{B}]{n}(w_+\psi)^pdxdt\Bigg]^\frac{1}{\kappa p}\times \Bigg[\esssup_{-\theta\tilde{\rho}_n^{sp}<t<0}\fint(w_+\psi)^2dx\Bigg]^\frac{\kappa_*-1}{\kappa_*\kappa p}|\mathcal{A}_n|^{1-\frac{1}{\kappa p}}\\
    &\leq \gamma b^n\delta^{-\big[\frac{1}{\kappa}+\frac{2\kappa_*-1}{\kappa_*\kappa p}\big]}\rho^{-(Q+sp)\frac{\kappa_*-1}{\kappa_*\kappa p}}(\xi\omega)^\frac{2\kappa_*-1}{\kappa_*\kappa}|\mathcal{A}_n|^{1+\frac{\kappa_*-1}{\kappa_*\kappa p}}
\end{align*}
for some $b=b(Q,p)$. In this sequence of calculations, the third inequality comes from Proposition \ref{P1.7} with 
\begin{align*}
    \kappa_*:=\begin{cases}
        & \frac{Q}{Q-sp},\;\;\;\; sp<Q\\
        & \;\;\;2, \;\;\;\;\;\;\; sp\geq Q
    \end{cases}
\end{align*}
and
\begin{align*}
    \kappa:=1+\frac{2(\kappa_*-1)}{p\kappa_*},\;\;d=\frac{1}{2^{n+4}}.
\end{align*}
To obtain the second-to-last line, we use result of the following calculation viz,
\begin{align*}
    &\rho^{sp}\int_{-\theta\tilde{\rho}_n^{sp}}^0\intb[\tilde{B}]{n}\intb[\tilde{B}]{n}\gaglifract[w_+\psi]dxdydt\\
    &\qquad \leq C\rho^{sp}\int_{-\theta\tilde{\rho}_n^{sp}}^0\intb[\tilde{B}]{n}\intb[\tilde{B}]{n}\gaglifract[w_+]dxdydt\\
    &\qquad\quad + C\rho^{sp}\int_{-\theta\tilde{\rho}_n^{sp}}^0\intb[\tilde{B}]{n}\intb[\tilde{B}]{n}w^p_+(y,t)\frac{|\psi(x)-\psi(y)|^p}{\dist[x]{y}^{Q+sp}}dxdydt\\
    &\qquad \leq C\rho^{sp}\int_{-\theta\tilde{\rho}_n^{sp}}^0\intb[\tilde{B}]{n}\intb[\tilde{B}]{n}\gaglifract[w_+]dxdydt\\
    &\qquad\quad+\gamma 2^{np}\iint_{\tilde{\mathcal{Q}}^\ominus_n}w_+^p(y,t)dydt,
\end{align*}
 where $C=C(p)$ and last inequality is obtained by employing energy estimate (\ref{eq3.5}). Now this estimate leads to a recursive inequality by $Y_n=\frac{|\mathcal{A}_n|}{|Q_n|}$, viz., there is some constant $\gamma$ such that the following holds:
\begin{align*}
    Y_{n+1}\leq \gamma (2b)^n\delta^{-\big[\frac{1}{\kappa}+\frac{2\kappa_*-1}{\kappa_*\kappa p}\big]}Y_n^{1+\frac{\kappa_*-1}{\kappa_*\kappa p}}.
\end{align*}
Hence, by the convergence result Lemma \ref{L1.3}, we have a constant $\iota=\iota(\textup{\texttt{data}})$ such that if
\begin{align*}
    |\{u-g(t)\geq -\xi\omega+\mu^+\}\cap \mathcal{Q}^\ominus_\rho(\theta)|\leq \iota|\mathcal{Q}^\ominus_\rho(\theta)|
\end{align*}
then
\begin{align*}
    u(x,t)-g(t)\leq -\frac{1}{2}\xi\omega + \mu^+\;\;\;\;\;\mbox{ a.e. in } \mathcal{Q}^\ominus_{\frac{\rho}{2}}(\theta) 
\end{align*}
Consequently, if $\sup _t g(t)\leq \frac{1}{4}\xi\omega$, redefining $\mathfrak{4C}$ as $\mathfrak{C}$ and if
\begin{align*}
    |\{u\geq -\xi\omega+\mu^+\}\cap \mathcal{Q}^\ominus_\rho(\theta)|\leq \iota|\mathcal{Q}^\ominus_\rho(\theta)|
\end{align*}
then
\begin{align*}
    u\leq -\frac{1}{4}\xi\omega+\mu^+\;\;\;\;\;\mbox{ a.e. in } \mathcal{Q}^\ominus_{\frac{\rho}{2}}(\theta)
\end{align*}
\end{proof}
Next, we prove a forward-in-time version of De Giorgi lemma but considering pointwise initial data unlike lemma \ref{L3.1} where we have used measure-theoretical data. 
\subsection{De Giorgi Lemma : Forward in Time}
\begin{lemma}\label{L3.2}
    Let $\xi\in (0,1)$ and $u$ be a locally bounded weak sub(super)solution of the equation (\ref{main}) in $\Omega_T$. There exist $\tilde{\gamma}(>1)$ and $\iota_0\in(0,1)$, depending only on the \textup{\texttt{data}} and independent of $\xi$, such that $B_{\rho/2}(x_0)\times(t_0,t_0+\iota_0(\xi\omega)^{2-p}\rho^{sp}]\subset\mathcal{Q}$ and if 
    $$\pm(\mu^\pm-u(\cdot,t_0))\geq \xi\omega\;\;\;\;\;\mbox{ a.e. in } B_\rho(x_0),$$
    then either $$\tilde{\gamma}\;\textup{Tail }[(u-\mu^\pm)_\pm;\mathcal{Q}]>\xi\omega$$
    or
    $$\pm(\mu^\pm-u)\geq \frac{1}{4}\xi\omega\;\;\;\;\;\mbox{ a.e. in }B_{\rho/2}(x_0)\times(t_0,t_0+\iota_0(\xi\omega)^{2-p}\rho^{sp}].$$
\end{lemma}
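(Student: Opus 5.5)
We argue as in Lemma \ref{L3.1}, running a De Giorgi iteration now with shrinking balls but a \emph{fixed} time interval starting at $t_0$. We only treat the supersolution case; the subsolution case is symmetric. Translate so that $(x_0,t_0)=(0,0)$. Let $\rho_n=\frac{\rho}{2}+\frac{\rho}{2^{n+1}}$, with intermediate radii $\tilde\rho_n,\hat\rho_n$ and balls $B_n,\tilde B_n,\hat B_n$ as before. Set $\mathcal{Q}_n:=B_n\times(0,\iota_0(\xi\omega)^{2-p}\rho^{sp}]$, levels $k_n=\mu^-+\xi_n\omega$ with $\xi_n=\frac{\xi}{4}+\frac{\xi}{2^{n+2}}$, and $w_-=(u-g-k_n)_-$. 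Here $g$ is the tail-compensating function of Lemma \ref{L2.1}, integrated from $t_0$, with $u_-$ in place of $u_+$.

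The key point is the choice of time cut-off in (\ref{eq2.1}): we take $\nu\equiv1$ and only the spatial cut-off $\psi$, supported in $\hat B_n$ and equal to $1$ on $\tilde B_n$. Since $u(\cdot,0)\ge\mu^-+\xi\omega$ a.e. in $B_\rho$ and $g(0)=0$, we have $w_-(\cdot,0)=0$ on $B_\rho$. Hence the initial-time term in (\ref{eq2.1}) vanishes and the $\partial_t\nu$ term disappears altogether.

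The remaining terms are handled exactly as in (\ref{eq3.2})--(\ref{eq3.4}). The $\psi$-oscillation term is bounded by $\gamma2^{np}\rho^{-sp}(\xi\omega)^p|\mathcal{A}_n|$. The local part of the tail, over $B_R\setminus B_n$, is controlled using $w_-\le\xi\omega$ in $\mathcal{Q}$ and the distance comparison $|y^{-1}\circ x|\ge 2^{-n-4}|y|$. The far part, over $\hn\setminus B_R$, is absorbed by the $-\mathfrak{C}g'$ term after choosing $\mathfrak{C}$. This gives
\begin{align*}
\esssup_t\int_{\tilde B_n}w_-^2\,dx+\int\!\!\iint\frac{|w_-(x,t)-w_-(y,t)|^p}{|y^{-1}\circ x|^{Q+sp}_{\hn}}\le\gamma 2^{(Q+sp)n}\frac{(\xi\omega)^p}{\rho^{sp}}|\mathcal{A}_n|.
\end{align*}
We then apply Proposition \ref{P1.7} with a second cut-off as in Lemma \ref{L3.1}. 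Normalising $Y_n=|\mathcal{A}_n|/|\mathcal{Q}_n|$ and writing the time length as $\iota_0(\xi\omega)^{2-p}\rho^{sp}$, the powers of $\xi\omega$ and $\rho$ cancel. We arrive at
\begin{align*}
Y_{n+1}\le\gamma b^n\,\iota_0^{\frac{\kappa_*-1}{\kappa_*\kappa p}}\,Y_n^{1+\frac{\kappa_*-1}{\kappa_*\kappa p}}.
\end{align*}

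The main obstacle is that we now have no smallness assumption on $Y_0$. The trivial bound $Y_0\le1$ must suffice, and the smallness has to come from $\iota_0$. We therefore check carefully that $\iota_0$ appears with a positive power once the $\sup_t\fint w_-^2$ factor is bounded by $(\xi\omega)^p\rho^{-sp}|\mathcal{A}_n|/|B|$: this is $\le\gamma(\xi\omega)^2\iota_0 Y_n$, which is the source of the $\iota_0$ factor. Choosing $\iota_0$ small depending only on \texttt{data} and using Lemma \ref{L1.3} gives $Y_n\to0$. Hence $u-g\ge\mu^-+\frac{\xi\omega}{4}$ a.e. in $B_{\rho/2}\times(0,\iota_0(\xi\omega)^{2-p}\rho^{sp}]$.

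Finally, if $\tilde\gamma\,\textup{Tail}[(u-\mu^-)_-;\mathcal{Q}]\le\xi\omega$, then $\sup_t|g|\le\frac{1}{4}\xi\omega$ after adjusting $\mathfrak{C}$ and the levels by fixed factors (for instance halving all $\xi_n$). This yields $u-\mu^-\ge\frac14\xi\omega$, with $\tilde\gamma,\iota_0$ independent of $\xi$.
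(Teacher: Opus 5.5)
Your proposal is correct and follows essentially the paper's argument: forward cylinders of fixed height $\iota_0(\xi\omega)^{2-p}\rho^{sp}$ with shrinking balls, and a time-independent cut-off so the initial term vanishes and no $\partial_t\nu$ term appears. The far tail is absorbed through $g$ as in the paper, and the De Giorgi iteration gets its smallness from $\iota_0$ rather than from $Y_0$; you make this explicit via $\sup_t\fint w_-^2\le\gamma\iota_0(\xi\omega)^2Y_n$, which the paper only asserts. One bookkeeping fix: to conclude $u-\mu^-\ge\frac14\xi\omega$ from $\sup_t|g|\le\frac14\xi\omega$ you must double the levels, not halve them, i.e.\ take $\xi_n=\frac{\xi}{2}+\frac{\xi}{2^{n+1}}$ as the paper does (still $w_-(\cdot,0)=0$ because $k_0=\mu^-+\xi\omega$), so that the iteration gives $u-g\ge\mu^-+\frac12\xi\omega$.
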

\begin{proof}
    We show this result for the subsolution by assuming $(x_0,t_0)=(0,0)$. The supersolution counterpart of this result follows in a similar procedure. Comparing with the previous lemma, we make two changes here, namely
    \begin{itemize}
        \item We prove this lemma in the forward in time domain, i.e.  $\mathcal{Q}^\oplus_\rho(\theta)$ type domains and examine the energy estimate (\ref{eq2.1}) on that domain, where $\theta$ will be determined in the sequel.
        \item As far as cut-off functions are concerned, we only consider {\em time-independent} cut-off functions, i.e. $\partial\nu_t=0$. 
    \end{itemize}
    Consider the functions $w_+$ and $g(t)$ as in the previous lemma \ref{L3.1}. If we take $k\geq -\xi\omega+\mu^+$, then examining energy estimate (\ref{eq2.1}) for $(u-g-k)_+$ in the cylinder $\mathcal{Q}^\oplus_\rho(\theta)$ at time level $t=0$ vanishes in view of the term at the time level $t_0-\theta_2$ in Lemma \ref{L2.1}. Let us introduce $\rho_n,\tilde{\rho}_n,\hat{\rho}_n,\bar{\rho}_n,B_n,\tilde{B}_n,\hat{B}_n,\bar{B}_n,$ and $k_n,\xi_n$ as in \cite[Lemma 3.1]{Lia24_2} and the previous lemma \ref{L3.1} respectively, viz.
    \begin{align*}
        \begin{cases}
            &\rho_n=\frac{\rho}{2}+\frac{\rho}{2^{n+1}},\,\,\tilde{\rho}_n=\frac{\rho_n+\rho_{n+1}}{2},\\
            &\hat{\rho}_n=\frac{\rho_n+\tilde{\rho}_n}{2},\,\,\bar{\rho}_n=\frac{\tilde{\rho}_n+\rho_{n+1}}{2}\\
            & B_n=B_{\rho_n}(0),\,\tilde{B}_n=B_{\tilde{\rho}_n}(0),\\
            &\hat{B}_n=B_{\hat{\rho}_n}(0),\,\bar{B}_n=B_{\bar{\rho}_n}(0),\\
            & k_n= \mu^+-\xi_n\omega,\,\,\xi_n=\frac{\xi}{2}+\frac{\xi}{2^{n+1}},
        \end{cases}
    \end{align*}
    and parabolic {\em forward} cylinders as follows
    \begin{align*}
        \begin{cases}
            &B_n\times (0,\theta\rho^{sp}],\tilde{B}_n\times(0,\theta\rho^{sp}],\\
            &\hat{B}_n\times(0,\theta\rho^{sp}],\bar{B}_n\times(0,\theta\rho^{sp}].
        \end{cases}
    \end{align*}
     Here, note that balls are shrinking along $\rho_n$ while the height of the cylinders remains fixed, namely $\theta\rho^{sp}$. The cut-off function $\psi(x)$ in $B_n$ is chosen in the following way
    \begin{align*}
        \psi\equiv\begin{cases}
            & 1\;\;\;\;\mbox{ on } \tilde{B}_n,\\
            & 0\;\;\;\;\mbox{ on } \hat{B}_n^c,
        \end{cases}
        \;\;\;\;\;\;\;\mbox{ such that } |\nabla_{\hn}\psi|\leq \gamma\frac{2^{n+4}}{\rho}.
    \end{align*}
With these choices, the energy estimate (\ref{eq2.1}) becomes
\begin{align}\label{eq3.6}
    \esssup_{0<t<\theta\rho^{sp}}&\intb[\tilde{B}]{n}w^2_+(x,t)dx+\int_0^{\theta\rho^{sp}}\intb[\tilde{B}]{n}\intb[\tilde{B}]{n}\frac{|w_+(x,t)-w_+(y,t)|}{\dist[x]{y}^{Q+sp}}dxdydt\nonumber\\
    &\leq \gamma \int_0^{\theta\rho^{sp}}\intb[\tilde{B}]{n}\intb[\tilde{B}]{n}\max\{w^p_+(x,t),w^p_+(y,t)\}\gaglifrac[\psi]dxdydt\nonumber\\
    &\quad +\gamma \int_0^{\theta\rho^{sp}}\intb[B]{n}(w_+\psi^p)(x,t)\Bigg\{\tail[w_+]{x}{y}{n}{B}dy\Bigg\}dxdt\\
    &\qquad -\mathfrak{C}\int_0^{\theta\rho^{sp}}\intb[B]{n}R^{-sp}\mbox{ Tail}^{p-1}(u_+(t);R,x_0)(w_+\psi^p)(x,t)dxdt\nonumber
\end{align}
Consequently, we have 
\begin{align*}
    \esssup_{0<t<\theta\rho^{sp}}\intb[\tilde{B}]{n}w^2_+(x,t)dx&+\int_0^{\theta\rho^{sp}}\intb[\tilde{B}]{n}\intb[\tilde{B}]{n}\frac{|w_+(x,t)-w_+(y,t)|}{\dist[x]{y}^{Q+sp}}dxdydt\\
    &\leq \gamma 2^{(Q+sp)n}\frac{(\xi\omega)^p}{\rho^{sp}}|\mathcal{A}_n|
\end{align*}
The estimate on the right-hand side of (\ref{eq3.6}) arises by using the same treatment as in the Lemma \ref{L3.1} with suitable reconstruction of the constant $\mathfrak{C}$ and $\gamma$. Moreover, we use the definition $\mathcal{A}_n:=\{(x,t)\in \mathcal{Q}_n:u(x,t)-g(t)>k_n\}$. After running the De Giorgi iteration on this last estimate, one can obtain a $\iota_0$, depending only on data, such that with the choice of $\theta=\iota_0(\xi\omega)^{2-p}$ we have
\begin{align*}
    u(x,t)-g(t)\leq \mu_+-\frac{1}{2}\xi\omega\;\;\;\;\;\mbox{ a.e. in }B_\frac{\rho}{2}\times (0,\iota_0(\xi\omega)^{2-p}\rho^{sp}].
\end{align*}
Consequently, if we impose $\sup_t g(t)\leq \frac{\xi\omega}{4}$ with redefining $\mathfrak{C}=4\mathfrak{C}$, then
\begin{align*}
    u\leq -\frac{\xi\omega}{4}+\mu_+\;\;\;\;\;\mbox{ a.e. in } B_\frac{\rho}{2}\times (0,\iota_0(\xi\omega)^{2-p}\rho^{sp}].
\end{align*}
\end{proof}
The following lemma propagates the measure theoretical results forward in time. 
\subsection{Measure Theoretical Information Forward in Time}
\begin{lemma}\label{L3.3}
    Let $\xi, \alpha\in(0,1)$. For locally bounded, local weak sub(super)solution $u$ of (\ref{main}) in $\Omega_T$ there exists $\delta,\varepsilon\in (0,1)$ depending only on the \textup{\texttt{data}} and $\alpha$ such that if
    $$|\{\pm(\mu^\pm-u(\cdot,t_0))\geq \xi\omega\}\cap B_\rho(x_0)|\geq \alpha|B_\rho(x_0)|,$$
    then either
    $$\frac{1}{\delta}\textup{Tail }[(u-\mu^\pm)_\pm;\mathcal{Q}]>\xi\omega$$
    or
    $$|\{\pm(\mu_\pm-u(\cdot,t)\geq \varepsilon\xi\omega\}\cap B_\rho(x_0)|\geq\frac{\alpha}{2}|B_\rho(x_0)|,\;\;\;\;\;\textup{ for all } t\in (t_0,t_0+\delta(\xi\omega)^{2-p}\rho^{sp}] $$
    provided $B_\rho(x_0)\times(t_0,t_0+\delta(\xi\omega)^{2-p}\rho^{sp}]\subset\mathcal{Q}$. Moreover, we have $\delta\approx\alpha^{p+Q+1}$ and $\varepsilon\approx\alpha$.
\end{lemma}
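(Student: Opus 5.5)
We argue for the supersolution case with $(x_0,t_0)=(0,0)$ and the lower level $\mu^-$; the subsolution case is symmetric. Following \cite{Lia24_5}, we use the energy estimate of Lemma \ref{L2.1} for $w_-=(u-g-k)_-$ with $k=\mu^-+\xi\omega$. The cylinder is $B_\rho\times(0,\delta(\xi\omega)^{2-p}\rho^{sp}]$, and $g$ is the tail-correcting drift built from $\int_{\hn\setminus B_R}u_-^{p-1}|y|^{-Q-sp}dy$. We choose a time-independent cut-off, $\nu\equiv1$, so there is no $\partial_t\nu$ term. We also choose $\psi\in C_0^\infty(B_{(1+\sigma)\rho/2})$, or more precisely $\psi\equiv1$ on $B_{(1-\sigma)\rho}$ and supported in $B_\rho$, with $|\nabla_{\hn}\psi|\le \gamma/(\sigma\rho)$; here $\sigma\in(0,1)$ will be fixed in terms of $\alpha$. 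We drop the good term and the fractional seminorm on the left. What remains is, for all $t\in(0,\delta(\xi\omega)^{2-p}\rho^{sp}]$,
\begin{align*}
\int_{B_\rho}w_-^2\psi^p(x,t)\,dx\le \int_{B_\rho}w_-^2\psi^p(x,0)\,dx+\gamma\frac{(\xi\omega)^p}{\sigma^{p}\rho^{sp}}\,\delta(\xi\omega)^{2-p}\rho^{sp}|B_\rho| +\gamma\frac{(\xi\omega)^{p}}{\sigma^{Q+sp}\rho^{sp}}\,\delta(\xi\omega)^{2-p}\rho^{sp}|B_\rho|.
\end{align*}
The first right-hand term uses $w_-\le\xi\omega$ (valid once $\sup_t|g|$ is small) together with the kernel bound $\int_{B_\rho}|x-y|^{-Q-(s-1)p}dy\lesssim\rho^{(1-s)p}$ from Lemma \ref{L1.1}. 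The local tail part is treated as in (\ref{eq3.3}): for $x\in\operatorname{supp}\psi$ and $y\notin B_\rho$ we have $|y^{-1}\circ x|\ge\sigma|y|$, which gives the factor $\sigma^{-Q-sp}$. The long-range tail piece beyond $B_R$ cancels against the $\mathfrak{C}$-term, as in Lemma \ref{L3.1}.

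For the initial term, the hypothesis says $w_-(\cdot,0)=0$ on a set of measure at least $\alpha|B_\rho|$. Hence $\int w_-^2(x,0)\,dx\le(\xi\omega)^2(1-\alpha)|B_\rho|$. For the lower bound on the left we restrict to $B_{(1-\sigma)\rho}$ and the set where $u<\mu^-+\varepsilon\xi\omega$. There $w_-\ge(1-\varepsilon)\xi\omega$, still modulo $g$, which we handle by imposing $\sup_t g\le\varepsilon\xi\omega$; this is where the alternative $\frac1\delta\textup{Tail}>\xi\omega$ comes from. We obtain
\begin{align*}
|\{u(\cdot,t)<\mu^-+\varepsilon\xi\omega\}\cap B_{(1-\sigma)\rho}|\le\frac{|B_\rho|}{(1-\varepsilon)^2}\Big[(1-\alpha)+\gamma\frac{\delta}{\sigma^{Q+p}}\Big].
\end{align*}
Adding $|B_\rho\setminus B_{(1-\sigma)\rho}|\le \gamma Q\sigma|B_\rho|$ controls the full ball.

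It remains to choose the parameters. Take $\sigma\approx\alpha/(8\gamma Q)$ so that the annulus costs at most $\alpha/8$. Take $\varepsilon\approx\alpha$ so that $(1-\alpha)/(1-\varepsilon)^2\le1-\tfrac34\alpha$. Finally take $\delta\approx\alpha\sigma^{Q+p}\approx\alpha^{Q+p+1}$ so that the energy term costs at most $\alpha/8$. With these choices the bad set has measure at most $(1-\alpha/2)|B_\rho|$, which is the claim, and these scalings also give the stated orders $\delta\approx\alpha^{p+Q+1}$ and $\varepsilon\approx\alpha$.

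The main obstacle I expect is the drift $g$. We need $0\le g\le\varepsilon\xi\omega$ on the whole time interval, so that the level shift does not destroy the comparison at $t=0$ and at later $t$. This requires $g(t)\lesssim\mathfrak{C}\,\delta(\xi\omega)^{2-p}\rho^{sp}\cdot\textup{Tail}$-type quantities, which is exactly the first alternative once the constants are absorbed into $1/\delta$. A related point needs checking: with $\varepsilon\approx\alpha$, the smallness of $g$ must be written relative to $\varepsilon\xi\omega$. I would try absorbing $\varepsilon$ into the constant in front of Tail, and I would also verify that the start of the time integral in $g$ can be taken at $t_0$ rather than $t_0-R^{sp}$.
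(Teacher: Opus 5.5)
Your skeleton is the paper's: the energy estimate with a time-independent cutoff and $\nu\equiv1$, comparison of $\int w^2\psi^p$ at $t_0$ and at $t$, Chebyshev on $B_{(1-\sigma)\rho}$ plus the annulus bound $Q\sigma|B_\rho|$, and the choices $\sigma\approx\alpha$, $\delta\approx\alpha\sigma^{Q+p}$, $\varepsilon\approx\alpha$. The only real difference is how you treat the far-range tail.

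The paper uses a constant level, $\mu^+-\xi\omega$ in its subsolution case, with no drift at all. It bounds the contribution of $\hn\setminus B_R$ by $\gamma\sigma^{-Q-sp}\,\xi\omega|B_\rho|$ times the time integral of the far tail over $(t_0,t_0+\delta(\xi\omega)^{2-p}\rho^{sp}]$. The alternative $\frac1\delta\textup{Tail}\le\xi\omega$ then makes this at most $\gamma\delta(\xi\omega)^2\sigma^{-Q-sp}|B_\rho|$, the same size as the other error terms. This suffices because here only a crude bound in terms of $|B_\rho|$ is needed, so $L^1$-in-time control of the tail is enough. The drift earns its keep in Lemma \ref{L3.1}, where the far term must be bounded by a multiple of $|\mathcal{A}_n|$ inside the iteration, and an $L^{1+\varepsilon}$-in-time tail does not give that directly. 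Dropping $g$ therefore removes everything you list as the main obstacle.

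If you keep the drift, your route still closes. Only $g'$ enters the energy estimate, so you may start $g$ at $t_0$. Then $\sup g\le\mathfrak{C}\,\textup{Tail}\le\mathfrak{C}\delta\,\xi\omega\le\varepsilon\xi\omega$ once $\delta\le\varepsilon/\mathfrak{C}$, which is compatible with $\delta\approx\alpha^{Q+p+1}$ and $\varepsilon\approx\alpha$. Three details need fixing, however.

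First, for a supersolution truncated from below, the level must decrease in time for $g'\ge0$ to produce a good term, so take $w_-=(u+g-k)_-$. With your $(u-g-k)_-$ the $\mathfrak{C}$-term enters with the wrong sign; this is the $\mp$ in Lemma \ref{L2.1}. With the correct sign, the smallness $\sup g\le\varepsilon\xi\omega$ is used in the lower bound $w_-\ge(1-\varepsilon)\xi\omega-g$, not for $w_-\le\xi\omega$, which then holds automatically.

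Second, $w_-\le u_-$ fails on $\hn\setminus B_R$ when $k>0$. Split $w_-^{p-1}\lesssim(u-\mu^-)_-^{p-1}+(\xi\omega)^{p-1}$ and build $g$ from $(u-\mu^-)_-$, which is also the quantity appearing in the lemma's alternative.

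Third, $\operatorname{supp}\psi$ must lie in $B_{(1-\sigma/2)\rho}$, not merely in $B_\rho$. Only then do you get $|y^{-1}\circ x|_{\hn}\ge\frac{\sigma}{2}|y|_{\hn}$ for $y\notin B_\rho$.
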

\begin{proof}
    Without loss of any generality, we assume that $(x_0,t_0)=(0,0)$. We will prove this lemma for subsolutions. Let us denote $aM:=-a\xi\omega+\mu^+$ for any real number $a$, and for any $t>0$
    \begin{align*}
        \mathcal{A}_{aM,\rho}(t):=\{u(\cdot,t)>aM\}\cap B_\rho
    \end{align*}
then clearly 
\begin{align}\label{eq3.7}
    |\mathcal{A}_{M,\rho}(0)|\leq (1-\alpha)|B_\rho|
\end{align}
To proceed further, we consider a time-independent cut-off function $\psi$ on $B_\rho$ as follows
\begin{align*}
    \psi\equiv\begin{cases}
        & 1 \;\;\;\;\mbox{ on } B_{(1-\sigma)\rho}\\
        & 0 \;\;\;\;\mbox{ on }B^c_{(1-\frac{\sigma}{2})\rho}
    \end{cases}
    \;\;\;\;\; \mbox{ such that } |\nabla_{\hn}\psi|\leq \frac{\gamma}{\sigma\rho}
\end{align*}
Examine the energy estimate (\ref{eq2.1}) over the domain $B_\rho\times(0,\delta(\xi\omega)^{2-p}\rho^{sp}]$ with the truncated function $w_+=(u-M)_+$. Then we have for all $t\in (0,\delta(\xi\omega)^{2-p}\rho^{sp}]$,
\begin{align}\label{eq3.8}
   \intb[B]{(1-\sigma)\rho}w^2_+(x,t)dx&\leq \intb[B]{\rho}w^2_+(x,0)dx+\gamma\int_0^{\delta(\xi\omega)^{2-p}\rho^{sp}}\intb[B]{\rho}\intb[B]{\rho}\max\{w^p_+(x,t),w^p_+(y,t)\}\gaglifrac[\psi]dxdydt\nonumber\\
   & \quad+\gamma\int_0^{\delta(\xi\omega)^{2-p}\rho^{sp}}\intb[B]{\rho}(w_+\psi^p)(x,t)\Bigg\{\tail[w_+]{x}{y}{\rho}{B}dy\Bigg\}dxdt
\end{align}
Estimate the right-hand side of (\ref{eq3.7}) one by one. Consider the second term
\begin{align*}
   \int_0^{\delta(\xi\omega)^{2-p}\rho^{sp}}&\intb[B]{\rho}\intb[B]{\rho}\max\{w^p_+(x,t),w^p_+(y,t)\}\gaglifrac[\psi]dxdydt \\
   &\leq \gamma \delta(\xi\omega)^{2-p}\rho^{sp}\frac{(\xi\omega)^{p}}{(\sigma\rho)^p}\intb[B]{\rho}\intb[B]{\rho}\frac{1}{\dist[x]{y}^{Q+sp-p}}dxdy\\
   &=\gamma\delta\frac{(\xi\omega)^2}{\sigma^p}|B_\rho|
\end{align*}
We observe that for $y\in\mathbb{H}^N\setminus B_\rho$ and $x\in \mbox{supp } \psi\subset B_{(1-\frac{\sigma}{2})\rho}$, we have
\begin{align*}
    \frac{\dist[x]{y}}{|y|_{\hn}}\geq 1-\frac{|x|_{\hn}}{|y|_{\hn}}\geq \frac{\sigma}{2}
\end{align*}
Using this observation, the third term has the following estimation
\begin{align*}
    \int_0^{\delta(\xi\omega)^{2-p}\rho^{sp}}&\intb[B]{\rho}(w_+\psi^p)(x,t)\Bigg\{\tail[w_+]{x}{y}{\rho}{B}dy\Bigg\}dxdt\\
    &\leq 2^{Q+sp}\frac{\xi\omega|B_\rho|}{\sigma^{Q+sp}}\Bigg\{\int_0^{\delta(\xi\omega)^{2-p}\rho^{sp}}\int_{B_R\setminus B_\rho}\frac{w_+^{p-1}(y,t)}{|y|_{\hn}^{Q+sp}}dydt+\int_0^{\delta(\xi\omega)^{2-p}\rho^{sp}}\int_{{\hn}\setminus B_R}\frac{w_+^{p-1}(y,t)}{|y|_{\hn}^{Q+sp}}dydt\Bigg\}\\
    &\leq \gamma\frac{\xi\omega|B_\rho|}{\sigma^{Q+sp}}\Bigg\{\gamma \delta\xi\omega+\int_0^{\delta(\xi\omega)^{2-p}\rho^{sp}}\int_{{\hn}\setminus B_R}\frac{w_+^{p-1}(y,t)}{|y|_{\hn}^{Q+sp}}dydt\Bigg\}\\
    &\leq \gamma\frac{\delta(\xi\omega)^2}{\sigma^{Q+sp}}|B_\rho|
\end{align*}
In this estimation, second last deduces from lemma \ref{L1.1} and  last line comes from the enforcement of 
\begin{align*}
    \frac{1}{\delta}\textup{Tail }[u_\pm;\mathcal{Q}]\leq \delta\xi\omega
\end{align*}
Hence, (\ref{eq3.8}) will be after these estimates,
\begin{align*}
     \intb[B]{(1-\sigma)\rho}w^2_+(x,t)dx&\leq \intb[B]{\rho}w^2_+(x,0)dx+\gamma\frac{\delta(\xi\omega)^2}{\sigma^{Q+sp}}|B_\rho|\\
     &\leq \Big[(1-\alpha)+\frac{\gamma\delta}{\sigma^{Q+p}}\Big](\xi\omega)^2|B_\rho|,
\end{align*}
for all $t\in (0,\delta(\xi\omega)^{2-p}\rho^{sp}]$,
where in the last step, we used (\ref{eq3.7}). The left-hand side has the following estimation for $\varepsilon\in(0,1)$,
\begin{align*}
    \intb[B]{(1-\sigma)\rho}w^2_+(x,t)dx&\geq \int_{B_{(1-\sigma)\rho}\cap\{u(\cdot,t)>\varepsilon M\}}w^2_+(x,t)dx\\
    &\geq (1-\varepsilon)^2(\xi\omega)^2|\mathcal{A}_{\varepsilon M,(1-\sigma)\rho}(t)|
\end{align*}
Also 
\begin{align*}
    |\mathcal{A}_{\varepsilon M,\rho}(t)|&=|\mathcal{A}_{\varepsilon M,(1-\sigma)\rho}(t)\cup(\mathcal{A}_{\varepsilon M,\rho}(t)-\mathcal{A}_{\varepsilon M,(1-\sigma)\rho}(t)|\\
    &\leq |\mathcal{A}_{\varepsilon M,(1-\sigma)\rho}(t)|+|B_\rho-B_{(1-\sigma)\rho}|\\
    &\leq|\mathcal{A}_{\varepsilon M,(1-\sigma)\rho}(t)|+Q\sigma|B_\rho|
\end{align*}
Combining the last three estimates, we have for all $t\in (0,\delta(\xi\omega)^{2-p}\rho^{sp}]$
\begin{align*}
    |\mathcal{A}_{\varepsilon M,\rho}(t)|\leq \frac{1}{(1-\varepsilon)^2}\Big[(1-\alpha)+\frac{\gamma\delta}{\sigma^{Q+p}}+Q\sigma\Big]|B_\rho|
\end{align*}
Consequently, we obtain for all $0\leq t\leq \delta(\xi\omega)^{2-p}\rho^{sp}$
\begin{align*}
    |\mathcal{A}_{\varepsilon M,\rho}(t)|\leq (1-\alpha/2)|B_\rho|
\end{align*}
after the choices of the parameters in the following way, i.e. 
\begin{align*}
    \sigma=\frac{\alpha}{8Q},\;\;\delta=\frac{1}{8\gamma}\sigma^{Q+p}\alpha\;\mbox{ and } (1-\varepsilon)\geq \sqrt{\frac{1-\frac{3}{4}\alpha}{1-\frac{1}{2}\alpha}}
\end{align*}
\end{proof}
Unlike the local case, there is no De Giorgi isoperimetric type inequality  result in non-local case due to the possible presence of {\em jump} in the functions of $W^{s,p}$ space. In his paper, Cozzi\cite{Coz17_6} proves De Giorgi isoperimetric inequality for operator whose prototype is the fractional p-Laplacian which is stable under $s\rightarrow 1$. But whenever $s$ moving away from $1$, he relies on the {\em good term} which is also present in our Caccioppoli estimate (\ref{eq2.1}). We will also use this {\em good term} to derive the following lemma.
\subsection{Shrinking Lemma}
\begin{lemma}\label{L3.4}
    Let $u$ be a locally bounded, local weak sub(super)solution to (\ref{main}) in $\Omega_T$. For some $\delta,\alpha$ and $\xi$ in $(0,1/2)$, set $\theta=\delta(\sigma\xi\omega)^{2-p}$ there exists $\gamma>1$ depending only on \textup{\texttt{data}} and independent of $\{\alpha,\delta,\sigma,\xi\}$ such that if 
    $$|\{\pm(\mu^\pm-u(\cdot,t)\geq \xi\omega\}\cap B_\rho(x_0)|\geq \alpha|B_\rho(x_0),\;\;\;\;\;\textup{ for all } t\in (t_0-\theta\rho^{sp},t_0].$$
    then either
    $$\frac{1}{\delta}\textup{Tail }[(u-\mu^\pm)_\pm;\mathcal{Q}]>\sigma\xi\omega$$
    or
    $$|\{\pm(\mu_\pm-u(\cdot,t)\geq \sigma\xi\omega\}\cap (z_0+\mathcal{Q}_\rho^\ominus(\theta))|\leq \gamma\frac{\sigma^{p-1}}{\delta\alpha}|z_0+\mathcal{Q}_\rho^\ominus(\theta)|,$$
\end{lemma}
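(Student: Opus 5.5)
We will prove the statement for supersolutions with $(x_0,t_0)=(0,0)$, i.e. with the sets $\{u-\mu^-\ge \xi\omega\}$ and $\{u-\mu^-\le\sigma\xi\omega\}$; the subsolution case is symmetric. The idea, following \cite{Lia24_5}, is to replace the De Giorgi isoperimetric inequality by the \emph{good term} in (\ref{eq2.1}). Set $k=\mu^-+\sigma\xi\omega$ and $w_-=(u-g-k)_-$, where $g$ is the tail-correcting function of Lemma \ref{L2.1}. We apply (\ref{eq2.1}) on $B_{2\rho}\times(-2\theta\rho^{sp},0]$ (or on $B_\rho$ enlarged by a fixed factor, as inclusion in $\mathcal{Q}$ permits). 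We take $\psi\equiv1$ on $B_\rho$ with $|\nabla_{\hn}\psi|\le\gamma/\rho$, and $\nu\equiv1$ on $(-\theta\rho^{sp},0]$ with $|\partial_t\nu|\le\gamma/(\theta\rho^{sp})$. We keep only the good term on the left-hand side:
$$\int_{-\theta\rho^{sp}}^0\int_{B_\rho}w_-(x,t)\int_{B_\rho}\frac{w_+^{p-1}(y,t)}{\dist[x]{y}^{Q+sp}}\,dy\,dx\,dt.$$

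For the lower bound, note that on $\{u(\cdot,t)\ge\mu^-+\xi\omega\}$ we have $w_+\ge(1-\sigma)\xi\omega-g\ge\frac14\xi\omega$, using $\sigma<\frac12$ and $\sup g\le\frac14\sigma\xi\omega$; the latter is guaranteed by the tail alternative after readjusting $\mathfrak C$, exactly as at the end of Lemma \ref{L3.1}. On $\{u(\cdot,t)\le\mu^-+\frac12\sigma\xi\omega\}$ we have $w_-\ge\frac14\sigma\xi\omega$. Since $\dist[x]{y}\le2\rho$ on $B_\rho\times B_\rho$ and the hypothesis holds for every $t$, the good term is at least
$$c\,\sigma\xi\omega\,(\xi\omega)^{p-1}\rho^{-Q-sp}\,\alpha|B_\rho|\;\Big|\Big\{u\le\mu^-+\tfrac12\sigma\xi\omega\Big\}\cap\mathcal{Q}^\ominus_\rho(\theta)\Big|.$$
Working with the level $\frac12\sigma\xi\omega$ on the left and then renaming $\sigma$ only changes $\gamma$.

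For the upper bound, we use $w_-\le\sigma\xi\omega$ throughout. The gradient term is then at most $\gamma(\sigma\xi\omega)^p\rho^{-sp}|Q_{2\rho}(\theta)|$. The time-derivative term is at most $\gamma(\sigma\xi\omega)^2/(\theta\rho^{sp})\,|Q|=\gamma\delta^{-1}(\sigma\xi\omega)^p\rho^{-sp}|Q|$, because $\theta=\delta(\sigma\xi\omega)^{2-p}$. For the local tail term on $B_R\setminus B_{2\rho}$ we use $w_-\le\sigma\xi\omega$ together with Lemma \ref{L1.1}, which gives $\gamma(\sigma\xi\omega)^p\rho^{-sp}|Q|$. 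The far tail beyond $B_R$ is either cancelled by the $\mathfrak{C}g'$ term or, through the alternative $\frac1\delta\textup{Tail}\le\sigma\xi\omega$, also bounded by $\gamma\delta^{-1}(\sigma\xi\omega)^p\rho^{-sp}|Q|$. The endpoint term $\int w_-^2$ at the initial time vanishes because $\nu$ vanishes there, and at the final time it has a good sign.

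Dividing the lower bound by the upper bound, the factors $\sigma\xi\omega\cdot(\xi\omega)^{p-1}$ versus $(\sigma\xi\omega)^p/\delta$ leave $\sigma^{p-1}/\delta$. The factor $|B_\rho|\rho^{-Q}$ is dimensionless, and $|Q_{2\rho}|\approx|\mathcal{Q}^\ominus_\rho(\theta)|$. This yields
$$\Big|\{u\le\mu^-+\sigma\xi\omega\}\cap\mathcal{Q}_\rho^\ominus(\theta)\Big|\le\gamma\frac{\sigma^{p-1}}{\delta\alpha}|\mathcal{Q}_\rho^\ominus(\theta)|.$$

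The main obstacle is the bookkeeping in the tail terms. The local annulus contribution must be bounded by $(\sigma\xi\omega)^{p-1}$ rather than by $\omega^{p-1}$, which holds because $w_-\le\sigma\xi\omega$ wherever $u\ge\mu^-$ in $\mathcal{Q}$. At the same time $g$ must remain below $\frac14\sigma\xi\omega$ over a time interval of length $\theta\rho^{sp}$. Both should follow from the tail alternative with the threshold $\sigma\xi\omega$, but the constants $\mathfrak{C}$ and $\gamma$ must be chosen so that they stay independent of $\alpha,\delta,\sigma,\xi$.
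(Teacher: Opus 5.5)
Your proposal is correct and follows essentially the paper's argument: keep the good term in (\ref{eq2.1}), bound it below using the measure hypothesis on every time slice together with $\dist[x]{y}\le 2\rho$, and bound the right-hand side by $\gamma\delta^{-1}(\sigma\xi\omega)^p\rho^{-sp}|\mathcal{Q}^\ominus_\rho(\theta)|$, with the factor $\delta^{-1}$ coming from the time term.

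The differences are cosmetic. The paper uses a time-independent cut-off and bounds the initial slice $\int_{B_{2\rho}}w_+^2(x,-\theta\rho^{sp})\,dx\le(\sigma\xi\omega)^2|B_{2\rho}|$ instead of your $\partial_t\nu$ term, and it truncates at level $2\sigma$ instead of renaming $\sigma$. It also needs no shift $g$ here, because the far tail is multiplied by the full cylinder measure rather than by $|\mathcal{A}_n|$; so your second option (taking $\mathfrak{C}=0$ and using the time-integrated Tail alternative directly) already suffices.
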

provided $z_0+\mathcal{Q}_{2\rho}^\ominus(\theta)\subset\mathcal{Q}.$
\begin{proof}
    Without loss of generality, we consider $(x_0,t_0)=(0,0)$. We consider the truncated function
    \begin{align*}
        w_+=(u-2\sigma M)_+,
    \end{align*}
   where $\sigma M:=-\sigma\xi\omega+\mu^+$. Employ this function in energy estimate along with the $B_{2\rho}\times (-\theta\rho^{sp},0)$ where we choose $\theta=\delta(\sigma\xi\omega)^{2-p}$.  Consider the time-independent cut-off function of $\psi$ in $B_{2\rho}$ defined as follow
   \begin{align*}
       \psi\equiv \begin{cases}
           & 1\;\;\;\mbox{ on } B_\rho\\
           & 0\;\;\;\mbox{ on } B_{\frac{3}{2}\rho}
       \end{cases}
       \;\;\;\;\mbox{ such that } |\nabla_{\hn}\psi|\leq\frac{\gamma}{\rho}
   \end{align*}
   In this course of estimation, we can not ignore the {\em good} term. Using this cut-off function along with the considered cylinder, we have
   \begin{align}\label{eq3.9}
       \int_{-\delta(\sigma\xi\omega)^{2-p}\rho^{sp}}^{0}&\int_{B_\rho}w_+(x,t)dx\Bigg\{\int_{B_{2\rho}}\frac{w^{p-1}_-(y,t)}{\dist[x]{y}^{Q+sp}}dy\Bigg\}dt\nonumber\\
       &\leq \int_{-\delta(\sigma\xi\omega)^{2-p}\rho^{sp}}^{0}\intb[B]{2\rho}\intb[B]{2\rho}\max\{w^p_+(x,t),w^p_+(y,t)\}\gaglifrac[\psi]dxdydt\nonumber\\
       &\quad + \gamma\int_{-\delta(\sigma\xi\omega)^{2-p}\rho^{sp}}^{0}\intb[B]{2\rho}(w_+\psi^p)(x,t)\Bigg\{\tail[w_+]{x}{y}{2\rho}{B}dy\Bigg\}dxdt\\
       &\qquad +\intb[B]{2\rho}w^2_+(x,-\delta(\sigma\xi\omega)^{2-p}\rho^{sp})dx.\nonumber
   \end{align}
   Now successively estimate the right-hand side of (\ref{eq3.9}). The third term is estimated as follows
   \begin{align*}
       \intb[B]{2\rho}w^2_+(x,-\delta(\sigma\xi\omega)^{2-p}\rho^{sp})dx&\leq (\sigma\xi\omega)^2|B_{2\rho}|\\
       &\leq \gamma \frac{(\sigma\xi\omega)^p}{\delta\rho^{sp}}|\mathcal{Q}_\rho(\theta)|
   \end{align*}
   recalling that $|B_{2\rho}|= \mathcal{C}|B_\rho|$.\\
   The estimation of the first term on the right-hand side is below,
   \begin{align*}
       \int_{-\delta(\sigma\xi\omega)^{2-p}\rho^{sp}}^{0}&\intb[B]{2\rho}\intb[B]{2\rho}\max\{w^p_+(x,t),w^p_+(y,t)\}\gaglifrac[\psi]dxdydt\\
       &\leq \gamma\delta(\sigma\xi\omega)^2\rho^{sp-p}\intb[B]{2\rho}\intb[B]{2\rho}\frac{1}{\dist[x]{y}^{Q+(s-1)p}}dxdy\\
       &\leq \gamma\delta(\sigma\xi\omega)^2\rho^{sp-p}\frac{|B_{2\rho}|}{\rho^{(s-1)p}}\\
       &\leq \gamma\frac{(\sigma\xi\omega)^p}{\rho^{sp}}|\mathcal{Q}_\rho(\theta)|
   \end{align*}
Af for the {\em tail} estimate in the second term in (\ref{eq3.9}), note that $x\in \mbox{supp } \psi\subset B_{\frac{3}{2}\rho}$ and $y\in B_{2\rho}^c$, there holds
\begin{align*}
    \frac{\dist[x]{y}}{|y|_{\hn}}\geq 1-\frac{|x|_{\hn}}{|y|_{\hn}}\geq \frac{1}{4}
\end{align*}
Consequently,
\begin{align*}
    \int_{-\delta(\sigma\xi\omega)^{2-p}\rho^{sp}}^{0}&\intb[B]{2\rho}(w_+\psi^p)(x,t)\Bigg\{\tail[w_+]{x}{y}{2\rho}{B}dy\Bigg\}dxdt\\
    & \leq \gamma (\sigma\xi\omega)|B_{2\rho}|\int_{-\delta(\sigma\xi\omega)^{2-p}\rho^{sp}}^{0}\Bigg\{\int_{B_R\setminus B_{2\rho}}\frac{w_+^{p-1}(y,t)}{|y|_{\hn}^{Q+sp}}dy+\int_{\hn\setminus B_R}\frac{w_+^{p-1}(y,t)}{|y|_{\hn}^{Q+sp}}dy\Bigg\}dt\\
    &\leq \gamma (\sigma\xi\omega)|B_{2\rho}|\Bigg\{\gamma\delta(\sigma\xi\omega)+\int_{-\delta(\sigma\xi\omega)^{2-p}\rho^{sp}}^{0}\int_{\hn\setminus B_R}\frac{w_+^{p-1}(y,t)}{|y|_{\hn}^{Q+sp}}dy\Bigg\}dt\\
    &\leq \gamma\frac{(\sigma\xi\omega)^p}{\rho^{sp}}|\mathcal{Q}_\rho|
\end{align*}
In the course of calculations, the second inequality is due to \cite[Lemma 2.6]{MPPP23_3}. The last inequality is obtained by using the {\em tail} assumption for this lemma and $|B_{2\rho}|= \mathcal{C}|B_\rho|$. Combining the last three estimates into the right-hand side of the energy estimates, it becomes
\begin{align*}
    \int_{-\delta(\sigma\xi\omega)^{2-p}\rho^{sp}}^{0}\int_{B_\rho}w_+(x,t)dx\Bigg\{\int_{B_{2\rho}}\frac{w^{p-1}_-(y,t)}{\dist[x]{y}^{Q+sp}}dy\Bigg\}dt\leq \gamma\frac{(\sigma\xi\omega)^p}{\delta\rho^{sp}}|\mathcal{Q}_\rho|
\end{align*}
It remains to estimate the left-hand side of the previous inequality. This will be done by extending the integral over the smaller sets and by employing the given measure-theoretical information:
\begin{align*}
    \int_{-\delta(\sigma\xi\omega)^{2-p}\rho^{sp}}^{0}&\int_{B_\rho}w_+(x,t)dx\Bigg\{\int_{B_{2\rho}}\frac{w^{p-1}_-(y,t)}{\dist[x]{y}^{Q+sp}}dy\Bigg\}dt\\
    &\geq \int_{-\delta(\sigma\xi\omega)^{2-p}\rho^{sp}}^{0}\int_{B_\rho}w_+(x,t)\chi_{\{u(x,t)\geq \sigma M\}}dx\Bigg\{\intb[B]{2\rho}\frac{w^{p-1}_-(y,t)\chi_{\{u(x,t)\leq M\}}}{\dist[x]{y}^{Q+sp}}dy\Bigg\}dt\\
    &\geq \sigma\xi\omega|\{u(x,t)\geq \sigma M\}\cap\mathcal{Q}_\rho|\Bigg(\frac{(\frac{1}{4}\xi\omega)^{p-1}\alpha|B_\rho|}{(4\rho)^{Q+sp}}\Bigg)\\
    &\geq 4^{-(Q+p+sp-1)}\frac{(\xi\omega)^p\alpha\sigma}{\rho^{sp}}|\{u(x,t)\geq \sigma M\}\cap\mathcal{Q}_\rho|
\end{align*}
By combining these estimates and a suitable adjustment of the constants, we have our result.
\end{proof}
\section{Singular case: $1<p\leq 2$}\label{S4}
The following expansion of positivity, which is one of the main ingredients in the context of H\"older and Harnack estimates, translate positivity from measure estimate to pointwise one. It works as to supress the oscillation which plays a key role to derive our main result.  
\subsection{Expansion of Positivity}
\begin{lemma}\label{L4.1}
Consider some constants $\alpha, \xi\in (0,1)$ and $1<p\leq 2$. For locally bounded, local weak sub(super)solution to (\ref{main}) in $\Omega_T$, there exists constants $\delta,\eta\in (0,1),\tilde{\gamma}>1$ depending only on the \textup{\texttt{data}} and $\alpha$ such that if $B_{4\rho}(x_0)\times (t_0,t_0+\delta(\xi\omega)^{2-p}\rho^{sp}\subset\mathcal{Q}$ and 
$$|\{\pm(\mu^\pm-u(\cdot,t_0)\geq \xi\omega\}\cap B_\rho(x_0)|\geq \alpha|B_\rho(x_0)|.$$
Then either 
$$\tilde{\gamma}\textup{Tail }[(u-\mu^\pm)_\pm;\mathcal{Q}]>\eta\xi\omega$$
or
$$\pm(\mu^\pm-u)\geq \eta\xi\omega,\;\;\;\;\;\textup{ a.e. in } B_{2\rho}(x_0)\times(t_0+\frac{1}{2}\delta(\xi\omega)^{2-p}\rho^{sp},t_0+\delta(\xi\omega)^{2-p}\rho^{sp}].$$
Moreover, $\delta\approx\alpha^{p+Q+1}$ and $\eta\approx\alpha^q$ for some $q>1$ depending on the \textup{\texttt{data}}. 
\end{lemma}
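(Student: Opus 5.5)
We follow the scheme of \cite{Lia24_5}, chaining the lemmas of Section \ref{S3}. We treat the supersolution case with $(x_0,t_0)=(0,0)$ and $\mu^-$; the other case is symmetric. Whenever a tail alternative in a cited lemma occurs, we absorb it into the single alternative $\tilde{\gamma}\,\textup{Tail}[(u-\mu^-)_-;\mathcal{Q}]>\eta\xi\omega$, which is possible since $\tilde\gamma$ may depend on $\alpha$.

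\emph{Step 1: propagate the measure information forward in time.} Apply Lemma \ref{L3.3} on $B_{4\rho}$, where the hypothesis holds with density $\alpha 4^{-Q}$ after enlarging the ball. This gives $\delta_0\approx\alpha^{p+Q+1}$ and $\varepsilon_0\approx\alpha$ such that
\begin{align*}
|\{u(\cdot,t)-\mu^-\geq\varepsilon_0\xi\omega\}\cap B_{4\rho}|\geq \tfrac{\alpha}{2}4^{-Q}|B_{4\rho}|
\end{align*}
for all $t\in(0,\delta_0(\xi\omega)^{2-p}\rho^{sp}]$. Here the singular range $p\le2$ matters: the time scale $(\xi\omega)^{2-p}$ only shrinks when the level is lowered, so at the later stages the time interval is still covered.

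\emph{Step 2: shrinking.} Fix a time $\bar t$ in the second half of the interval. Apply Lemma \ref{L3.4} on $B_{4\rho}$, with level $\varepsilon_0\xi\omega$ and parameter $\sigma$, over the backward cylinder $(\bar t,0)+\mathcal{Q}^\ominus_{4\rho}(\theta)$, where $\theta=\delta(\sigma\varepsilon_0\xi\omega)^{2-p}$. For $p\le2$ and $\sigma\le1$ we have $(\sigma\varepsilon_0\xi\omega)^{2-p}\le(\xi\omega)^{2-p}$, so this cylinder fits inside the strip of Step 1 once $\delta\le\delta_0/2\cdot4^{-sp}$. The lemma yields
\begin{align*}
|\{u-\mu^-\le\sigma\varepsilon_0\xi\omega\}\cap\mathcal{Q}^\ominus_{4\rho}(\theta)|\le\gamma\frac{\sigma^{p-1}}{\delta\alpha}|\mathcal{Q}^\ominus_{4\rho}(\theta)|.
\end{align*}
Choose $\sigma$ so small that the right-hand side is at most $\iota|\mathcal{Q}^\ominus_{4\rho}(\theta)|$, with $\iota$ from Lemma \ref{L3.1} applied at level $\sigma\varepsilon_0\xi\omega$ with the same $\delta$. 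This forces $\sigma\approx(\iota\delta\alpha)^{1/(p-1)}$, a power of $\alpha$.

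\emph{Step 3: De Giorgi.} Lemma \ref{L3.1} now gives $u-\mu^-\ge\frac14\sigma\varepsilon_0\xi\omega$ a.e. on $B_{2\rho}\times(\bar t-\theta(2\rho)^{sp},\bar t]$. Letting $\bar t$ range over $(\frac12\delta(\xi\omega)^{2-p}\rho^{sp},\delta(\xi\omega)^{2-p}\rho^{sp}]$ covers the target set. This uses $(\sigma\varepsilon_0)^{2-p}\le1$, so that the union of these backward cylinders covers the required strip after adjusting constants; if the lengths do not match, iterate the cylinders in time. We then set $\eta=\frac14\sigma\varepsilon_0\approx\alpha^q$. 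Every tail condition used above reads $\textup{Tail}\lesssim(\text{power of }\alpha)\cdot\eta\xi\omega$ or weaker, which fixes $\tilde\gamma$.

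\emph{Main obstacle.} The delicate point is making the time scales compatible. Lemma \ref{L3.3} delivers a fixed strip of length $\delta_0(\xi\omega)^{2-p}\rho^{sp}$, while Lemmas \ref{L3.4} and \ref{L3.1} need cylinders of length $\delta(\sigma\varepsilon_0\xi\omega)^{2-p}\rho^{sp}$. This is where $p\le2$ is decisive, and one must make sure the statement's $\delta$ is chosen consistently, i.e.\ as $\delta_0$ up to a constant. We would also verify that Lemma \ref{L3.1} indeed yields a pointwise bound on the whole ball $B_{2\rho}$; this requires running it on radius $4\rho$, which is why the hypothesis includes $B_{4\rho}$.
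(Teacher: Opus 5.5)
Your proposal follows the paper's proof essentially step for step: Lemma \ref{L3.3} on $B_{4\rho}$, then the shrinking Lemma \ref{L3.4} and the De Giorgi Lemma \ref{L3.1} at level $\sigma\varepsilon_0\xi\omega$ and radius $4\rho$, on backward cylinders whose top time $\bar t$ sweeps the later part of the forward strip, with $\eta=\tfrac14\sigma\varepsilon_0$ and all tail alternatives absorbed into one constant $\tilde\gamma$. The only thing to tidy is your double use of $\delta$, which as written breaks down for $p$ equal or close to $2$: let the statement's $\delta$ be $\delta_0$ (up to a constant) and run Lemmas \ref{L3.4} and \ref{L3.1} with the smaller parameter $\delta'=\tfrac12\,4^{-sp}\delta$. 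Then every backward cylinder ending at $\bar t\ge\tfrac12\delta(\xi\omega)^{2-p}\rho^{sp}$ has length at most $\tfrac12\delta(\xi\omega)^{2-p}\rho^{sp}$ and stays above $t_0$ even for $p=2$; the pointwise bound at each top time $\bar t$ then already covers the target window, and no iteration in time is needed.
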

\begin{proof}
    We consider proving this lemma for the subsolution case. The super counterpart follows the same way. Without loss of any generality, we consider $(x_0,t_0)=(0,0)$. Particularly, we want to show that there exist $\delta$ and $\eta$ such that
    \begin{align*}
        |\{u(\cdot,0)\leq -\xi\omega+\mu^+\}\cap B_\rho|\geq \alpha |B_\rho|
    \end{align*}
then either
\begin{align*}
    \frac{1}{\delta}\textup{Tail }[(u-\mu^\pm)_\pm;\mathcal{Q}]>\xi\omega
\end{align*}
or
\begin{align*}
    u(\cdot,0)\leq -\eta\xi\omega+\mu^+\;\;\;\mbox{a.e. in } B_{2\rho}\times(\frac{1}{2}\delta(\xi\omega)^{2-p}\rho^{sp},\delta(\xi\omega)^{2-p}\rho^{sp}].
\end{align*}
In Lemma \ref{L3.3}, we compute the measure-theoretical information in a forward time slice. Rewriting that results in a large ball $B_{4\rho}$ along with enforcing
\begin{align*}
   \frac{1}{\delta}\textup{Tail }[(u-\mu^\pm)_\pm;\mathcal{Q}]\leq \xi\omega
\end{align*}
We have for some $\delta,\varepsilon\in (0,1)$ depending only on \texttt{data} and $\alpha$, we get
\begin{align*}
    |\{u(\cdot,t)\leq -\varepsilon\xi\omega+\mu^+\}\cap B_{4\rho}|\geq \frac{\alpha}{2}\mathcal{C}^{-Q}|B_{4\rho}|,\;\;\;\forall\; t\in(0,\delta(\xi\omega)^{2-p}(4\rho)^{sp}]. 
\end{align*} 
This measure-theoretical information allows us to consider the shrinking lemma \ref{L3.4} in the cylinder $(0,\bar{t})+\mathcal{Q}^\ominus_{4\rho}(\delta(\xi\omega)^{2-p})$ with an arbitrary $\bar{t}\in (\delta(\sigma\varepsilon\xi\omega)^{2-p}(4\rho)^{sp},\delta(\xi\omega)^{2-p}(4\rho)^{sp}]$ with replacing $\xi$ and $\alpha$ by $\varepsilon\xi$ and $\frac{1}{2}\mathcal{C}^{-Q}\alpha$ respectively. With this choice of $\bar{t}$ and $\sigma\in (0,1)$, we have the following set inclusion i.e.
\begin{align*}
    (0,\bar{t})+\mathcal{Q}^\ominus_{4\rho}(\frac{1}{2}\delta(\sigma\varepsilon\xi\omega)^{2-p})\subset \mathcal{Q}^\oplus_{4\rho}(\delta(\xi\omega)^{2-p})
\end{align*}
Letting $\iota$ be determined in Lemma \ref{L3.1} in terms of \texttt{data} and $\delta$, further we choose $\sigma$ according to Lemma \ref{L3.4} to satisfy
\begin{align*}
    \gamma\frac{\sigma^{p-1}}{\delta\alpha}<\iota\;\;\mbox{ i.e. } \sigma<\Big(\frac{\iota\delta\alpha}{\gamma}\Big)^\frac{1}{p-1}.
\end{align*}
This choice is possible due to the independence of $\gamma$ on $\sigma$ in Lemma \ref{L3.4}. Letting $\tilde{\gamma}$ be chosen in Lemma \ref{L3.1} and further enforcing 
\begin{align*}
    \max\{4\mathfrak{C},\tilde{\gamma},\frac{1}{\delta}\}\textup{Tail }[(u-\mu^\pm)_\pm;\mathcal{Q}]\leq \sigma\varepsilon\xi\omega
\end{align*}
Due to this choice of $\sigma$, we are allowed to employ Lemma \ref{L3.4} and then Lemma \ref{L3.1} successively in the cylinder $(0,\bar{t})+\mathcal{Q}^\ominus_{4\rho}(\frac{1}{2}\delta(\sigma\varepsilon\xi\omega)^{2-p})$ with an arbitrary $\bar{t}$ as it is chosen earlier, and replacing $\xi$ by $\sigma\varepsilon\xi$, we conclude
\begin{align*}
    u\leq -\frac{1}{4}\sigma\varepsilon\xi\omega+\mu^+\;\;\;\mbox{ a.e. in } B_{2\rho}\times (\frac{1}{2}\delta(\sigma\varepsilon\xi\omega)^{2-p}(4\rho)^{sp},\delta(\xi\omega)^{2-p}(4\rho)^{sp}].
\end{align*}
Relabeling $\tilde{\gamma}$ for $\max\{4\mathfrak{C},\tilde{\gamma},1/\delta\}$ and defining $\eta=\frac{1}{4}\sigma\varepsilon$, we have the result.
\end{proof}
\subsection{Initial Step}\label{InStep}
Without loss of generality, we take $(x_0,t_0)=(0,0)$. Consider $\mathcal{R}>R$ such that $\mathcal{Q}^\ominus_\mathcal{R}\subset \Omega_T$. Introduce
\begin{align}\label{eq4.1}
    \omega\geq 2\esssup_{\mathcal{Q}^\ominus_\mathcal{R}}|u|+\mbox{Tail }(u;\mathcal{Q^\ominus_\mathcal{R}}),
\end{align}
and 
\begin{align*}
    \mathcal{Q}^\ominus_0:=\mathcal{Q}^\ominus_R(\omega^{2-p})
\end{align*}
By the proper choice of $\mathcal{R}$, $\mathcal{Q}^\ominus_0\subset\mathcal{Q}^\ominus_\mathcal{R}\subset \Omega_T$. Now set, 
\begin{align*}
    \mu^+:=\esssup_{\mathcal{Q}^\ominus_0}u,\;\;\; \mu^-:=\essinf_{\mathcal{Q}^\ominus_0} u.
\end{align*}
Then the following intrinsic relation is obvious from (\ref{eq4.1})
\begin{align}\label{eq4.2}
    \essosc_{\mathcal{Q}^\ominus_0}u\leq \omega
\end{align}
This is the starting induction argument to follow. Introduce a smaller cylinder
\begin{align*}
    \tilde{\mathcal{Q}}^\ominus_0:=\mathcal{Q}_{R}^\ominus(\omega^{2-p}c^{sp})\subset \mathcal{Q}^\ominus_0
\end{align*}
for some $c\in (0,1/4)$ to be chosen. Define
\begin{align*}
    \tau:=\delta (\frac{1}{4}\omega)^{2-p}(cR)^{sp}
\end{align*}
where $\delta\in(0,1)$ as in chosen in Lemma \ref{L4.1} with $\alpha=1/2$. \vskip 1mm
If $\mu^+-\mu^-<\frac{1}{2}\omega$, then it will be trivially incorporated in the forthcoming oscillation estimate. Otherwise, if $\mu^+-\mu^-\geq \frac{1}{2}\omega$ holds, then it welcomes one of the following two alternatives.
\begin{align*}
    \begin{cases}
        &|\{u(\cdot,-\tau)-\mu^->\frac{1}{4}\omega\}\cap B_{cR}|\geq \frac{1}{2}|B_{cR}|\\
        & |\{\mu^+-u(\cdot,-\tau)>\frac{1}{4}\omega\}\cap B_{cR}|\geq \frac{1}{2}|B_{cR}|
    \end{cases}
\end{align*}
Let us suppose the second alternative holds, for instance. Then by Lemma \ref{L4.1} with $\xi=\frac{1}{4}$, $\alpha=\frac{1}{2}$ and $\rho=cR$ determines $\eta\in (0,1/2)$, either 
\begin{align*}
    \tilde{\gamma} \mbox{ Tail } [(u-\mu^+)_+;\tilde{\mathcal{Q}}^\ominus_0]>\eta\omega
\end{align*}
or,
\begin{align*}
    \mu^+-u\geq \eta\omega \;\;\;\;\mbox{ a.e. in } \mathcal{Q}^\ominus_{cR}(\frac{1}{2}\delta (\frac{1}{4}\omega)^{2-p})
\end{align*}
In either case, taking (\ref{eq4.2}) into account, we have
\begin{align}\label{eq4.3}
    \essosc_{\mathcal{Q}^\ominus_{cR}(\frac{1}{2}\delta (\frac{1}{4}\omega)^{2-p})}\,u\leq \max\Big\{(1-\eta)\omega,\hat{\gamma}\,\mbox{Tail}[(u-\mu^+)_+;\tilde{\mathcal{Q}}^\ominus_0]\Big\}=:\omega_1
\end{align}
where $\hat{\gamma}:=\tilde{\gamma}/\eta$. Now it is unclear whether, due to the presence of the tail, $\omega_1$ is dominated by $\omega$ or not. Therefore, it is evident to estimate the tail term in such a way that a middle-distance term and a faraway tail term are individually controlled by $\omega$. So the tail estimate
\begin{align*}
  \mbox{Tail }[(u-\mu^+)_+;\mathcal{\tilde{\mathcal{Q}}}^\ominus_0]&=\int_{-\omega^{2-p}(cR)^{sp}}^0\int_{\hn\setminus B_R}\frac{(u-\mu^+)_+^{p-1}}{|y|_{\hn}^{Q+sp}}dydt \\
  &\leq \gamma c^{sp}\omega+ \gamma\int_{-\omega^{2-p}(cR)^{sp}}^0\int_{\hn\setminus B_R}\frac{u_+^{p-1}}{|y|_{\hn}^{Q+sp}}dydt\\
  &= \gamma c^{sp}\omega+ \gamma\int_{-\omega^{2-p}(cR)^{sp}}^0\int_{B_\mathcal{R}\setminus B_R}\frac{u_+^{p-1}}{|y|_{\hn}^{Q+sp}}dydt+\gamma \int_{-\omega^{2-p}(cR)^{sp}}^0\int_{\hn\setminus B_\mathcal{R}}\frac{u_+^{p-1}}{|y|_{\hn}^{Q+sp}}dydt\\
  &\leq \gamma c^{sp}\omega+\gamma \int_{-\omega^{2-p}(cR)^{sp}}^0\int_{\hn\setminus B_\mathcal{R}}\frac{u_+^{p-1}}{|y|_{\hn}^{Q+sp}}dydt\\
  &\leq \gamma \omega.
\end{align*}
In this event of calculations, first and last inequalities come from (\ref{eq4.2}). The second last inequality is due to Lemma \ref{L1.1}. So, we obtain 
\begin{align*}
    \mbox{Tail }[(u-\mu^+)_+;\tilde{\mathcal{Q}}^\ominus_0]\leq \gamma\omega,\;\;\;\;\omega_1\leq \bar{\gamma}\omega
\end{align*}
Next, we introduce $R_1=\lambda R$ for some $\lambda\leq c$ to verify the set inclusion
\begin{align}\label{eq4.4}
    \mathcal{Q}^\ominus_{R_1}(\omega_1^{2-p})\subset \mathcal{Q}^\ominus_{cR}(\frac{1}{2}\delta (\frac{1}{4}\omega)^{2-p})\;\;\;\;\mbox{i.e.},\; \lambda\leq 2^\frac{2p-5}{sp}\Bigg(\frac{1}{1-\eta}\Bigg)^\frac{p-2}{sp}\delta^\frac{1}{sp}c.
\end{align}
As a consequence of this result, along with (\ref{eq4.3}), we have
\begin{align*}
    \mathcal{Q}^\ominus_{R_1}(\omega_1^{2-p})\subset \mathcal{Q}^\ominus_0\;\;\mbox{ and } \esssup_{\mathcal{Q}^\ominus_{R_1}(\omega_1^{2-p})}u\leq \omega_1,
\end{align*}
which plays the role of (\ref{eq4.2}) in the very next stage. At this stage, $c\in(0,1/4)$ is still to be chosen. Note that the calculations will be similar if the first alternative holds instead. In that very case, one needs to replace $(u-\mu^+)_+$ with $(u-\mu^-)_-$ only.  
\subsection{Induction Step} As we have done the primary process in the previous step, we can proceed by induction. Suppose up to $i=1,2,3,...,j$, we have set
\begin{align*}
    \begin{cases}
        &R_0=R,\;\;R_i=\lambda R_{i-1},\;\;(1-\eta)\omega_{i-1}\leq\omega_i\leq \bar{\gamma}\omega_{i-1},\\
        &\omega_0=\omega,\;\;\omega_i=\max\{(1-\eta)\omega_{i-1},\;\;\hat{\gamma}\mbox{ Tail }[(u-\mu^\pm_{i-1})_\pm;\tilde{Q}^\ominus_{i-1}\},\\
        & \mathcal{Q}_i^\ominus=\mathcal{Q}_{R_i}^\ominus(\omega_i^{2-p}),\;\; \tilde{\mathcal{Q}}_i^\ominus=\mathcal{Q}_{R_i}^\ominus(\omega_i^{2-p}c^{sp}),\;\;\mathcal{Q}_i^\ominus\subset\mathcal{Q}^\ominus_{i-1},\\
        &\mu^+_i=\esssup_{\mathcal{Q}^\ominus_i}\;u,\;\;\mu^-_i=\essinf_{\mathcal{Q}^\ominus_i}\;u,\;\;\essosc_{\mathcal{Q}^\ominus_i}\leq \omega_i
    \end{cases}
\end{align*}
The main motive of this subsection is to reduce the oscillation in the next stage. For this purpose, we mainly replicate the process of the Initial Step (\autoref{InStep}) with $\mu_j^\pm,\omega_j, R_j,\mathcal{Q}^\ominus_j, \tilde{\mathcal{Q}}^\ominus_j$. This induction estimate will help the above oscillation continue to hold for the $(j+1)$-th step. Let $\delta$ is fixed and $c\in (0,1)$ to be chosen later. Define
\begin{align*}
    \tau:=\delta(\frac{1}{4}\omega_j)^{2-p}(cR_j)^{sp}
\end{align*}
and consider two alternatives
\begin{align}\label{eq4.5}
    \begin{cases}
        &|\{u(\cdot,-\tau)-\mu_j^->\frac{1}{4}\omega_j\}\cap B_{cR_j}|\geq \frac{1}{2}|B_{cR_j}|\\
        &|\{\mu_j^+-u(\cdot,-\tau)>\frac{1}{4}\omega_j\}\cap B_{cR_j}|\geq \frac{1}{2}|B_{cR_j}|
    \end{cases}
\end{align}
As the initial step, we may assume $\mu_j^+-\mu^-_j\geq \frac{1}{2}\omega_j$, so any one of the alternatives will be true. otherwise, if in the case $\mu_j^+-\mu^-_j< \frac{1}{2}\omega_j$, then it can incorporate into the forthcoming oscillation estimate.\vskip 1mm
Let us suppose that the second alternative holds true. Then apply Lemma \ref{L4.1} with $\mathcal{Q}^\ominus_j$ with $\alpha=1/2,\xi=1/4, \rho=cR_j$ and same $\eta$ as before yields that, either
\begin{align*}
    \tilde{\gamma} \mbox{ Tail }[(u-\mu^+_j)_+;\tilde{\mathcal{Q}}^\ominus_j]>\eta\omega_j.
\end{align*}
or
\begin{align*}
    \mu^+_j-u\geq \eta\omega_j\;\;\;\mbox{a.e. in } \mathcal{Q}_{cR_j}^\ominus(\frac{1}{2}\delta(\frac{1}{4}\omega_j)^{2-p}),
\end{align*}
which gives, thanks to the $j$th induction assumption, i.e $\essosc_{\mathcal{Q}^\ominus_j(\omega_j^{2-p})}u\leq \omega_j$,
\begin{align}\label{eq4.6}
  \essosc_{\mathcal{Q}^\ominus_{cR_j}(\frac{1}{2}\delta(\frac{1}{4}\omega_j)^{2-p})}u\leq \max\Big\{(1-\eta)\omega_j,\hat{\gamma}\,\mbox{Tail}[(u-\mu^+_j)_+;\tilde{\mathcal{Q}}^\ominus_j]\Big\}=:\omega_{j+1},
\end{align}
where $\hat{\gamma}$ as in (\ref{eq4.3}). \vskip 1mm
Here we create the same type of moment as (\ref{eq4.3}), where it is unclear, due to the presence of {\em tail}, whether $\omega_{j+1}$ is controlled by $\omega_j$. For induction purpose it is sufficient to show $\omega_{j+1}\leq \bar{\gamma}\omega_j$ for some $\bar{\gamma}>1$. But this $\bar{\gamma}$ depends on the tail. To make sure $\bar{\gamma}$ should only depend on \texttt{data} and independent of $j$, one need to estimate the tail term as follows:
\begin{align}\label{eq4.7}
    \mbox{ Tail }[(u-\mu^+_j)_+;\tilde{\mathcal{Q}}_j^\ominus]&=\int_{-\omega_j^{2-p}(cR_j)^{sp}}^0\intb[\hn\setminus B_R]{j}\frac{(u-\mu_j^+)_+^{p-1}}{|x|_{\hn}^{Q+sp}}dxdt\nonumber\\
    &=\underbrace{\int_{-\omega_j^{2-p}(cR_j)^{sp}}^0\intb[\hn\setminus B]{R}\frac{(u-\mu_j^+)_+^{p-1}}{|x|_{\hn}^{Q+sp}}dxdt}_{I}\\
    &\quad+\sum_{i=1}^j\underbrace{\int_{-\omega_j^{2-p}(cR_j)^{sp}}^0\intb[B_{R_{j-1}}\setminus B_R]{j}\frac{(u-\mu_j^+)_+^{p-1}}{|x|_{\hn}^{Q+sp}}dxdt}_{II}\nonumber
\end{align}
We estimate $I$ and $II$ separately. 
\begin{align*}
    I&\leq \gamma \omega_j^{2-p}(cR_j)^{sp}\frac{\omega^{p-1}}{R^{sp}}+ \int_{-\omega_j^{2-p}(cR_j)^{sp}}^0\gamma \int_{B_{\mathcal{R}}\setminus B_R}\frac{u^{p-1}_+}{|x|^{Q+sp}_{\hn}}dxdt + \gamma \int_{-\omega_j^{2-p}(cR_j)^{sp}}^0\intb[\hn\setminus B]{\mathcal{R}} \frac{u^{p-1}_+}{|x|^{Q+sp}_{\hn}}dxdt \\
    & \leq \gamma \omega_j^{2-p}(cR_j)^{sp}\frac{\omega^{p-1}}{R^{sp}}+ \gamma \int_{-\omega_j^{2-p}(cR_j)^{sp}}^0\intb[\hn\setminus B]{\mathcal{R}} \frac{u^{p-1}_+}{|x|^{Q+sp}_{\hn}}dxdt.
\end{align*}
In this sequence of calculations, we use $|\mu^+|\leq \omega$ on $\mathcal{Q}^\ominus_\mathcal{R}$ in the first inequality, whereas in the last inequality we use $u_+\leq \omega$ on  $\mathcal{Q}^\ominus_\mathcal{R}$.\vskip 1mm
For $i=1,2,3,...,j$,
\begin{align*}
    (u-\mu_j^+)_+=(\mu^+_j-u)_-\leq \mu^+_j-\mu^-_{i-1}\leq \mu^+_{i-1}-\mu^-_{i-1}\leq \omega_{i-1}\,\,\,\mbox{ a.e. in } \mathcal{Q}^\ominus_{i-1}.
\end{align*}
Consequently, $II$ implies,
\begin{align*}
    II\leq \gamma \omega_j^{2-p}(cR_j)^{sp}\frac{\omega_{i-1}^{p-1}}{R_i^{sp}}
\end{align*}
Combining estimations of $I$ and $II$ into (\ref{eq4.7}), we have the following
\begin{align}\label{eq4.8}
    \mbox{ Tail }[(u-\mu^+_j)_+;\tilde{\mathcal{Q}}^\ominus_j]&\leq \gamma \omega_j^{2-p}(cR_j)^{sp}\frac{\omega^{p-1}}{R^{sp}}+ \gamma \underbrace{\sum_{i=1}^j\omega_j^{2-p}(cR_j)^{sp}\frac{\omega_{i-1}^{p-1}}{R_i^{sp}}}_{III}\nonumber\\
   &\quad +\gamma \underbrace{\int_{-\omega_j^{2-p}(cR_j)^{sp}}^0\intb[\hn\setminus B]{\mathcal{R}} \frac{u^{p-1}_+}{|x|^{Q+sp}_{\hn}}dxdt}_{IV}.
\end{align}
Consider the first two terms on the right-hand side of (\ref{eq4.8}). To estimate these terms, let us recall the definitions of $\omega_j$ and $R_j$, defined at the beginning of the induction, such that for $i=1,2,3,...,j$,
\begin{align}\label{eq4.9}
    (1-\eta)^{j-i}\omega_i\leq\omega_j,\,\,\, R_j=\lambda^{j-i}R_i.
\end{align}
Using the outcomes in (\ref{eq4.9}), $III$ implies,
\begin{align*}
    III\leq \gamma\omega_j\sum_{i=1}^j((1-\eta)^{1-p}c^{sp})^{j-i+1}.
\end{align*}
In this inequality, we used the fact that $\lambda\leq c$. Now we are left with the estimate of $IV$, which can be done as follows using (\ref{eq4.9}) and definition of tail,
\begin{align*}
    IV&\leq \int_{-\omega_j^{2-p}(cR_j)^{sp}}^0\intb[\hn\setminus B]{\mathcal{R}} \frac{(u-\mu^+_{j-1})^{p-1}_++|\mu^+_{j-1}|^{p-1}}{|x|^{Q+sp}_{\hn}}dxdt\\
    &\leq \gamma\omega_j^{2-p}(cR_j)^{sp}\frac{\omega^{p-1}}{R^{sp}}+\int_{-\omega_j^{2-p}(cR_j)^{sp}}^0\intb[\hn\setminus B]{\mathcal{R}} \frac{(u-\mu^+_{j-1})^{p-1}_+}{|x|^{Q+sp}_{\hn}}dxdt\\
    &\leq \gamma\omega_j((1-\eta)^{1-p}c^{sp})^j+\gamma\mbox{ Tail }[(u-\mu^+_{j-1})_-;\tilde{\mathcal{Q}}^\ominus_{j-1}]\\
    &\leq \gamma\omega_j((1-\eta)^{1-p}c^{sp})^j+\frac{\gamma}{\hat{\gamma}}\omega_j
\end{align*}
Combining estimates of $III$ and $IV$ into (4.8), we obtain
\begin{align}\label{eq4.10}
    \mbox{ Tail }[(u-\mu^+_j)_+;\tilde{\mathcal{Q}}^\ominus_j]\leq \gamma\omega_j\sum_{i=1}^j((1-\eta)^{1-p}c^{sp})^{j-i+1}+\frac{\gamma}{\hat{\gamma}}\omega_j.
\end{align}
The summation on the right-hand side of (\ref{eq4.10}) is bounded by if 
\begin{align}\label{eq4.11}
    (1-\eta)^{1-p}c^{sp}<\frac{1}{2}\,\,\,\mbox{ i.e., }\;\; c<\Bigg(\frac{(1-\eta)^{p-1}}{2}\Bigg)^{1/sp}
\end{align}
Therefore, we have
\begin{align}\label{eq4.12*}
    \mbox{ Tail }[(u-\mu_j^+)_+;\tilde{\mathcal{Q}}^\ominus_j]\leq \gamma\omega_j,\;\;\omega_{j+1}\leq \bar{\gamma}\omega_j.
\end{align}
Here, $\bar{\gamma}$ can be different from the previous one in the Initial Step.\vskip 1mm
Let $R_{j+1}=\lambda R_j$ for some $\lambda\in (0,1)$ to verify the set inclusion as done in (\ref{eq4.4})
\begin{align}\label{eq4.12}
    \mathcal{Q}^\ominus_{R_{j+1}}(\omega_{j+1}^{2-p})\subset \mathcal{Q}^\ominus_{cR_j}(\frac{1}{2}\delta (\frac{1}{4}\omega_j)^{2-p})\;\;\;\;\mbox{i.e.},\; \lambda\leq 2^\frac{2p-5}{sp}\Bigg(\frac{1}{1-\eta}\Bigg)^\frac{p-2}{sp}\delta^\frac{1}{sp}c
\end{align}
As a conclusion from the results in (\ref{eq4.12}) and (\ref{eq4.6}), we obtain
\begin{align*}
    \mathcal{Q}^\ominus_{R_{j+1}}(\omega_{j+1}^{2-p})\subset\mathcal{Q}^\ominus_{R_j}(\omega_j^{2-p})\;\;\mbox{ and } \essosc_{\mathcal{Q}^\ominus_{R_{j+1}}(\omega_{j+1}^{2-p})} u\leq \omega_{j+1},
\end{align*}
that concludes the induction argument for the second alternative. The process will be the same if the first alternative in (\ref{eq4.5}) holds. For that case, one needs to proceed with $(u-\mu^-_j)_-$, instead of $(u-\mu^+_j)_+$. Note that the smallness assumption imposed on $c$ in (\ref{eq4.11}) is not the final choice and may change according to the requirements in the sequel.
\subsection{Modulus of Continuity for Singular case}\label{sub4.4} From the previous subsection, concluding through the induction arguments, we obtain the oscillation estimate
\begin{align}\label{eq4.13}
    \essosc_{\mathcal{Q}^\ominus_n}\leq \omega_n=\max\{(1-\eta)\omega_{n-1},\tilde{\gamma}\mbox{ Tail }[(u-\mu^\pm_{n-1})_\pm;\tilde{\mathcal{Q}}^\ominus_{n-1}\},\;\;\;n\in\mathbb{N}
\end{align}
along with the inclusion $\mathcal{Q}^\ominus_n\subset\mathcal{Q}^\ominus_{n-1}$, where $R_n=\lambda^nR,\,\mathcal{Q}^\ominus_n=\mathcal{Q}^\ominus(\omega_n^{2-p})$ for $n\in\mathbb{N}$.\vskip 1mm
Keeping (\ref{eq4.13}) in mind, we consider (\ref{eq4.8}) and (\ref{eq4.10}) (with $j=n-1$). Indeed, we may further need the smallness of $c$, i.e.
\begin{align}\label{eq4.14}
    \gamma\omega_{n-1}\sum_{i=1}^{n-1}((1-\eta)^{1-p}c^{sp})^{n-i}\leq \frac{1}{2\hat{\gamma}}\omega_{n-1}\,\,\,\mbox{ i.e. }\,\,\,c\leq \Bigg(\frac{(1-\eta)^{p-1}}{2\gamma\hat{\gamma}}\Bigg)^\frac{1}{sp}.
\end{align}
Hence, combining this estimate with the definition of $\omega_n$ in (\ref{eq4.13}), we have that
\begin{align*}
    \omega_n&\leq (1-\eta)\omega_{n-1}+\gamma\int_{-\omega_{n-1}^{2-p}(cR_{n-1})^{sp}}^0{\int_{{\hn}\setminus B_\mathcal{R}}}\frac{|u|^{p-1}}{|x|_{\hn}^{Q+sp}}\,dxdt\\
    &\leq (1-\eta)\omega_{n-1}+\gamma\int_{-\omega^{2-p}(cR)^{sp}}^0{\int_{{\hn}\setminus B_\mathcal{R}}}\frac{|u|^{p-1}}{|x|_{\hn}^{Q+sp}}\,dxdt.
\end{align*}
Iterating the above result, we have the following result
\begin{align}\label{eq4.15}
    \essosc_{\mathcal{Q}^\ominus_n}u\leq \omega_n\leq (1-\eta)^n\omega+\gamma\int_{-\omega^{2-p}R^{sp}}^0\intb[\hn\setminus B]{\mathcal{R}}\frac{|u|^{p-1}}{|x|^{Q+sp}_{\hn}}dxdt.
\end{align}
From the result in (\ref{eq4.15}) and assumption in (\ref{eq4.1}), we can obtain $\omega_n\leq \gamma^*\omega$ for taking $\gamma^*=1+\gamma$. \vskip 1mm 
Since, $\{\omega_n^{2-p}R_n^{sp}\}$ is decreasing sequence, tending to $0$, then for some fixed $r\in (0,R)$, there exist some $n\in\mathbb{N}$ yields that
\begin{align*}
    \omega_{n+1}^{2-p}R^{sp}_{n+1}\leq \omega^{2-p}r^{sp}<\omega_n^{2-p}R^{sp}_n.
\end{align*}
The right-hand side, along with the inequality $\omega_n\leq \gamma^*\omega$, implies 
\begin{align*}
    \sigma r<R_n\;\;\;\mbox{ and }\;\;\; \mathcal{Q}^\ominus_{\sigma r}(\omega^{2-p)}\subset\mathcal{Q}^\ominus_n\;\;\;\mbox{ for } \sigma={(\gamma^*)}^{{2-p}/{sp}},
\end{align*}
whereas the left-hand side inequality, along with the iterative term $(1-\eta)\omega_n\leq\omega_{n+1}$, such that
\begin{align*}
    \omega^{2-p}r^{sp}\geq  \omega_{n+1}^{2-p}R^{sp}_{n+1}\geq ((1-\eta)^{2-p}\lambda^{sp})^{n+1}\omega^{2-p}R^{sp}
\end{align*}
consequently,
\begin{align*}
    ((1-\eta)^{2-p}\lambda^{sp})^{n+1}\leq (r/R)^{sp}
\end{align*}
this implies
\begin{align}
    (1-\eta)^{n+1}\leq (r/R)^\mathfrak{b},\;\;\;\mbox{ where } \mathfrak{b}:=\frac{sp\mbox{ ln}(1-\eta)}{\mbox{ ln}((1-\eta)^{p-2}\lambda^{sp})}
\end{align}
Note that, depending upon the choice of a smaller $c$ between (\ref{eq4.11}) and (\ref{eq4.14}), we shall consider the smaller $\lambda$ from (\ref{eq4.12}). \vskip 1mm
Collecting all the estimates and inserting them into (\ref{eq4.15}), we have
\begin{align}\label{eq4.17}
    \essosc_{\mathcal{Q}^\ominus_{\sigma r}(\omega^{2-p})}u\leq 2\omega(r/R)^\mathfrak{b}+\gamma\int_{-\omega^{2-p}R^{sp}}^0\intb[\hn\setminus B]{\mathcal{R}}\frac{|u|^{p-1}}{|x|^{Q+sp}_{\hn}}dxdt
\end{align}
\section{Degenerate Case: $p>2$}\label{S5}
\subsection{Initial Step}
Without loss of generality, we take $(x_0,t_0)=(0,0)$. Consider $\mathcal{R}>R$ such that $\mathcal{Q}^\ominus_\mathcal{R}\subset\Omega_T$. Introduce
\begin{align}\label{eq5.1}
    \omega\geq 2\esssup_{\mathcal{Q}^\ominus_\mathcal{R}}|u|+\mbox{ Tail } (u;\mathcal{Q}^\ominus_\mathcal{R})
\end{align}
and
\begin{align*}
    \mathcal{Q}^\ominus_0:=\mathcal{Q}_R^\ominus(\mathcal{L}\theta),
\end{align*}
where $\mathcal{L}>R$ and $\theta:=(\frac{1}{4}\omega)^{2-p}$. By the proper choice of $\mathcal{R}$, $\mathcal{Q}^\ominus_0\subset\mathcal{Q}_\mathcal{R}\subset\Omega_T$. Set,
\begin{align*}
    \mu^+:=\esssup_{\mathcal{Q}^\ominus_0}u,\;\;\; \mu^-:=\essinf_{\mathcal{Q}^\ominus_0} u.
\end{align*}
The  following intrinsic relation is obvious from (\ref{eq5.1})
\begin{align}
    \esssup_{\mathcal{Q}^\ominus_0}\leq \omega.
\end{align}
This is the very inequality where the starting argument of induction takes place. Introduce a smaller cylinder
\begin{align*}
    \tilde{\mathcal{Q}}_0^\ominus:=\mathcal{Q}^\ominus_R(\mathcal{L}\theta c^{sp})\subset\mathcal{Q}_0^\ominus
\end{align*}
for some $c\in (0,1/4)$. The numbers $c$ and $\mathcal{L}$ will be chosen in terms of \texttt{data}. \vskip 1mm
The further process in this section is slightly different from the previous section. Here, we unfold two alternatives in two different subsections. as done in \cite[Sections 5.2 and 5.2]{Lia24_2}. The concept of underlying intrinsic scaling is derived from DiBenedetto's work on the Parabolic $p$-Laplace Equation \cite[Chapter III]{DiB93_4}.
\subsection{The First Alternative}
In this section, we mainly deal with $u$ as a supersolution near its infimum.  We consider, without loss of any generality
\begin{align}\label{eq5.3}
    \mu^+-\mu^->\frac{1}{2}\omega.
\end{align}
The other case is the trivial argument of oscillation reduction. \vskip 1mm
Let we consider $\bar{t}\in (-(\mathcal{L}-1)\theta(cR)^{sp},0]$ and at that time level suppose the following holds
\begin{align}\label{eq5.4}
    |\{u\leq \mu^-+\frac{1}{4}\omega\}\cap(0,\bar{t})+\mathcal{Q}_{cR}^\ominus(\theta)|\leq \nu|\mathcal{Q}_{cR}^\ominus(\theta)|,
\end{align}
where the constant $\nu$ is chosen as in De Giorgi Lemma \ref{L3.1} (with $\delta=1$). Now using Lemma \ref{L3.1} with $\delta=1, \rho=cR$ and $\xi=1/4$, we have either
\begin{align}\label{eq5.5}
    \tilde{\gamma}\;\mbox{ Tail }[(u-\mu^-)_-;\tilde{\mathcal{Q}}^\ominus_0]>\frac{\omega}{4}
\end{align}
or
\begin{align}\label{eq5.6}
    u\geq \mu^-+\frac{\omega}{8}\;\;\;\mbox{ a.e. in } (0,\bar{t})+\mathcal{Q}^\ominus_{\frac{1}{2}cR}(\theta)
\end{align}
Next, we want to exploit the fact at the time level, 
\begin{align*}
    t^*=\bar{t}-\theta(\frac{1}{2}cR)^{sp}
\end{align*}
$u$ is above the level $\mu^-+1/4$ pointwise in the cube $B_\frac{cR}{4}$. For this purpose, considering the fact in (\ref{eq5.6}), use the Lemma \ref{L3.2} with $\rho=\frac{1}{2}cR$ to obtain that, for some arbitrary parameter $\xi_0\in (0,1/8)$, either
\begin{align}\label{eq5.7}
    \tilde{\gamma}\;\mbox{ Tail }[(u-\mu^-)_-;\tilde{\mathcal{Q}}^\ominus_0]>\xi_o\omega
\end{align}
or
\begin{align}\label{eq5.8}
    u\geq \mu^-+\frac{\xi_0\omega}{4}\;\;\;\mbox{ a.e. in } B_{\frac{cR}{4}}\times(t^*,t^*+\nu_0(\xi_0\omega)^{2-p}(\frac{1}{2}cR)^{sp}].
\end{align}
where we choose the numbers $\xi_0$ to satisfy
\begin{align}
    \nu_0(\xi_0\omega)^{2-p}(\frac{1}{2}cR)^{sp}\geq \mathcal{L}(\frac{1}{4}\omega)^{2-p}(cR)^{sp}\implies\xi_o\geq \frac{1}{4}\Big(\frac{\nu_0}{2^{sp}\mathcal{L}}\Big)^\frac{1}{p-2}
\end{align}
If both (\ref{eq5.5}) and (\ref{eq5.7}) do not hold, then the previous inequality implies that (\ref{eq5.8}) holds up to $t=0$ and yields that
\begin{align}\label{eq5.10}
    \essosc_{\mathcal{Q}_{\frac{cR}{4}}^\ominus(\theta)}\leq (1-\xi_0/4)\omega.
\end{align}
Note that (\ref{eq5.10}) is true while (\ref{eq5.5}) and (\ref{eq5.7}) do not true. In contrast, if one of them occurs, then such case we need to take in consideration in forthcoming oscillation estimate (\ref{eq5.21}). Moreover, $\mathcal{L}$ is yet to determined in the form \texttt{data}. 
\subsection{The Second Alternative}
This subsection is devoted to dealing with $u$ as a subsolution near its supremum. We assume that condition (\ref{eq5.4}) is violated, i.e. if the infimum of $u$ is tending nearer to its infimum, then for any time level $\bar{t}\in (-(\mathcal{L}-1)\theta(cR)^{sp},0]$, the following holds
\begin{align*}
    |\{u\geq \mu^+-\frac{1}{4}\omega\}\cap(0,\bar{t})+\mathcal{Q}_{cR}^\ominus(\theta)|\leq (1-\nu)|\mathcal{Q}_{cR}^\ominus(\theta)|
\end{align*}
that is
\begin{align}\label{eq5.11}
     |\{u\leq \mu^+-\frac{1}{4}\omega\}\cap(0,\bar{t})+\mathcal{Q}_{cR}^\ominus(\theta)|> \nu|\mathcal{Q}_{cR}^\ominus(\theta)|
\end{align}
This happened due to (\ref{eq5.3}).\vskip 1mm
In the view of (\ref{eq5.11}), we want to measure the portion of the ball $B_{cR}$ where $u$ moves away from its supremum in a certain time level. For this purpose, we can have some $t^*\in [\bar{t}-\theta(cR)^{sp},\bar{t}-\frac{1}{2}\nu\theta(cR)^{sp}]$ satisfying
\begin{align}\label{eq5.12}
    |\{u(\cdot,t^*)\leq \mu^+-\frac{1}{4}\omega\}\cap B_{cR}|> \frac{1}{2}\nu|B_{cR}|
\end{align}
If (\ref{eq5.12}) is not true, then for any time level $t^*$
\begin{align*}
    |\{u\leq \mu^+-\frac{1}{4}\omega\}\cap(0,\bar{t})+\mathcal{Q}_{cR}^\ominus(\theta)|&=\int_{\bar{t}-\theta(cR)^{sp}}^{\bar{t}-\frac{1}{2}\nu\theta(cR)^{sp}}|\{u(\cdot,s)\leq \mu^+-\frac{1}{4}\omega\}\cap B_{cR}|\;ds\\
    &\quad +\int_{\bar{t}-\frac{1}{2}\nu\theta(cR)^{sp}}^{\bar{t}} |\{u(\cdot,s)\leq \mu^+-\frac{1}{4}\omega\}\cap B_{cR}|\;ds\\
    &<\frac{1}{2}\nu|B_{cR}|\theta(cR)^{sp}(1-\frac{1}{2}\nu)+\frac{1}{2}\nu\theta(cR)^{sp}|B_{cR}|\\
    &<\nu|\mathcal{Q}^\ominus_{cR}(\theta)|,
\end{align*}
which creates a contradiction. \vskip 1mm
The measure theoretical information in (\ref{eq5.12}) asserts that $u$ moves away from its supremum in a significant portion of $B_{cR}$ at some certain time level $t^*$. Next, we demonstrate that it indeed occurs within a specific time interval. For this reason, using the result in (\ref{eq5.12}), we may apply Lemma \ref{L3.3} with $\alpha=\nu/2$ and $\rho=cR$, such that free parameter $\xi_1\in(0,1/4)$, either
\begin{align}\label{eq5.13}
    \tilde{\gamma}\mbox{ Tail }[(u-\mu^+)_+;\tilde{\mathcal{Q}}^\ominus_0]>\xi_1\omega
\end{align}
or
\begin{align}\label{eq5.14}
    |\{\mu^+-u(\cdot,t)\geq\varepsilon\xi_1\omega\}\cap B_{cR}|\geq \frac{\alpha}{2}|B_{cR}|\;\;\mbox{ for all }\;\; t\in(t^*,t^*+\delta(\xi_1\omega)^{2-p}(cR)^{sp}]  
\end{align}
where $\delta$ and $\varepsilon$ is depended on \texttt{data} and $\nu$ and $\xi_1$ is chosen to satisfy
\begin{align*}
    \delta(\xi_1\omega)^{2-p}(cR)^{sp}\geq \theta(cR)^{sp},\;\;\mbox{ that is },\;\;\xi_1\geq \frac{1}{4}\delta^\frac{1}{p-2}.
\end{align*}
This choice ensures that (\ref{eq5.14}) holds at all time levels near the top of the cylinder, i.e. (\ref{eq5.14}) yields 
\begin{align}\label{eq5.15}
    |\{\mu^+-u(\cdot,t)\geq \epsilon\xi_1\omega\}\cap B_{cR}|\geq \frac{\alpha}{2}|B_{cR}|\;\;\mbox{ for all }\;\;t\in (-(\mathcal{L}-1)\theta(cR)^{sp},0],
\end{align}
Using the result in (\ref{eq5.15}), we want to use the Shrinking Lemma \ref{L3.4} with $\xi=\varepsilon\xi_1,\delta=1$ and $\rho=cR$ next. First fix $\nu$ as in Lemma \ref{L3.1} with $\delta=1$ and then choose $\sigma\in (0,1/2)$ to verify
\begin{align*}
    \gamma\frac{\sigma^{p-1}}{\alpha}\leq \nu
\end{align*}
This choice is true because $\lambda$ of Lemma \ref{L3.4} is independent of $\sigma$. Then, L is determined by
\begin{align}\label{eq5.16}
    (\mathcal{L}-1)\theta(cR)^{sp}\geq (\sigma\varepsilon\xi_1\omega)^{2-p}(cR)^{sp},\;\;\;\mbox{ that is },\;\;\; \mathcal{L}\geq 1+(4\sigma\varepsilon\xi_1)^{2-p}.
\end{align}
where $\mathcal{L}$ is depending on \texttt{data} as $\sigma,\varepsilon,\xi_1$ are depending on \texttt{data}. So, the result in (\ref{eq5.15}) becomes 
\begin{align*}
    |\{\mu^+-u(\cdot,t)\geq \epsilon\xi_1\omega\}\cap B_{cR}|\geq \alpha|B_{cR}|\;\;\mbox{ for all }\;\;t\in (-\Theta (cR)^{sp},0],
\end{align*}
where $\Theta:= (\sigma\varepsilon\xi_1\omega)^{2-p}$. This inequality allows us to use the Lemma \ref{L3.4} that is 
\begin{align}\label{eq5.17}
    \tilde{\gamma}[\mbox{ Tail } (u-\mu^+)_+; \tilde{\mathcal{Q}}_0^\ominus]>\sigma\varepsilon\xi_1\omega
\end{align}
or
\begin{align}\label{eq5.18}
    |\{\mu^+-u\leq \sigma\varepsilon\xi_1\omega\}\cap\mathcal{Q}_{cR}^\ominus(\Theta)|\leq \nu|\mathcal{Q}_{cR}^\ominus(\Theta)|.
\end{align}
This inequality (\ref{eq5.18}) allows us to use Lemma \ref{L3.1} with $\delta=1$ and then we have
\begin{align*}
    \mu^+-u\geq \frac{1}{4}\sigma\varepsilon\xi_1\omega\;\;\;\mbox{ a.e. in } \mathcal{Q}^\ominus_{\frac{cR}{2}}(\Theta)
\end{align*}
which gives the reduction of oscillation
\begin{align}\label{eq5.19}
    \essosc_{\mathcal{Q}^\ominus_{\frac{cR}{2}}(\Theta)}u\leq (1-\frac{1}{4}\sigma\varepsilon\xi_1)\omega.
\end{align}\vskip 6mm
Assuming (\ref{eq5.5}, \ref{eq5.7}, \ref{eq5.13}) and (\ref{eq5.17}) not being true, we have the reduction of oscillation by combining (\ref{eq5.10}) and (\ref{eq5.19}) that is 
\begin{align}
    \essosc_{\mathcal{Q}^\ominus_{\frac{cR}{4}}(\theta)}\leq (1-\eta)\omega,
\end{align}
where $\eta=\min\{\frac{\epsilon_0}{4},\frac{1}{4}\sigma\varepsilon\xi_1\}$. Therefore, combining all the cases, we have
\begin{align}\label{eq5.21}
    \essosc_{\mathcal{Q}^\ominus_{\frac{cR}{4}}(\theta)}\leq \max\{(1-\eta)\omega,\hat{\gamma}\mbox{ Tail } [(u-\mu^\pm)_\pm;\tilde{\mathcal{Q}_0^\ominus}]\}=:\omega_1
\end{align}
where $\hat{\gamma}=\hat{\gamma}(\texttt{data})$.\vskip 1mm
Next, like the singular case, due to the presence of the tail term, it is unclear whether $\omega_1$ is controlled by $\omega$. That's why it is inevitable to estimate the tail term like in the previous singular case, that is, considering the positive truncated function, for instance, 
\begin{align*}
    \mbox{ Tail }[(u-\mu^+)_+;\tilde{\mathcal{Q}}_0^\ominus]&=\int_{-\mathcal{L}\theta(cR)^{sp}}^0\intb[\hn\setminus B]{R}\frac{(u-\mu^+)_+^{p-1}}{|x|_{\hn}^{Q+sp}}dxdt\\
    &\leq \gamma c^{sp}\omega+\gamma \int_{-\mathcal{L}\theta(cR)^{sp}}^0\intb[\hn\setminus B]{\mathcal{R}}\frac{u_+^{p-1}}{|x|_{\hn}^{Q+sp}}dxdt
\end{align*}
This inequality follows from the same process in the singular counterpart. Next, the last integral can be bounded by $\omega$ due to the assumption in (\ref{eq5.1}) and also from (\ref{eq5.21} we can write the following
\begin{align*}
    \mbox{ Tail } [(u-\mu^\pm)_\pm;\tilde{\mathcal{Q}_0^\ominus}]\leq \gamma\omega,\;\;\;\omega_1\leq \bar{\gamma}\omega
\end{align*}
for some $\gamma$ and $\gamma_1(>1)$ depending only on \texttt{data}. \vskip  1mm
Now, set $\theta_1=(\frac{1}{4}\omega_1)^{2-p}$ and $R_1=\lambda R$. We choose $\lambda$ in such a way that the following inclusion
\begin{align}\label{eq5.22}
    \mathcal{Q}^\ominus_{R_1}(\mathcal{L}\theta_1)\subset\mathcal{Q}^\ominus_{\frac{1}{4}cR}(\theta)\implies\lambda\leq \frac{1}{4}c\mathcal{L}^{-\frac{1}{sp}}(1-\eta)^\frac{p-2}{sp}
\end{align}
holds to initiate the induction. Next, combining (\ref{eq5.21}) and (\ref{eq5.22}), we have
\begin{align}
    \mathcal{Q}^\ominus_{R_1}(\mathcal{L}\theta_1)\subset\mathcal{Q}^\ominus_R(\mathcal{L}\theta)\;\;\;\mbox{ and }\;\;\;\essosc_{\mathcal{Q}^\ominus_{R_1}(\mathcal{L}\theta_1)}\leq \omega_1.
\end{align}
This is the initial step of the induction for the next stage. On the other hand, due to the $\mathcal{L}$, above results yields
\begin{align*}
    \essosc_{\mathcal{Q}^\ominus_{R_1}(\omega_1^{2-p})}\leq \omega_1.
\end{align*}
In this subsection, we determine $\mathcal{L}$ (see \ref{eq5.16}) in terms of \texttt{data} which was due in the previous section. But determination of $c$ is yet to be done. 
\subsection{The Induction}
We now use the method of induction. The setup for induction is similar to the previous setup in the singular case. Suppose for $i=1,2,3,...,j$, we construct
\begin{align*}
    \begin{cases}
        &R_0=R,\,R_1=\lambda R_{i-1}, \,\theta_1=(\frac{1}{4}\omega_1)^{2-p}, \, (1-\eta)\omega_{i-1}\leq \omega_i\leq \bar{\gamma}\omega_{i-1}\,\mbox{ with }\omega_0=\omega,\\
        &\omega_i=\max\{(1-\eta)\omega_{i-1},\,\hat{\gamma}\mbox{ Tail } [(u-\mu^\pm_{i-1})_\pm;\tilde{\mathcal{Q}}^\ominus_{i-1}]\}\;\mbox{ with } \tilde{\mathcal{Q}}^\ominus_i=\mathcal{Q}^\ominus_{R_i}(\mathcal{L}\theta_ic^{sp}),0], \\
        & \mathcal{Q}^\ominus=\mathcal{Q}^\ominus_{R_i}(\mathcal{L}\theta_i),\,\mathcal{Q}_i^\ominus\subset\mathcal{Q}_{i-1}^\ominus,\, \mu^+_i= \esssup_{\mathcal{Q}^\ominus_i}u,\, \mu^-=\essinf_{\mathcal{Q}^\ominus_i}u,\,\essosc_{\mathcal{Q}^\ominus_i}\leq \omega_i.
    \end{cases}
\end{align*}
Further, like in the Singular case, after repeating previous all arguments, for the sake of induction, we assume that the reduction of oscillation is valid up to the $j$-th step, that is, 
\begin{align*}
    \essosc_{\mathcal{Q}^\ominus_{\frac{1}{4}cR_j}(\theta_j)}u\leq (1-\eta)\omega_j,
\end{align*}
provided
\begin{align*}
    \hat{\gamma}\mbox{ Tail }[(u-\mu_j^\pm)_\pm;\tilde{\mathcal{Q}}_j^\ominus]\leq \omega_j.
\end{align*}
This tail estimate can be achieved after the same computation which lead to achieve the tail estimate in (\ref{eq4.12*}) as the process does not care about if $p<2$ or $p>2$ except one change that is replacement of $\omega_j^{p-2}$ with $\mathcal{L}(\frac{1}{4}\omega_j)^{2-p}$. In either case, we have
\begin{align}\label{eq5.24}
    \essosc_{\mathcal{Q}^\ominus_{\frac{1}{4}cR_j}(\theta_j)}u\leq \max\{(1-\eta)\omega_j,\hat{\gamma}\mbox{ Tail }[(u-\mu^\pm_j)_\pm;\tilde{\mathcal{Q}}^\ominus_j]\}:=\omega_{j+1}.
\end{align}
Next, set $\theta_{j+1}=(\frac{1}{4}\omega_{j+1})^{2-p}$ and $R_{j+1}=\lambda R_j$. It is obvious from above that $(1-\eta)\omega_j\leq \omega_{j+1}$. So the set inclusion
\begin{align}
    \mathcal{Q}_{R_{j+1}}^\ominus(\mathcal{L}\theta_{j+1})\subset \mathcal{Q}^\ominus_{\frac{1}{4}cR_j}(\theta_j),\;\;\;\mbox{ provided } \lambda\leq \frac{1}{4}c\mathcal{L}^{-\frac{1}{sp}}(1-\eta)^\frac{p-2}{sp}
\end{align}
which lead to, with the help of (\ref{eq5.24}), implies
\begin{align}
    \mathcal{Q}_{R_{j+1}}^\ominus(\mathcal{L}\theta_{j+1})\subset \mathcal{Q}_{R_{j}}^\ominus(\mathcal{L}\theta_{j})\;\;\;\mbox{ and } \essosc_{\mathcal{Q}_{R_{j+1}}^\ominus(\mathcal{L}\theta_{j+1})}u\leq \omega_{j+1},
\end{align}
this implies the following
\begin{align*}
   \essosc_{\mathcal{Q}_{R_{j+1}}^\ominus(\omega^{2-p}_{j+1})}u\leq \omega_{j+1}, 
\end{align*}
and thus we deduce the induction argument. The final choice of $c$ will be done in the next subsection. Also use of the estimation $\omega_i\leq \bar{\gamma}\omega_{i-1}$ will be done in forthcoming subsection. 
\subsection{Modulus of continuity for Degenerate Case}
By the of the induction in the last subsection, we have the oscillation estimate, for all $n\in\mathbb{N}$,
\begin{align}\label{eq5.27}
    \essosc_{\mathcal{Q}^\ominus_{R_n}(\omega_n^{2-p})}\leq \essosc_{\mathcal{Q}^\ominus_{\frac{1}{4}cR_n}(\theta_{n-1})}\leq \omega_n=\max\{(1-\eta)\omega_{n-1}, \hat{\gamma}\mbox{ Tail }[(u-\mu^\pm_{n-1})_\pm;\tilde{\mathcal{Q}}^\ominus_{n-1}]\}
\end{align}
with $\theta_n=(\frac{1}{4}\omega_n)^{2-p},\mathcal{Q}^\ominus_n=\mathcal{Q}^\ominus_{R_n}(\mathcal{L}\theta_n)$ such that $\mathcal{Q}^\ominus_n\subset\mathcal{Q}^\ominus_{n-1}$ for $n\in\mathbb{N}$. \vskip 1mm
Deriving an explicit modulus of continuity encoded in this oscillation estimate is similar to the subsection \ref{sub4.4}. Rest of this section we just only mention the main differences and briefly discuss the procedure. \vskip 1mm
The $(n-1)$th tail estimate with positive truncation
\begin{align}\label{eq5.28}
    \mbox{Tail }[(u-\mu^+)_+;\tilde{\mathcal{Q}}^\ominus_{n-1}]&\leq \gamma\omega_{n-1}\sum_{i=1}^{n-1}((1-\eta)^{1-p}c^{sp})^{n-i}+\gamma\int_{-\mathcal{L}\theta_{n-1}(cR_{n-1})^{sp}}^0\int_{\hn\setminus B_\mathcal{R}}\frac{u_+^{p-1}}{|x|_{\hn}^{Q+sp}}dxdt\nonumber\\
    &\leq\gamma\omega_{n-1}\sum_{i=1}^{n-1}((1-\eta)^{1-p}c^{sp})^{n-i}+\frac{\gamma}{\hat{\gamma}}\omega_{n-1}.
\end{align}
The first term on the right-hand side of (\ref{eq5.28}) can be dominated by $\frac{1}{2\hat{\gamma}}\omega_{n-1}$ is we choose 
\begin{align}\label{eq5.29}
    c\leq \Bigg(\frac{(1-\eta)^{p-1}}{2\gamma\hat{\gamma}}\Bigg)^\frac{1}{sp}
\end{align}
Hence, the right-hand side of (\ref{eq5.28}) is bounded by $\gamma\omega_{n-1}$ and by the construction of $\omega_n$ in (\ref{eq5.27}), we have
\begin{align}\label{eq5.30}
    \omega_n\leq \bar{\gamma}\omega_{n-1}
\end{align}
As we know by induction, $\theta_{n-1}R^{sp}_{n-1}\leq \theta R^{sp}$ then
\begin{align*}
    \mathcal{L}\theta_{n-1}(cR_{n-1})^{sp}\leq  \mathcal{L}c^{sp}\theta R^{sp}\leq \omega^{2-p}R^{sp}
\end{align*}
if we impose on $c$ that
\begin{align}\label{eq5.31}
    c\leq 2^\frac{4-2p}{sp}\mathcal{L}^\frac{-1}{sp}.
\end{align}
Under this restriction on $c$, we can further re-estimate the tail in (\ref{eq5.28}) in the new extended time interval that is 
\begin{align*}
   \mbox{Tail }[(u-\mu^+)_+;\tilde{\mathcal{Q}}^\ominus_{n-1}]&\leq \gamma\omega_{n-1}\sum_{i=1}^{n-1}((1-\eta)^{1-p}c^{sp})^{n-i}+\gamma\int_{-\omega^{2-p}R^{sp}}^0\int_{\hn\setminus B_\mathcal{R}}\frac{u_+^{p-1}}{|x|_{\hn}^{Q+sp}}dxdt.
\end{align*}
Next, like Singular case, iterating the previous estimate and joining it up with the oscillation estimate (\ref{eq5.27}), we have the following
\begin{align}\label{eq5.32}
  \essosc_{\mathcal{Q}^\ominus_{R_n}(\omega_n^{2-p})}u\leq (1-\eta)^n\omega+\gamma\int_{-\omega^{2-p}R^{sp}}^0\int_{\hn\setminus B_\mathcal{R}}\frac{u_+^{p-1}}{|x|_{\hn}^{Q+sp}}dxdt.
\end{align}
Analogous to (\ref{eq4.15}) we can obtain $\omega_n\leq \gamma^*\omega$ due to (\ref{eq5.1}). From previous estimate  Similar to the Singular section, fix some $r\in (0,R)$, there must be some $n\in\mathbb{N}$ such that
\begin{align*}
    \omega_{n+1}^{2-p}R^{sp}_{n+1}\leq \omega^{2-p}r^{sp}<\omega_n^{2-p}R^{sp}_n. 
\end{align*}
as the sequence $\{\omega_n^{2-p}R_n^{sp}\}$ tends to $0$. Next, the right-hand sid of the previous inequality implies that
\begin{align*}
    \sigma r<R_n\;\;\mbox{ and }\;\;\mathcal{Q}_{\sigma r}^\ominus(\omega^{2-p})\subset\mathcal{Q}_{R_n}^\ominus(\omega_n^{2-p})\;\;\mbox{ for }\sigma={\gamma^*}^\frac{2-p}{sp}
\end{align*}
whereas the left-hand side yields, with iterating (\ref{eq5.30}) the following
\begin{align*}
    (\bar{\gamma}^{2-p}\lambda^{sp})^{n+1}\omega^{2-p}R^{sp}\leq \omega_{n+1}^{2-p}R^{sp}_{n+1}\leq \omega^{2-p}r^{sp}.
\end{align*}
This implies
\begin{align*}
    \Big(\frac{r}{R}\Big)^{sp}\geq (\bar{\gamma}^{2-p}\lambda^{sp})^{n+1}\implies (1-\eta)^{n+1}\leq \Big(\frac{r}{R}\Big)^\mathfrak{b}
\end{align*}
where $$\mathfrak{b}:=\frac{sp\mbox{ ln }(1-\eta)}{\mbox{ ln}(\bar{\gamma}^{2-p}\lambda^{sp})}.$$
Moreover, iterating $(1-\eta)\omega_{n-1}\leq\omega_n$, we can have the following estimate
\begin{align*}
    \omega_n^{2-p}R_n^{sp}\leq ((1-\eta)^{2-p}\lambda^{sp})^n\omega^{2-p}R^{sp}\leq (1-\eta)^n\omega^{2-p}R^{sp},
\end{align*}
under the requirements
\begin{align}\label{eq5.33}
    \lambda<(1-\eta)^\frac{p-1}{sp}.
\end{align}
We choose the final $\lambda$ as the smaller between (\ref{eq5.22}) and (\ref{eq5.33}), once the choice of $c$ is done to be the smaller between (\ref{eq5.29}) and 
(\ref{eq5.31}).\vskip 1mm
Finally, collecting all the estimates in (\ref{eq5.32}), we have for all $r\in (0,R)$,
\begin{align}\label{eq5.34}
    \essosc_{\mathcal{Q}^\ominus_{\sigma r}(\omega^{2-p})}u\leq 2\omega(r/R)^\mathfrak{b}+\gamma\int_{-\omega^{2-p}R^{sp}}^0\intb[\hn\setminus B]{\mathcal{R}}\frac{|u|^{p-1}}{|x|^{Q+sp}_{\hn}}dxdt
\end{align}\vskip 10mm
Next, without loss of any generality, we may assume both oscillation estimations in (\ref{eq4.17}) and (\ref{eq5.34}) hold with the replacement of $R$ with some $\mathfrak{R}\in (r,R)$. Then, by taking $\mathfrak{R}=(rR)^\frac{1}{2}$, we obtain
\begin{align}\tag{GenCont}\label{GenCont}
   \essosc_{\mathcal{Q}^\ominus_{\sigma r}(\omega^{2-p})}u\leq 2\omega(r/R)^\mathfrak{b}+\gamma\int_{-\omega^{2-p}{(rR)}^\frac{sp}{2}}^0\intb[\hn\setminus B]{\mathcal{R}}\frac{|u|^{p-1}}{|x|^{Q+sp}_{\hn}}dxdt 
\end{align}
where $\mathfrak{b}:=\min\Big\{\frac{sp\mbox{ ln}(1-\eta)}{\mbox{ ln}((1-\eta)^{p-2}\lambda^{sp})},\frac{sp\mbox{ ln }(1-\eta)}{\mbox{ ln}(\bar{\gamma}^{2-p}\lambda^{sp})}\Big\}.$
\section{Proof of the Main Result}\label{S6}
Without any loss of generality we take $(x_0,t_0)=(0,0)$. Define 
\begin{align}\label{eq6.1}
    \omega:= 2\esssup_{\mathcal{Q}_\mathcal{R}^\ominus}|u|+\Bigg(\fint_{-\mathcal{R}^{sp}}^0\Big(\mathcal{R}^{sp} \intb[\hn\setminus B]{\mathcal{R}}\frac{|u|^{p-1}}{|x|_{\hn}^{Q+sp}}dx\Big)^{1+\varepsilon}dt\Bigg)^\frac{1}{1+\varepsilon}
\end{align}
where $\mathcal{Q}^\ominus_\mathcal{R}\subset \Omega_T$. It is obvious that by defining (\ref{eq6.1}), conditions in (\ref{eq4.1}) and (\ref{eq5.1}) remain intact. Therefore, by both sub and super critical cases that is considering (\ref{GenCont}) also holds that is
\begin{align}\label{eq6.2}
    \essosc_{\mathcal{Q}^\ominus_{\sigma r}(\omega^{2-p})}u\leq 2\omega(r/R)^\mathfrak{b}+\gamma\int_{-\omega^{2-p}{(rR)}^\frac{sp}{2}}^0\intb[\hn\setminus B]{\mathcal{R}}\frac{|u|^{p-1}}{|x|^{Q+sp}_{\hn}}dxdt 
\end{align}
for any $0<r<R$. Next, using H\"older inequality in the second term on the right hand side of (\ref{eq6.2}), we get
\begin{align}\label{eq6.3}
    \int_{-\omega^{2-p}{(rR)}^\frac{sp}{2}}^0\intb[\hn\setminus B]{\mathcal{R}}\frac{|u|^{p-1}}{|x|^{Q+sp}_{\hn}}dxdt&\leq (\omega^{2-p}{(rR)}^\frac{sp}{2})^\frac{\varepsilon}{1+\varepsilon}\Bigg(\int_{-\omega^{2-p}{(rR)}^\frac{sp}{2}}^0\Big(\intb[\hn\setminus B]{\mathcal{R}}\frac{|u|^{p-1}}{|x|^{Q+sp}_{\hn}}dx\Big)^{1+\varepsilon}dt\Bigg)^\frac{1}{1+\epsilon}\nonumber\\
    &\leq \frac{(\omega^{2-p}{(rR)}^\frac{sp}{2})^\frac{\varepsilon}{1+\varepsilon}}{(\omega^{2-p}R^{sp})^\frac{\varepsilon}{1+\varepsilon}}\Bigg(\fint_{-\mathcal{R}^{sp}}^0\Big(\mathcal{R}^{sp} \intb[\hn\setminus B]{\mathcal{R}}\frac{|u|^{p-1}}{|x|_{\hn}^{Q+sp}}dx\Big)^{1+\varepsilon}dt\Bigg)^\frac{1}{1+\varepsilon}\nonumber\\
    &\leq \omega\Big(\frac{r}{R}\Big)^\frac{\varepsilon sp}{2(1+\varepsilon)} 
\end{align}
Combining (\ref{eq6.2}) and (\ref{eq6.3}), we have the desired result.

\end{document}